\newcommand{\R}{\mathbb{R}}
\newcommand{\Z}{\mathbb{Z}}
\newcommand{\Q}{\mathbb{Q}}
\newcommand{\F}{\mathbb{F}}
\newcommand{\C}{\mathbb C}
\newcommand{\T}{\mathbb T}
\newcommand{\dd}{\mathrm{d}}
\newcommand{\xist}{\xi_{\text{st}}}
\renewcommand{\geq}{\geqslant}
\newtheorem{teo}{Theorem}[section]
\newtheorem*{teo*}{Theorem}
\newtheorem{lemma}[teo]{Lemma}
\newtheorem{prop}[teo]{Proposition}
\newtheorem*{prop*}{Proposition}
\newtheorem{cor}[teo]{Corollary}
\newtheorem{conj}[teo]{Conjecture}
\theoremstyle{definition}
\newtheorem{remark}[teo]{Remark}
\pgfplotsset{compat=1.13}
\begin{document}
\title{An invariant of Legendrian and transverse links from open book decompositions of contact 3-manifolds}
\author{\scshape{Alberto Cavallo}\\ \\
 \footnotesize{Alfr\'ed R\'enyi Institute of Mathematics,}\\
 \footnotesize{Budapest 1053, Hungary}\\ \\ \small{cavallo\_alberto@phd.ceu.edu}}
\date{}

\maketitle

\begin{abstract}
 We introduce a generalization of the Lisca-Ozsv\'ath-Stipsicz-Szab\'o
 Legendrian invariant $\mathfrak L$ to links in every rational homology sphere, using
 the collapsed version of link Floer homology.
 
 We represent a Legendrian link $L$ in a contact 3-manifold $(M,\xi)$ with a diagram $D$, given by an open book decomposition 
 of $(M,\xi)$ adapted to $L$, and we construct a chain 
 complex $cCFL^-(D)$ with a special cycle in it denoted by $\mathfrak L(D)$. Then, given two 
 diagrams $D_1$ and $D_2$ which represent
 Legendrian isotopic links, we prove that there is a map between the corresponding
 chain complexes, that induces an isomorphism in homology and sends $\mathfrak L(D_1)$ into $\mathfrak L(D_2)$.
 
 Moreover, a connected sum formula is also
 proved and we use it to give some applications about non-loose Legendrian links; that are
 links such that the restriction of $\xi$ on their complement is tight.  
\end{abstract}
\section{Introduction}
The Legendrian invariant $\mathfrak L$ was first introduced in \cite{LOSS}. In this paper we generalize $\mathfrak L$ to
Legendrian links in rational homology contact 3-spheres. For sake of simplicity, we consider only null-homologous links,
which in this settings are links whose all components represent trivial classes in homology.

A \emph{contact structure} $\xi$ on a differentiable
3-manifold $M$ is a smooth 2-plane field that is given as the kernel of a 1-form $\alpha$ such that
$\alpha\wedge\dd\alpha>0$.
A \emph{Legendrian $n$-component link} $L\hookrightarrow(M,\xi)$ is a link such that the tangent space $T_pL$ 
is contained in $\xi_p$ for every point $p$. A link $T\hookrightarrow(M,\xi)$ is \emph{transverse} if
$T_pT\oplus\xi_p=T_pM$ for every $p\in T$ and $\alpha>0$ on $T$.
A \emph{Legendrian isotopy} between $L$ and $L'$ 
is a smooth isotopy $F_t$ of $M$, where $t\in[0,1]$, such that $F_0(L)=L$, $F_1(L)=L'$ and $F_t(L)$ is Legendrian
for every $t$.

A 3-manifold $M$ can be represented with an open book decomposition $(B,\pi)$.
Suppose that $L$ is a Legendrian $n$-component \emph{oriented} link in $M$, equipped with a contact structure $\xi$.
When some compatibility conditions
with $\xi$ and $L$, which are explained in Section \ref{section:two}, are satisfied $(B,\pi)$, 
together with an appropriate set $A$ of 
embedded arcs in the page $S_1$, is said to be adapted to the Legendrian link $L$. In particular, we prove the
following theorem.
\begin{teo}
  \label{teo:reduced}
  Every Legendrian oriented link $L$ in a contact $3$-manifold $(M,\xi)$ admits an adapted open book decomposition $(B,\pi,A)$
  which is compatible with
  the triple $(L,M,\xi)$. 
  Moreover, the contact framing of $L$ coincides with the framing induced on 
  $L$ by the page $S_1$.
\end{teo}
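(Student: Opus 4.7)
The plan is to start from Giroux's correspondence, which yields an open book decomposition $(B_0,\pi_0)$ of $M$ supporting $\xi$; this is a canonical starting point, but the link $L$ is not yet visible on any page. The first step is to arrange the components of $L$ one at a time so that, after finitely many positive stabilizations of the open book and a Legendrian isotopy, the whole link sits on a single page $S_1$. For a single Legendrian knot this placement statement is classical (Akbulut--Ozbagci, Etnyre--Honda); its extension to a multi-component link is carried out by induction on components, taking care that each new positive stabilization can be localized in a neighborhood of the binding, disjoint from the components already lying on the page, and that the accompanying Legendrian isotopy avoids them.

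Once $L\subset S_1$, the arc system $A$ is produced by representing each component $L_i$ through arcs in $S_1$ with endpoints on $B$: since the page has boundary $B$ and $L$ is null-homologous, $L_i$ decomposes into such arcs in a natural way. If the notion of \emph{adapted} in Section \ref{section:two} requires that $A$ extend to a cut system of $S_1$, one completes the collection by standard arguments. The resulting triple $(B,\pi,A)$ is then compatible with $(L,M,\xi)$ by construction.

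For the framing statement, the key fact is that if a Legendrian knot sits on a convex surface in a contact $3$-manifold then its contact framing agrees with the surface framing. The pages of a supporting open book can be taken to be convex (indeed, near each page the contact form $\alpha$ can be put into a normal form making $\dd\alpha$ a positive area form on the page), so applying this observation componentwise to $L\subset S_1$ shows that the contact framing of $L$ coincides with the framing induced by $S_1$.

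The principal obstacle is the inductive placement of all components on a common page while preserving the compatibility of the open book with $\xi$: one must verify that the positive stabilizations needed to accommodate later components can be chosen so as not to disturb earlier ones, and that the Legendrian isotopies bringing each new component onto the page can be arranged to leave the rest fixed. The extraction of the arc system $A$ and the identification of the two framings are, by contrast, essentially formal once the link lies on a page of a supporting open book.
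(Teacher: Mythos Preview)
Your high-level strategy for placing $L$ on a page---start from a Giroux open book and positively stabilize component by component, \`a la Akbulut--Ozbagci---is a legitimate alternative to the paper's route through contact cell decompositions and ribbons. But there are two genuine gaps in what follows.

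\textbf{The arc system.} You have misread what $A$ is. The arcs in $A$ are \emph{not} pieces of $L$ with endpoints on the binding; $L$ is a collection of closed curves lying in the interior of $S_1$ and never touches $B$. Rather, $A=\{a_1,\dots,a_{2g+l+n-2}\}$ is a system of properly embedded arcs in $S_1$, disjoint from each other, such that the distinguished arcs $a_1,\dots,a_n$ each meet $L$ transversely in a single point (one per component), the remaining arcs miss $L$ entirely, and the whole collection satisfies the homological and separation conditions of Subsection~\ref{subsection:reduced}. Constructing such a system is the actual content of the proof once $L$ sits on the page; the paper does it by first producing the distinguished arcs (dual to the $L_i$), then completing to a basis of $H_1(S_1,\partial S_1;\F)$ via Alexander duality, and finally adding the extra separating arcs by pushing off each $L_i$ along its distinguished arc. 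Your sentence ``$L_i$ decomposes into such arcs'' and the appeal to $L$ being null-homologous in $M$ are simply not relevant to this construction. Relatedly, the definition requires the components of $L$ to be \emph{independent} in $H_1(S_1;\F)$; your stabilization procedure does not address this, and in practice one must stabilize further (or argue as the paper does from the ribbon picture) to guarantee it.

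\textbf{The framing.} The claim ``if a Legendrian knot sits on a convex surface then its contact framing agrees with the surface framing'' is false without qualification: for a Legendrian $L$ on a convex surface with dividing set $\Gamma$, the twisting of $\xi$ relative to the surface along $L$ is $-\tfrac{1}{2}\#(L\cap\Gamma)$, which need not vanish. The paper sidesteps this entirely: since $S_1$ is taken to be the ribbon of a Legendrian $1$-skeleton containing $L$, one has $T_pS_1=\xi_p$ for every $p\in L$ by the definition of a ribbon, so the page framing and contact framing coincide tautologically. If you want to keep your stabilization approach, you should instead invoke the standard fact (proved e.g.\ in Etnyre's lectures via a local model, or as part of the Akbulut--Ozbagci construction) that a Legendrian placed on a page of a supporting open book always has page framing equal to contact framing; but this is a separate lemma, not a consequence of convexity.
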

Suppose from now on that $M$ is a rational homology sphere.
A \emph{multi-pointed Heegaard diagram} is 
defined in \cite{Ozsvath2} as two collections of $g+n-1$ curves
and two sets of $n$ basepoints in a genus $g$ surface $\Sigma$ which describe an oriented link in $M$. 
As shown in Section \ref{section:three}, 
one can associate, up to isotopy, a diagram of this kind to every adapted open book decomposition, compatible with the triple
$(L,M,\xi)$. This diagram is called a Legendrian Heegaard diagram and it is denoted with $D_{(B,\pi,A)}$.
The surface $\Sigma$ in $D_{(B,\pi,A)}$ is obtained by gluing the pages $S_1$ and $\overline{S_{-1}}$ together, moreover
all the basepoints are contained in $S_1$.

In Heegaard Floer theory the diagram $D_{(B,\pi,A)}$ gives a bigraded 
chain complex $\left(cCFL^-(D_{(B,\pi,A)}),\partial^-\right)$,
generated by some discrete subsets of points in $\Sigma$ \cite{Ozsvath} and 
whose homology is an $\F[U]$-module, where $\F$ is the field
with two elements, called \emph{collapsed link Floer homology}. The isomorphism type of such a group is a smooth link invariant 
and it is denoted by $cHFL^-\left(\overline M,L\right)$, see \cite{Book,Ozsvath2}.
Furthermore, there is only one cycle in $cCFL^-\left(D_{(B,\pi,A)}\right)$, see \cite{Matic,LOSS}, 
that lies on the page $S_1$: this cycle is denoted by $\mathfrak L\left(D_{(B,\pi,A)}\right)$. 
\begin{teo}
 \label{teo:main}
 Let us consider a Legendrian Heegaard diagram $D$, given by an open book compatible with a triple $(L,M,\xi)$, where $M$ is a 
 rational homology $3$-sphere and $L$
 is a null-homologous Legendrian $n$-component oriented link. 
 
 Let us
 take the cycle $\mathfrak L(D)\in cCFL^-(D,\mathfrak t_{\mathfrak L(D)})$; if $D_1$ and $D_2$ are diagrams as before for Legendrian isotopic links then we can find a map
 \[cCFL^-(D_1,\mathfrak t_{\mathfrak L(D_1)})\longrightarrow cCFL^-(D_2,\mathfrak t_{\mathfrak L(D_2)})\quad\text{ such that }\quad\mathfrak L(D_1)\longrightarrow\mathfrak L(D_2)\] and inducing a bigraded isomorphism in homology.
 Furthermore, the $\text{Spin}^c$ structure $\mathfrak t_{\mathfrak L(D)}$ coincides with $\mathfrak t_{\xi}$.
\end{teo}
As in \cite{LOSS}, we introduce an equivalence relation on the family of bigraded $\F[U]$-modules with a distinguished element in them. Namely, we say that $(M,m)\sim(N,n)$ if and only if there is a bigraded isomorphism $M\rightarrow N$ such that $m\mapsto n$.
Hence, Theorem \ref{teo:main} tells us that the equivalence class of the pair \[(cHFL^-\left(\overline M,L,\mathfrak t_{\xi}\right),\:[\mathfrak L(D)])\] with respect to $\sim$ is a Legendrian invariant of $(L,M,\xi)$; and we denote it with $\mathfrak L(L,M,\xi)$.

In \cite{Baldwin} Baldwin, Vela-Vick and V\'ertesi, using a different construction, introduced another invariant of Legendrian links in contact 3-manifolds 
which generalizes $\mathfrak L$ in the case of
knots in the standard 3-sphere. The same argument in \cite{Baldwin} implies that this invariant coincides with our
$\mathfrak L$ for every Legendrian link in $(S^3,\xi_{\text{st}})$.
\begin{conj}
 The invariant $\mathfrak L$ concides with the invariant from \emph{\cite{Baldwin}} for every null-homologous link in a rational homology $3$-sphere.
\end{conj}
\begin{remark}
 \label{remark:1}
 It is important to observe that we cannot define $\mathfrak L$ just as a fixed element in $cHFL^-$ without a naturality property, similar to the one given by Juh\'asz, Thurston and Zemke in \cite{JTZ}. In fact, even for Legendrian isotopic links the homology groups have different presentations, according to the choice of the corresponding Legendrian Heegaard diagrams: this fact justifies the statement of Theorem \ref{teo:main} and the definition of $\mathfrak L$. 
 Nonetheless, in the rest of the paper, for the sake of simplicity, we will sometimes reduce the formalism and write $\mathfrak L(L,M,\xi)$ for the homology class $[\mathfrak L(D)]$ in $cHFL^-\left(\overline M,L,\mathfrak t_{\xi}\right)$.
 
 Clearly, it is very difficult to claim that two Legendrian links have different invariant $\mathfrak L$ in general; nonetheless, we can still prove that $\mathfrak L$ has some properties which do not involve naturality: for example its bigrading and $U$-torsion order in $cHFL^-$ and the behaviour under connected sums. Such properties are all discussed in detail in the paper. 
 
 In addition, we note that in \cite{JMZ} Juh\'asz, Miller and Zemke showed that the transverse link invariant introduced in \cite{Baldwin} has indeed a naturality property, but we recall that such an invariant is known to coincide with $\mathfrak L$ only in $(S^3,\xist)$.
\end{remark}
The smooth link type is clearly a Legendrian invariant. 
Together with the Thurston-Bennequin and the rotation numbers are called classical
invariant of a Legendrian link.
The \emph{Thurston-Bennequin} and \emph{rotation numbers} of a null-homologous link $L$ in a rational homology contact 3-sphere $(M,\xi)$
are defined as follows. The first one, denoted with $\text{tb}(L)$, is the linking number of the contact framing $\xi_L$
of $L$ respect to $\xi$ and a Seifert surface for $L$.
While, the second one, denoted with $\text{rot}(L)$, 
is defined as the numerical obstruction to extending a non-zero vector field, everywhere 
tangent to the knot, to a Seifert surface of $K$ (see \cite{Etnyre4}).

If in $(M,\xi)$ there is an embedded disk $E$, with Legendrian boundary, such that 
$\text{tb}(\partial E)=0$ then the structure 
$\xi$ is \emph{overtwisted}; otherwise it is called \emph{tight}. 
Furthermore, in overtwisted structures there are two types of Legendrian and transverse  
links: \emph{non-loose} if their complement is tight and
\emph{loose} if it is overtwisted. 
We have the following proposition.
\begin{prop}
 \label{prop:loose}
 Let $L$ be a loose Legendrian link in an overtwisted contact $3$-manifold $(M,\xi)$. Then we have that 
 $\mathfrak L(L,M,\xi)=[0]$.
\end{prop}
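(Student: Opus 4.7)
The plan is to reduce the vanishing of $\mathfrak L(L,M,\xi)$ for loose links to the vanishing of $\mathfrak L$ on a negative Legendrian stabilization. Since $L$ is loose, by definition the restriction of $\xi$ to $M\setminus L$ is overtwisted, so there is an overtwisted disk $\Delta$ embedded in $M$ and disjoint from $L$. A classical move in contact topology then allows us to push a small arc of a chosen component of $L$ across a neighborhood of $\Delta$ and produce a Legendrian isotopy
\[
  L\;\sim\;S^-(L),
\]
where $S^-$ denotes the negative Legendrian stabilization along that component: heuristically, the extra cusp created by the stabilization is absorbed by $\Delta$. By Theorem \ref{teo:main}, it suffices to exhibit a Legendrian Heegaard diagram $D$ adapted to $S^-(L)$ for which $[\mathfrak L(D)]=0$ in $cHFL^-(\overline M,L,\mathfrak t_\xi)$.

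To carry this out I would start from an adapted open book $(B,\pi,A)$ for $L$ given by Theorem \ref{teo:reduced} and modify it into an adapted open book $(B,\pi,A')$ for $S^-(L)$, where $A'=A\cup\{a\}$ and the additional arc $a$ encodes the stabilization on the page $S_1$. In the resulting diagram I expect a distinguished generator $y$, lying just off $S_1$ near $a$, whose only contribution to the differential is a small bigonal region bounded by one $\alpha$- and one $\beta$-curve created by $a$, with both the $\mathbb X$- and $\mathbb O$-basepoints avoiding this region. A direct count then gives $\partial^- y=\mathfrak L(D)$ without any $U$-factor, exhibiting $\mathfrak L(D)$ as a boundary.

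The hard part will be the first step, namely realising the isotopy $L\sim S^-(L)$ at the level of the compatible data of Theorem \ref{teo:reduced}, for a multi-component link in a general rational homology sphere. One must combine the contact-topological argument for absorbing a cusp through $\Delta$ with the open-book-plus-arc-set stabilization procedure of Section \ref{section:two}, and verify that the arc $a$ inserted in this process produces exactly the bigonal local model required for the disk computation of the second step. Once this model is pinned down, the bigon count and the verification that no basepoint enters the relevant domain are essentially local and follow the template of the corresponding stabilization computation in \cite{LOSS}. Combining both steps with Theorem \ref{teo:main} yields $\mathfrak L(L,M,\xi)=[0]$.
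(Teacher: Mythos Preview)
Your Step~1 cannot work as stated. The Thurston--Bennequin number is a Legendrian isotopy invariant, and $\tb(S^-(L))=\tb(L)-1$; hence no Legendrian isotopy $L\sim S^-(L)$ exists, whether or not there is an overtwisted disk in the complement. The flexibility granted by an overtwisted disk (e.g.\ Dymara's coarse classification of loose Legendrians) never allows a link to absorb a single stabilization, of either sign, by Legendrian isotopy.

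Step~2 is inconsistent with Proposition~\ref{prop:stabilizations}, which says $\mathfrak L(L^-,M,\xi)=\mathfrak L(L,M,\xi)$ for \emph{every} Legendrian link $L$. Thus there is no adapted diagram for $S^-(L)$ in which the bigon you describe can exist: if your local model produced $\partial^- y=\mathfrak L(D)$ with no $U$-power, it would force $\mathfrak L\equiv 0$ universally. Even switching to the positive stabilization does not help, since $\mathfrak L(L^+,M,\xi)=U\cdot\mathfrak L(L,M,\xi)$, which is again not a boundary in general.

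The paper's proof proceeds by a different mechanism. The overtwisted disk disjoint from $L$ is enclosed in a ball with standard convex boundary, which realizes $(M,\xi)$ as a contact connected sum $(M,\xi_1)\#(S^3,\xi')$ with $\xi'$ overtwisted and $L$ lying entirely in the $(M,\xi_1)$ summand. The connected sum formula of Theorem~\ref{teo:connected_sum} then gives
\[
\mathfrak L(L,M,\xi)=\mathfrak L(L,M,\xi_1)\otimes\mathfrak L(\mathcal O,S^3,\xi')\:.
\]
The second tensor factor lives in $cHFK^-(\bigcirc)\cong\F[U]$, which is torsion-free; but by Corollary~\ref{cor:vanish} it must be torsion since $\xi'$ is overtwisted. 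Hence $\mathfrak L(\mathcal O,S^3,\xi')=0$ and the product vanishes.
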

We use the invariant $\mathfrak L$ to prove the existence of non-loose Legendrian $n$-component links, with loose 
components, in many 
overtwisted contact 3-manifolds.
\begin{teo}
 \label{teo:nonloose}
 Suppose that $(M,\xi)$ is an overtwisted $3$-manifold such that there exists a contact structure $\zeta$ with 
 $\mathfrak t_{\xi}=\mathfrak t_{\zeta}$ and $\widehat c(M,\zeta)\neq[0]$.
 Then there is a non-split Legendrian $n$-component
 link $L$, for every $n\geq 1$, such that $\mathfrak L(L,M,\xi)$ is non-zero and all of its sub-links are loose. 
 In particular, $L$ is non-loose and stays non-loose after a sequence of negative stabilizations.
 
 Futhermore, the transverse link $T_L$, obtained as transverse push-off of $L$, is again non-split and $\mathfrak T(T_L,M,\xi):=\mathfrak L(L,M,\xi)$
 is non-zero, which means that $T_L$ is also non-loose. 
\end{teo}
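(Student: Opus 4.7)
The plan is to use the existence of $\zeta$ to first produce a Legendrian knot in $(M,\xi)$ with nonzero $\mathfrak L$, then enlarge it to an $n$-component link via the connected sum formula from the abstract, arranging every component to be individually loose while keeping the link as a whole non-loose.

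I would begin by choosing a Legendrian knot $K_\zeta\subset(M,\zeta)$ with $\mathfrak L(K_\zeta,M,\zeta)\neq[0]$. Since $\widehat c(M,\zeta)\neq[0]$ and $\widehat c$ is realized as the image of a Legendrian invariant under the natural projection $cHFL^-\to\widehat{HF}$, such a knot can be extracted from any open book adapted to $\zeta$; a convenient choice is a Legendrian realization of a page-pushed-off binding arc. Because $\mathfrak t_\xi=\mathfrak t_\zeta$, the target group $cHFL^-(\overline M,K_\zeta,\mathfrak t_\xi)$ depends only on the smooth data and agrees with its $\zeta$-counterpart. I would then modify the monodromy of the adapted open book $(B,\pi,A)$ inside a region of the page $S_1$ disjoint from the arcs of $A$, so that the new contact structure is isotopic to the prescribed overtwisted $\xi$ (possible by Eliashberg's classification once sufficient overtwistedness is introduced with $\mathfrak t$ preserved). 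Since the modification avoids the arcs computing $\mathfrak L$, the generator on the resulting Heegaard surface is unchanged, and the corresponding Legendrian knot $K\subset(M,\xi)$ inherits $\mathfrak L(K,M,\xi)\neq[0]$.

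To build $L$, I would take the contact connected sum of $K$ with $n-1$ Legendrian unknots, each placed inside its own small overtwisted Darboux ball in the complement of $K$. By the connected sum formula, $\mathfrak L(L,M,\xi)$ factors as a tensor product of $\mathfrak L(K,M,\xi)$ with the invariants of the unknots, and choosing each unknot to have nonzero $\mathfrak L$ (a maximal-Thurston-Bennequin Legendrian unknot in a small tight ball inside the overtwisted ball, summed appropriately) forces $\mathfrak L(L,M,\xi)\neq[0]$. Each attached unknot component is loose by construction, while the $K$-component can be made loose by a single negative stabilization, which preserves $\mathfrak L$. Proposition \ref{prop:loose} then yields that $L$ itself is non-loose, and the property persists under further negative stabilizations. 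Finally, $\mathfrak T(T_L,M,\xi):=\mathfrak L(L,M,\xi)\neq[0]$ delivers non-looseness of the transverse push-off via the transverse analogue of Proposition \ref{prop:loose}.

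The main obstacle is the monodromy modification in the second paragraph: one must realize the specified overtwisted $\xi$ (not merely some overtwisted structure with the correct $\Spin^c$ class) while leaving the arc system $A$ entirely undisturbed, and must verify that the extra stabilizations making the $K$-component loose do not move the generator out of its nonzero homology class. Once these choices are in place, the connected-sum, stabilization, and transverse-push-off conclusions follow mechanically from the invariance results and formulas stated earlier in the paper.
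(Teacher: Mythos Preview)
Your approach has two genuine gaps. First, the monodromy modification in your second paragraph cannot work as stated. The arc system $A$ (together with the $n-1$ extra separating arcs) cuts the page $S_1$ into disks, so any simple closed curve in $S_1$ disjoint from all of $A$ is null-homotopic or boundary-parallel, and a Dehn twist along it is isotopic to the identity. Thus a monodromy change supported away from $A$ is trivial and cannot turn $\zeta$ into the prescribed overtwisted $\xi$. Even ignoring this, changing $\Phi$ alters the $\alpha$-curves $b_i\cup(h\circ\Phi)(b_i)$ and hence the entire chain complex; the persistence of the intersection point on $S_1$ says nothing about its homology class in the new complex.

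Second, your construction of the $n$-component link does not produce what you need. A connected sum of the knot $K$ with unknots is again a knot, so you do not gain components; if you mean instead a disjoint union with unknots in overtwisted balls, the result is split (contradicting the statement) and its invariant vanishes. The assertion that a single negative stabilization makes the $K$-component loose is also unjustified. The paper sidesteps all of this: it takes the chain of positive Hopf links $H_+^{n-1}$ inside $(M,\zeta)$ to supply the extra components non-splitly, with $\mathfrak L(H_+^{n-1},M,\zeta)$ sitting at the top of an $\F[U]$-tower; it then connect-sums, on two different components, explicit non-loose knots $L(1)$ and $K_{d_3(\xi)-d_3(\zeta)+1}$ living in overtwisted $3$-spheres (with nonzero torsion invariants, taken from \cite{LOSS}), chosen so that the resulting ambient manifold has the correct $d_3$ and $\Spin^c$ data and is hence $(M,\xi)$ by Eliashberg's classification. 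Each component then lies in one of the overtwisted $S^3$ summands and is automatically loose, while the tensor product of a top-of-tower class with nonzero torsion classes remains nonzero.
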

Furthermore, we give examples of \emph{non-loose, non-simple Legendrian and transverse link types}.
Here non-simple means that there 
exists a pair of Legendrian (transverse)
links that have the same classical invariants and their components are Legendrian (transverse)
isotopic knots, but they are not Legendrian (transverse) isotopic (as links). Moreover, with non-loose we mean that such a 
pair consists of two non-loose Legendrian (transverse) links.
\begin{teo}
 \label{teo:last}
 Suppose that $(M,\xi)$ is an overtwisted $3$-manifold as in Theorem \ref{teo:nonloose}.
 Then in $(M,\xi)$ there is a pair of non-loose,
 non-split Legendrian (transverse) $n$-component links, with the same classical invariants and Legendrian
 (transversely) isotopic components, but that are not Legendrian (transversely) isotopic.
\end{teo}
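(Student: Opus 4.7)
The plan is to construct the required pair by modifying, through a Legendrian connected sum, the non-loose link furnished by Theorem~\ref{teo:nonloose}. Fix $n \geq 2$ (for $n=1$ the hypothesis ``Legendrian isotopic components'' already forces the two single-component links to be isotopic) and let $L = K_1 \cup \dots \cup K_n \subset (M,\xi)$ denote the non-loose, non-split $n$-component Legendrian link produced by Theorem~\ref{teo:nonloose}, so $\mathfrak L(L,M,\xi) \neq [0]$ and every $K_i$ is loose. Separately, pick a pair $C_1, C_2 \subset (S^3,\xi_{\text{st}})$ of Legendrian knots of the same smooth type, with equal $\tb$ and $\rot$ but distinct LOSS invariants $\mathfrak L(C_1) \neq \mathfrak L(C_2)$; standard examples of such ``Chekanov pairs'' are known, for instance among Legendrian representatives of $5_2$.

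Define the candidate pair by
\[
 L^{(i)} := (K_1 \# C_i) \cup K_2 \cup \dots \cup K_n, \qquad i = 1, 2,
\]
with the Legendrian connected sum $\#$ performed inside a Darboux ball that meets only the component $K_1$ and is disjoint from an overtwisted disk in $M \setminus K_1$. Several properties are then routine: each $L^{(i)}$ is non-split because $L$ is and the modification is local; the pairwise linking numbers among the components are unchanged; the equalities $\tb(C_1) = \tb(C_2)$, $\rot(C_1) = \rot(C_2)$, together with additivity of $\tb$ and $\rot$ under Legendrian connected sum, show that $L^{(1)}$ and $L^{(2)}$ share the same classical invariants. Moreover, the chosen overtwisted disk survives in the complement of $K_1 \# C_i$, so this component is itself loose; then by the classification of loose Legendrian knots in overtwisted contact $3$-manifolds (Eliashberg--Fraser, Etnyre, Dymara), which asserts that two loose Legendrian knots of the same smooth type with matching $\tb$ and $\rot$ are Legendrian isotopic, $K_1 \# C_1$ and $K_1 \# C_2$ are Legendrian isotopic; the other components agree verbatim.

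The decisive step is to distinguish $L^{(1)}$ and $L^{(2)}$ as Legendrian links. Applying the connected sum formula announced in the abstract yields
\[
 \mathfrak L(L^{(i)}, M, \xi) \;=\; \mathfrak L(L, M, \xi) \otimes \mathfrak L(C_i, S^3, \xi_{\text{st}}),
\]
and since $\mathfrak L(L,M,\xi)$ is non-zero, the map $x \mapsto \mathfrak L(L,M,\xi) \otimes x$ is injective on the relevant $\F[U]$-module, so the two right-hand sides differ. In particular both are non-zero, which by Proposition~\ref{prop:loose} forces $L^{(i)}$ to be non-loose, and Theorem~\ref{teo:main} then separates $L^{(1)}$ from $L^{(2)}$ as Legendrian links. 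The transverse case is obtained by taking the transverse push-offs $T^{(i)}$ of $L^{(i)}$: by the definition $\mathfrak T(T^{(i)}) := \mathfrak L(L^{(i)})$ the two transverse invariants remain distinct, the self-linking number is a function of $\tb$ and $\rot$ so classical transverse invariants coincide, and the component-wise transverse isotopy descends from the Legendrian one via push-off functoriality.

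The main obstacle is the injectivity step used above: one must verify that a non-zero $\mathfrak L$-class in $cHFL^-(\overline M, L, \mathfrak t_\xi)$ does not collapse the distinction between $\mathfrak L(C_1)$ and $\mathfrak L(C_2)$ under the module pairing, which is precisely what the module-theoretic form of the connected sum formula is needed for. A secondary subtlety is matching the $\text{Spin}^c$ and grading conventions of the Chekanov pair inside $(S^3,\xi_{\text{st}})$ with those of $\mathfrak L(L,M,\xi)$ so that the tensor product makes literal sense; this is classical bookkeeping, but worth spelling out explicitly.
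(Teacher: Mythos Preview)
Your approach differs from the paper's, and the step you flag as ``the main obstacle'' is in fact a genuine gap that cannot be closed as you suggest. Since $(M,\xi)$ is overtwisted, Corollary~\ref{cor:vanish} forces $\mathfrak L(L,M,\xi)$ to be a $U$-torsion class. For any torsion element $a$ in an $\F[U]$-module $A$, the map $x\mapsto a\otimes x$ from $B$ to $A\otimes_{\F[U]}B$ is \emph{never} injective once $B$ has non-trivial $U$-action: if $U^k a=0$ then every $x\in U^kB$ is killed. So the sentence ``since $\mathfrak L(L,M,\xi)$ is non-zero, the map $x\mapsto\mathfrak L(L,M,\xi)\otimes x$ is injective'' is simply false in this setting, not merely unverified. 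You would need instead the weaker statement that $\mathfrak L(L,M,\xi)\otimes\mathfrak L(C_1)$ and $\mathfrak L(L,M,\xi)\otimes\mathfrak L(C_2)$ lie in distinct automorphism orbits of the tensor product, and nothing in your argument establishes this.

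There is a secondary trap. The cleanest way to push a distinction through the tensor product is to choose a Chekanov pair with $\widehat{\mathfrak L}(C_1)=0$ and $\widehat{\mathfrak L}(C_2)\neq 0$ and work in the hat version over $\F$. But then $\widehat{\mathfrak L}(L^{(1)},M,\xi)=0$ as well, and you lose the only tool you had to certify that $L^{(1)}$ is non-loose; the theorem demands that \emph{both} links be non-loose. The paper avoids both difficulties by using a different mechanism: it builds the two links from the pairs $L_{0,2}\#L_{1,2}$ versus $L_{1,1}\#L_{0,3}$ (summed with $H_+$ and a link furnished by Theorem~\ref{teo:nonloose}) and distinguishes them via the Alexander-pair refinement of Theorem~\ref{teo:refinement}, which records the Alexander gradings of the two factors across a splitting sphere. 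That refinement survives because it is a grading datum rather than a module element, so torsion in the ambient module is irrelevant; and both hat invariants remain non-zero, so non-looseness of both links is retained.
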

This paper is organized as follows. In Section \ref{section:two} we define open book decompositions and we describe the
compatibility condition with Legendrian links. Moreover, we show that such adapted open books always exist.
In Section \ref{section:three} we give some remarks on link Floer homology. In Section \ref{section:four} we define 
$\mathfrak L$ and we prove its invariance under Legendrian isotopy. In Section \ref{section:five} we show some properties
of the invariant $\mathfrak L$, including the relations with the contact invariant $\widehat c$ and the classical 
invariants of Legendrian links. In Section \ref{section:six} we generalize the transverse invariant $\mathfrak T$ to links
and we describe some of its properties. Finally, in Section \ref{section:seven} we give some applications of our invariants.

\paragraph*{Acknowledgements:}
The author would like to thank Andr\'as Stipsicz for all the conversations about Legendrian invariants and Heegaard Floer
homology, which were vital for the birth of this paper, Stefan Friedl for helping to formalize the statement of Theorem \ref{teo:main}, and Irena Matkovi\v{c} for her help in understanding contact
topology.
In addition, the author wants to thank Peter Ozsv\'ath for giving me the opportunity to spend a semester at Princeton University,
where most of the content of this paper was developed. The author also thanks the anonymous referees for their suggestions. 

The author is supported by the ERC Grant LDTBud from the
Alfr\'ed R\'enyi Institute of Mathematics and a Full Tuition Waiver for a Doctoral program
at Central European University.

\section{Open book decompositions adapted to a Legendrian link}
\label{section:two}
\subsection{Definition}
\label{subsection:reduced}
Let us start with a contact 3-manifold $(M,\xi)$. We say that
an \emph{open book decomposition} for $M$ is a pair $(B,\pi)$ where
\begin{itemize}
   \item the binding $B$ is a smooth link in $M$;
   \item the map $\pi:M\setminus B\rightarrow S^1$ is a locally trivial fibration such that 
       $\overline{\pi^{-1}(\theta)}=S_{\theta}$ is a compact surface with 
       $\partial S_{\theta}=B$ for every $\theta\in S^1$. The surfaces $S_{\theta}$ are called \emph{pages} of the open book. 
\end{itemize}
Moreover, the pair $(B,\pi)$ supports $\xi$ if, up to isotopy, we can find a 1-form $\alpha$ for 
$\xi$ such that 
\begin{itemize}
  \item the 2-form $\dd\alpha$ is a positive area form on each page $S_{\theta}$;
  \item we have $\alpha>0$ on the binding $B$. 
\end{itemize}  
Let us take the page $S_1=\overline{\pi^{-1}(1)}$, which is an oriented, connected, compact surface of genus $g$ and with 
$l$ boundary components. Assume from now on that links are always oriented; suppose that an $n$-component Legendrian link $L$ in $(M,\xi)$ sits inside $S_1$ and its components
represent $n$ independent elements in $H_1(S_1;\F)$. 
We say that a collection of
disjoint, simple arcs $A=\{a_1,...,a_{2g+l+n-2}\}$ is a \emph{system of generators for $S_1$ adapted to $L$} if
the following conditions hold:
\begin{enumerate}
  \item the subset $A^1\sqcup A^2\subset A$, where $A^1=\{a_1,...,a_n\}$ and 
        $A^2=\{a_{n+1},...,a_{2g+l-1}\}$, is a basis of $H_1(S_1,\partial S_1;\F)\cong\F^{2g+l-1}$;
  \item we have that $L\pitchfork a_i=\{1\text{ pt}\}$ for every $a_i\in A^1$ and $L\cap a_i=\emptyset$ for $i>n$. 
        The arcs in the subset $A^1$ are called \emph{distinguished arcs};
  \item the subset $A^3=\{a_{2g+l},...,a_{2g+l+n-2}\}$ is such that 
        the surface $S$ given by the closure of $S_1\setminus A$ has $n$ disks as connected components, each one containing 
        exactly one component of $L$.
        
        The arcs in $A^2\sqcup A^3$ that appear twice on the same component of the boundary of $S$ are 
        called \emph{dead arcs}; the others \emph{living arcs};
  \item the elements in $A^3$ are all living arcs; moreover, they  
        separates a unique pair of components of the disk $S_1\setminus(A^1\sqcup A^2)$.
\end{enumerate}
With this definition in place we say that the triple $(B,\pi,A)$ is an open book decomposition \emph{adapted} to the Legendrian link 
$L$ if
\begin{itemize}
  \item the pair $(B,\pi)$ is compatible with $(M,\xi)$;
  \item the link $L$ is contained in the page $S_1$;
  \item the $n$ components of $L$ are independent in $S_1$;
  \item the set $A$ is a system of generators for $S_1$ adapted to $L$.
\end{itemize}
 In this case we
 also say that the adapted open book decomposition
 $(B,\pi,A)$ is compatible with the triple $(L,M,\xi)$. It is important to observe that, since the
 components of $L$ are required to be independent in homology, we only consider open book decompositions
 with pages not diffeomorphic to a disk.
 
\subsection{Existence}
 We need to prove that open book decompositions adapted to Legendrian links always exist. In order to do this we recall the 
 definition of contact cell decomposition
 (of a contact 3-manifold) and ribbon of a Legendrian graph. 
 A \emph{contact cell decomposition} of $(M,\xi)$ is a finite CW-decomposition of $M$ such that
 \begin{enumerate}
  \item the 1-skeleton is a connected Legendrian graph;
  \item each 2-cell $E$ satisfies $\text{tb}(\partial E)=-1$;
  \item the contact structure $\xi$ is tight when restricted to each 3-cell.
 \end{enumerate}
 Moreover, if we have a Legendrian link 
 $L\hookrightarrow (M,\xi)$ then we also suppose that 
 \begin{enumerate}
  \setcounter{enumi}{3}
  \item the 1-skeleton contains $L$.
 \end{enumerate}
Denote the 1-skeleton of a contact cell decomposition of $(M,\xi)$ with $G$. 
Then $G$ is a Legendrian graph and its \emph{ribbon} is a compact surface
$S_G$ satisfying:
\begin{itemize}
  \item $S_G$ retracts onto $G$;
  \item $T_pS_G=\xi_p$ for every $p\in G$;
  \item $T_pS_G\neq\xi_p$ for every $p\in S\setminus G$.
\end{itemize}
We say that an adapted open book decomposition $(B,\pi,A)$, compatible with the triple $(L,M,\xi)$, comes from a contact cell 
decomposition if $S=\overline{\pi^{-1}(1)}$ is a ribbon of the 1-skeleton of $(M,\xi)$.
\begin{proof}[Proof of Theorem \ref{teo:reduced}]
  Corollary 4.23 in \cite{Etnyre} assures us that we can always find an open book decomposition $(B,\pi)$ which comes from
  a contact cell decomposition of $(M,\xi)$
  \begin{figure}[ht]
  \centering
  \def\svgwidth{7cm}
  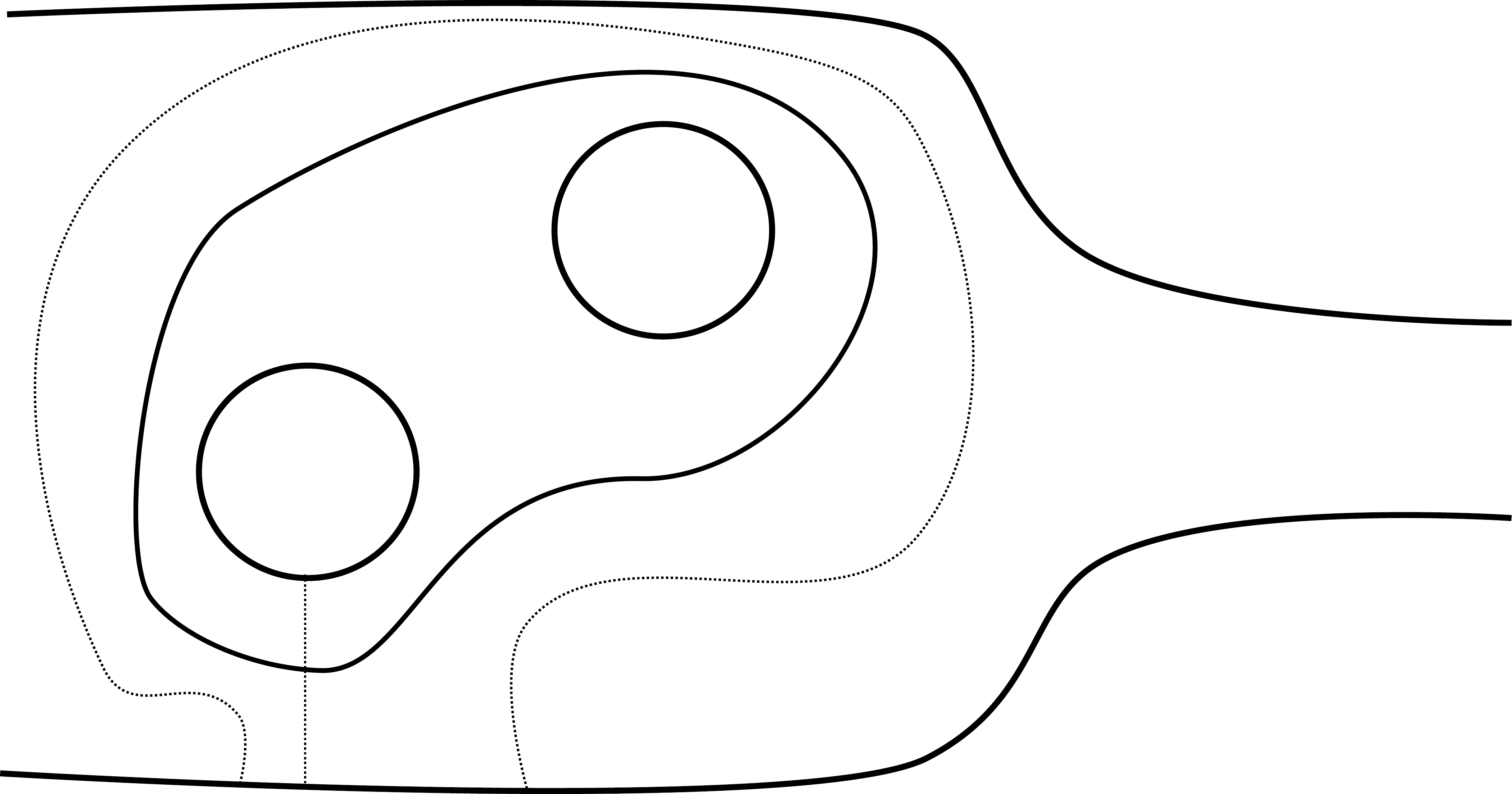   
  \caption{We add a new living arc which is parallel to $L_i$ except near the distinguished arc.}
  \label{Reduced}
  \end{figure}
  and the link $L$ is contained in $S_1$; where the page $S_1$ is precisely a ribbon of the 
  1-skeleton of $(M,\xi)$. The proof of this corollary also gives that the two framings of $L$ agree.
  
  The components of $L$ are independent
  because it is easy to see from the construction that there is a collection of disjoint, 
  properly embedded arcs $\{a_1,...,a_n\}$ in $S_1$ 
  such that 
  $$L_i\pitchfork a_i=\{1 \text{ pt}\}\:\:\:\:\text{ and }\:\:\:\:L_i\cap
  \left(\bigcup_{j\neq i} a_j\right)=\emptyset$$ for every $i$.   
  To conclude we only need to show that there exists a system of generators $A=\{a_1,...,a_{2g+l+n-2}\}$ for $S_1$
  which is adapted to $L$.  
  
  The arcs $a_1,...,a_n$ 
  are taken as before.
  If we complete $L$ to a basis of $H_1(S;\F)$ then Alexander duality gives a basis $\{a_1,...,a_{2g+l-1}\}$ with the same 
  property.
  We define $a_{2g+l},...,a_{2g+l+n-2}$ in the following way: each new living arc is parallel to $L_i$ and extended by following the distinguished arc until the boundary of $S_1$
  as in Figure \ref{Reduced}.
  Clearly, it disconnects the surface, because the first $2g+l-1$ arcs are already a basis of $H_1(S,\partial S;\F)$. If
  one of the components contains no 
  distinguished arcs, like in Figure \ref{Reduced1},
  then we choose the other endpoint of $a_i$ to extend the arc.
\end{proof}
In the following paper we use adapted open book decompositions to present Legendrian links in contact 3-manifolds.
 \begin{figure}[ht]
  \centering
  \def\svgwidth{7cm}
  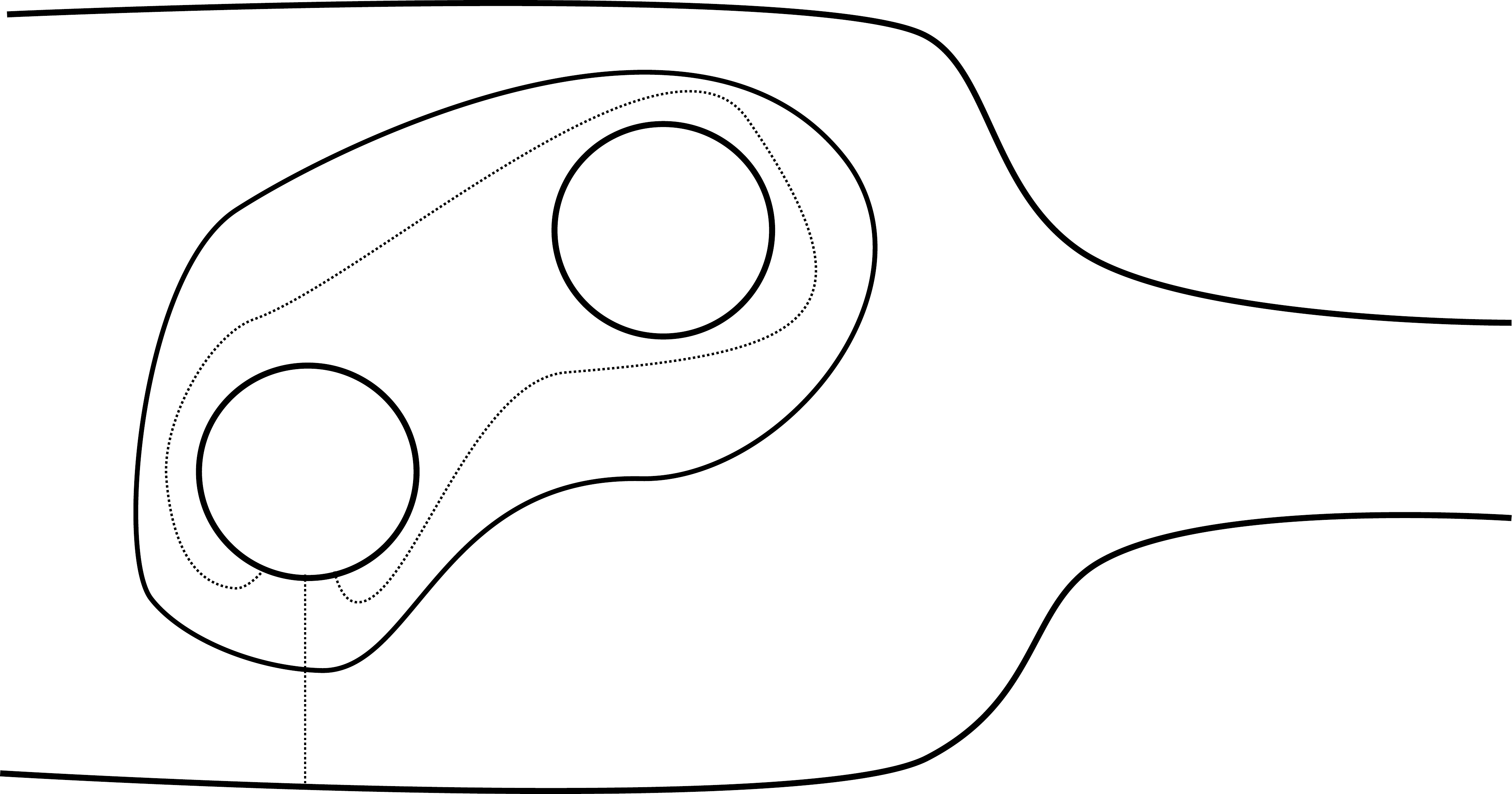   
  \caption{The picture appears similar to Figure \ref{Reduced}, but this time the 
                                                                     new arc follows the distinguished arc in the opposite direction.}
  \label{Reduced1}
  \end{figure}
Moreover, we study how to relate two different open book decompositions representing isotopic Legendrian links.

\subsection{Abstract open books}
\label{subsection:abstract}
An \emph{abstract open book} is a quintuple $(S,\Phi,\mathcal A,z,w)$ defined as follows.
We start with the pair $(S,\Phi)$. We have that $S=S_{g,l}$ is an oriented, connected, compact surface of genus $g$ and with 
$l$ boundary components, not diffeomorphic to a disk.
While $\Phi$ is the isotopy class of a diffeomorphism of $S$ into itself which is the identity on $\partial S$.
The class $\Phi$ is called \emph{monodromy}.
 
The pair $(S,\Phi)$ determines a contact 3-manifold up to contactomorphism.
The construction is described in \cite{Etnyre} Definition 2.3, Lemma 2.4
and Theorem 3.13.
 
The set $\mathcal A$ consists of two collections of properly embedded arcs, $B=\{b_1,...,b_{2g+l+n-2}\}$ and 
$C=\{c_1,...,c_{2g+l+n-2}\}$ in $S$ with $n\geq 1$,  
\begin{figure}[ht]
        \centering
        \def\svgwidth{7cm}
        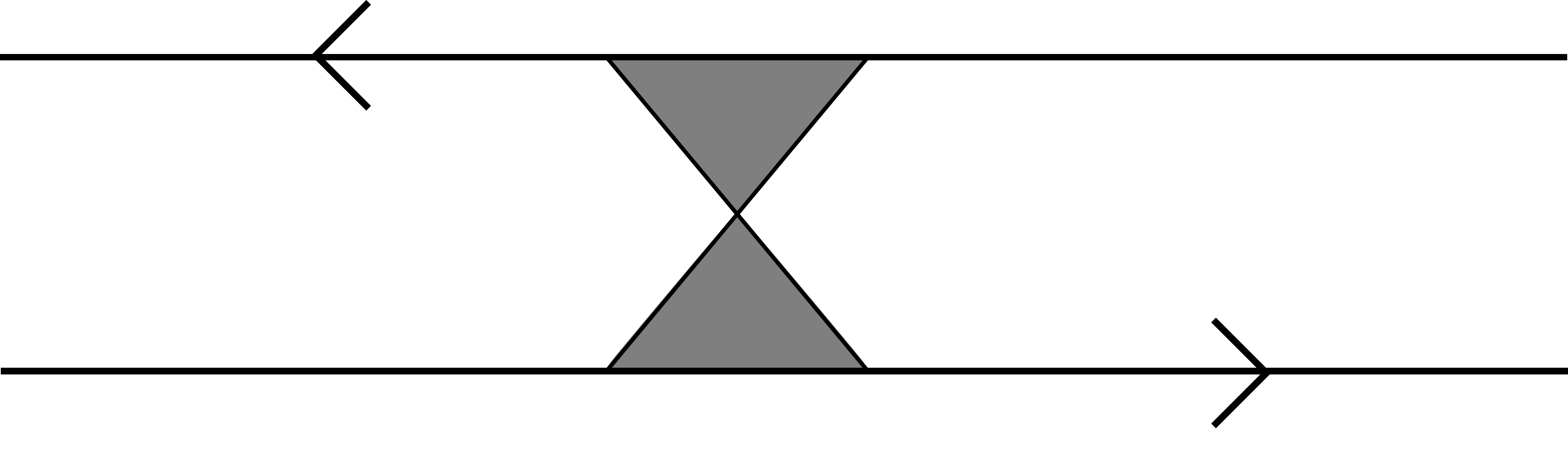   
        \caption{Two arcs in strip position.}
        \label{Strip}
\end{figure}
such that all the arcs in $B$ are disjoint, all the arcs in $C$ are disjoint and each pair $b_i,c_i$ appears as in 
Figure \ref{Strip}.
We suppose that each strip, the grey area between $b_i$ and $c_i$, is disjoint from the others. We also want $B$ and 
$C$ to represent two system of generators
for the relative homology group $H_1(S,\partial S;\F)$. In this way, if we name 
the strips $\mathcal A_i$, we have that
$S\setminus{\bigcup{b_i}}$, $S\setminus{\bigcup{c_i}}$ and $S\setminus{\bigcup{\mathcal A_i}}$ have exactly $n$
connected components.
 
Finally, $z$ and $w$ are two sets of basepoints: $w=\{w_1,...,w_n\}$ and $z=\{z_1,...,z_n\}$. 
We require these sets to have the following properties:
\begin{itemize}
  \item there is a $z_i$ in each component of $S\setminus\bigcup\mathcal A_i$, with the condition that every 
        component contains exactly one element of $z$; 
  \item each $w_i$ is 
        inside one of the strips $\mathcal A_i$, between the arcs $b_i$ and $c_i$, with the property that every strip contains 
        at most one element of $w$.
  \begin{figure}[ht]
        \centering
        \def\svgwidth{12cm}
        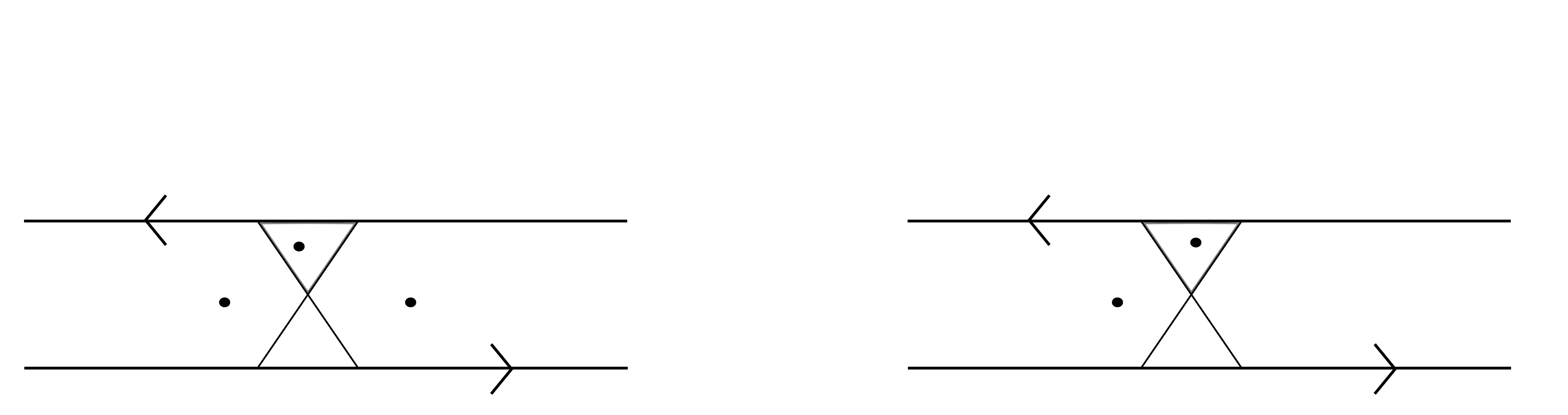   
        \caption{On the left $S_i$ and $S_j$ are different components of $S\setminus\bigcup\mathcal A_i$.}
        \label{Basepoints}
  \end{figure}
       See Figure \ref{Basepoints};
 \item we choose $z$ and $w$ in a way that each component of $S\setminus B$ and $S\setminus C$ contains exactly 
       one element of $z$ and one element of $w$.
\end{itemize}
We can draw an $n$-component link inside $S$ using the following procedure: we go from the $z$'s to the $w$'s by 
crossing $B$ and from $w$ to $z$ by crossing $C$, as shown in Figure \ref{Rule}. 
Moreover, we observe that the components of the link are independent in $S$. 
 
Using the Legendrian realization theorem (Theorem 2.7) in \cite{LOSS} and the procedure we described we can prove that
every abstract open book determines a Legendrian link in a contact 3-manifold up to contactomorphism. 
We are now interested in proving that an adapted open book decomposition $(B,\pi,A)$ always
determines an abstract open book.
\begin{figure}[ht]
 \centering
 \def\svgwidth{12cm}
 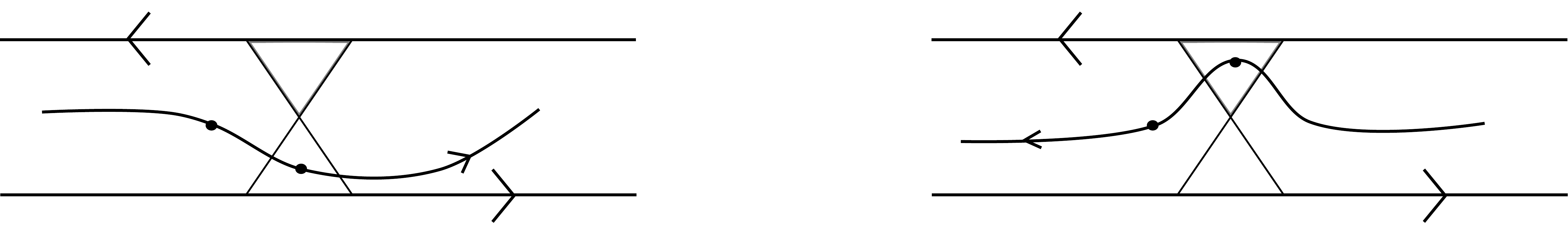   
 \caption{The link is oriented accordingly to the basepoints.}
 \label{Rule}
\end{figure}
\begin{prop}
  \label{prop:abstract}
  We can associate to 
  an adapted open book decomposition $(B,\pi,A)$, compatible with the triple $(L,M,\xi)$, an abstact open book
  $(S,\Phi,\mathcal A,z,w)$ up to isotopy.
\end{prop}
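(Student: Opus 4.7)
The plan is to read off each piece of the abstract open book directly from the adapted data $(B,\pi,A)$. I would set $S:=\overline{\pi^{-1}(1)}$ to be the reference page and take $\Phi$ to be the monodromy of the fibration $\pi$, i.e.\ the return map obtained by cutting $M\setminus B$ open along $S$. For the arc systems I would let $B:=A$ and take $C:=\{c_1,\ldots,c_{2g+l+n-2}\}$ to be a parallel push-off of $B$ inside $S$, where each $c_i$ is chosen on one specific side of $b_i$ so that $b_i$ and $c_i$ cobound a thin strip $\mathcal A_i$ as in Figure \ref{Strip} and $b_i\cap c_i$ equals the pair of common endpoints on $\partial S$. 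Because $c_i$ is isotopic to $b_i$ rel boundary, $C$ represents the same classes in $H_1(S,\partial S;\F)$ as $B$ and is therefore also a system of generators.

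For the basepoints, for each $i=1,\ldots,n$ I would place $w_i$ inside the strip $\mathcal A_i$ associated to the $i$-th distinguished arc, at the unique intersection point of $L_i$ with $a_i$ (displaced slightly into the strip interior), while no $w$ is placed in the strips coming from arcs in $A^2\cup A^3$. For each $i=1,\ldots,n$ I would then place $z_i$ in the component of $S\setminus\bigcup\mathcal A_i$ that contains $L_i$; properties (3) and (4) of an adapted arc system guarantee that this region has exactly $n$ connected components, one per link component, so exactly one $z$ is placed in each. To arrange that every component of $S\setminus B$ and $S\setminus C$ contains exactly one $z$ and one $w$, I would fix the push-off direction of each distinguished $c_i$ using the orientation of $L_i$ at $L_i\cap a_i$, then track which adjacent region of $S\setminus\bigcup\mathcal A_i$ each strip $\mathcal A_i$ merges with after removing only $b_i$ (or only $c_i$); a parity count then shows that the resulting $w$--$z$ pairing in each component is a bijection.

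The main obstacle I anticipate is verifying that the Legendrian link recovered from $(S,\Phi,\mathcal A,z,w)$ via the rule of Figure \ref{Rule} is Legendrian isotopic to $L$ in $(M,\xi)$, and not merely smoothly isotopic or isotopic to a reverse. The smooth match follows because the recovered link on $S$ is the union, over $i=1,\ldots,n$, of arcs from $z_i$ to $w_{\sigma(i)}$ across the $b$'s and from $w_{\sigma(i)}$ back to $z_i$ across the $c$'s, which up to isotopy on $S$ reproduces the original embedding of $L$ in the page $S_1$. The orientation agrees by our choice of push-off sides, and Theorem \ref{teo:reduced} ensures that the contact framing of $L$ coincides with the page framing of $S$, so the Legendrian realization theorem (\cite{LOSS}, Theorem 2.7) upgrades the smooth match to a Legendrian isotopy inside $(M,\xi)$. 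Uniqueness of $(S,\Phi,\mathcal A,z,w)$ up to isotopy is then automatic, since the remaining choices (exact push-off direction within its prescribed side, precise positions of the basepoints, and identification of the reference page) all vary in contractible parameter spaces and do not alter the combinatorial data of the abstract open book.
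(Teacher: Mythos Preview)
Your overall strategy matches the paper's: set $S=\overline{\pi^{-1}(1)}$, take $\Phi$ to be the monodromy of the fibration, build $B$ and $C$ from $A$, and place the basepoints on $L$ using the distinguished arcs. The paper does exactly this, with essentially the same reasoning for why the $z$'s land in distinct regions (Condition 3 on adapted systems) and why the ambiguity in push-off side is resolved by the orientation of $L$ (the rule of Figure~\ref{Rule}).

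There is, however, a genuine error in your description of ``strip position''. You write that $c_i$ is a parallel push-off of $b_i$ with $b_i\cap c_i$ equal to the pair of common endpoints on $\partial S$. This is not what Figure~\ref{Strip} depicts: in the correct configuration the endpoints of $c_i$ are displaced slightly along $\partial S$ (so they are \emph{not} shared with $b_i$), and $b_i$ and $c_i$ intersect transversely in exactly one point in the interior of $S$. The paper's own proof signals this when it requires that ``$L$ does not cross the intersections of the arcs in $B$ with the ones in $C$''. More importantly, these interior intersection points are the entire reason the construction is useful: in Subsection~\ref{subsection:preliminaries} the cycle $\mathfrak L(D)$ is defined as \emph{the only intersection point of $D$ which lies in the page $S_1$}, and that point is precisely the tuple $(b_1\cap c_1,\ldots,b_{2g+l+n-2}\cap c_{2g+l+n-2})$. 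With your parallel-copy convention there would be no such intersection point at all, and the Legendrian invariant could not be defined from the resulting diagram. So while your construction does produce \emph{some} data on $S$, it does not satisfy the paper's definition of abstract open book and is unusable downstream. The fix is simply to perturb each $c_i$ in the standard Honda--Kazez--Mati\'c way (small isotopy moving the endpoints in the direction of the boundary orientation, creating one transverse interior crossing); once you do this, the rest of your argument goes through as written.
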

\begin{proof}
  The surface $S$ is obviously the page $\overline{\pi^{-1}(1)}$. Now
  consider the subsets of unit complex numbers $I_{\pm}\subset S^1\subset\C$ with non-negative and non-positive imaginary part.
  Since they are contractible, we have that
  $\pi\lvert_{\pi^{-1}(I_{\pm})}$ is a trivial 
  bundle. This gives two
  diffeomorphisms between the pages $S_1$ and $S_{-1}$. The monodromy $\Phi$ is precisely the isotopy class of 
  the composition of these diffeomorphisms.
 
  At this point, we want to define the strips $\mathcal A$. Hence, we need the collections of arcs $B$ and $C$: starting from
  the system of generators $A$, which is adapted to $L\subset S$, we take them to be both isotopic to $A$, in
  ``strip position'' like in Figure \ref{Strip} and such that $L$ does not cross the intersections of the arcs in $B$
  with the ones in $C$. 
  We only have an ambiguity on, following the orientation of $L$, which is the first arc intersected by $L$. To solve this
  problem we have to follow the rule that we fixed in Figure \ref{Rule}.
  
  Now we need to fix the basepoints. We put the $z$'s on $L$; exactly one on each
  component of $L\setminus(L\cap\mathcal A)$. The points in $z$ on different components of $L$ stay in different domains
  because of Condition 3 in the definition of adapted system of generators.
  Then $S\setminus\mathcal A$ has $n$ connected components, since the components of $L$ are independent, and each of these 
  contains exactly one element of $z$.
  Since the $z$'s are outside of the strips then we have that each component of $S\setminus B$ and $S\setminus C$ also 
  contains only one $z_i$.   
 
  The $w$'s are still put on $L$, but inside the strips containing the $n$ distinguished arcs. The points $w_1,...,w_n$ 
  correspond to $\mathcal A_1,...,\mathcal A_n$.
\end{proof}

\section{Heegaard Floer homology}
\label{section:three}
\subsection{Legendrian Heegaard diagrams}
\label{subsection:holomorphic}
 Heegaard Floer homology has been introduced by Ozsv\'ath and Szab\'o in \cite{Ozsvath1}; later it was
 generalized to knots and links in \cite{Ozsvath2} and independently
 by Rasmussen, in his PhD thesis \cite{Rasmussen}. In its original formulation links and 3-manifolds were presented with
 Heegaard diagrams, but in this work we only use a specific type of these diagrams, obtained from adapted open book
 decompositions, that we call Legendrian Heegaard diagrams.
 
 From now on a 3-manifold $M$ is always supposed to be a rational homology sphere. Given an adapted open book decomposition
 $(B,\pi,A)$, compatible with the triple $(L,M,\xi)$, a \emph{Legendrian Heegaard diagram} consists of a quintuple  
 $(\Sigma,\alpha,\beta,w,z)$ where $\Sigma$ is a closed, oriented surface, $\alpha$ and $\beta$ are two collections 
 of curves in $\Sigma$ and $w$ and $z$ are two sets of $n$ basepoints in $\Sigma$.
 
 The surface $\Sigma$ is $S_1\cup\overline{S_{-1}}$, where $S_{\pm 1}=\overline{\pi^{-1}(\pm 1)}$; since $\pi$ is a locally
 trivial fibration the pages $S_1$ and $S_{-1}$ are diffeomorphic, but we reverse the orientation of the second one when
 we glue them together.
 
 We have that $A=\{a_1,...,a_{2g+l+n-2}\}$ and we choose $B=\{b_1,...,b_{2g+l+n-2}\}$ in a way that $A$ and $B$ are 
 like in Figure \ref{Strip}. We recall that $g$ is the genus of $S_1$, $l$ is the number of boundary components
 of $S_1$ and $n$ is the number of components of $L$.
 \begin{figure}[ht]
 \centering
 \def\svgwidth{12cm}
 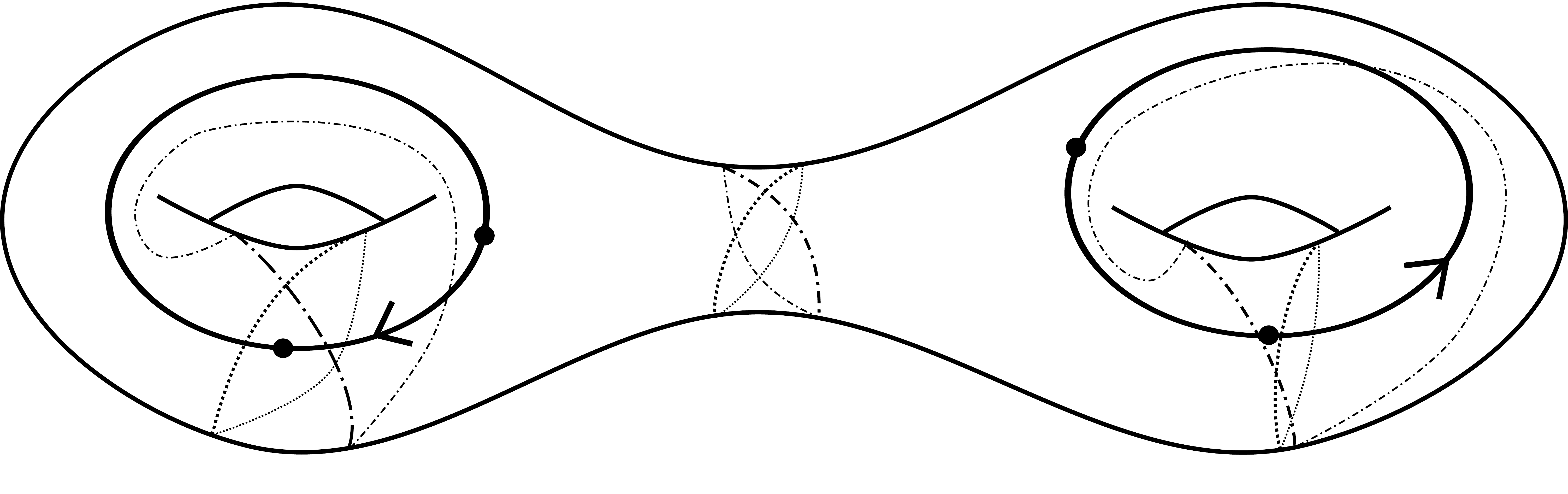   
 \caption{A diagram for the standard Legendrian 2-unlink in $(S^3,\xi_{\text{st}})$.} 
 \label{Unlink}
 \end{figure}
 Then we define $\alpha_i=b_i\cup(h\circ\Phi)(b_i)$ and $\beta_i=a_i\cup h(a_i)$ for every $i=1,...,2g+l+n-2$,
 where
 $h:S_1\rightarrow\overline{S_{-1}}$ is the Identity, and $\Phi$ is the monodromy, which is fixed
 by the open book as seen in Proposition \ref{prop:abstract}. Finally, 
 the $z$'s and the $w$'s are the set of baspoints that we introduced in Subsection \ref{subsection:abstract}.

 In the settings of \cite{Ozsvath1} and \cite{Ozsvath2} the Legendrian Heegaard diagram $(\Sigma,\alpha,\beta,w,z)$ is a 
 diagram for the (smooth) link $L$ in the 3-manifold $\overline M$ (\cite{Ozsvath} Section 3), that is $M$ considered with the
 opposite orientation.
 We remark that here $\alpha$ and $\beta$ are swapped respect to the papers of Ozsv\'ath and Szab\'o.
 
 We observe that, given $(B,\pi,A)$, the only freedom in the choice of the Legendrian Heegaard diagram is in the arcs
 $(h\circ\Phi)(b_1),...,(h\circ\Phi)(b_{2g+l+n-2})$ inside $S_{-1}$, which depend on the isotopy class of 
 $\Phi(b_1),...\Phi(b_{2g+l+n-2})$.
 
\subsection{Basics of Heegaard Floer theory}
 Let us consider a Legendrian Heegaard diagram $D=(\Sigma,\alpha,\beta,w,z)$, coming from an adapted open book 
 decomposition compatible with
 $(L,M,\xi)$. Consider the $(2g+l+n-2)$-dimensional tori \[T_{\alpha}=\alpha_1\times...\times\alpha_{2g+l+n-2}\quad\text{ and }\quad\T_{\beta}=\beta_1\times...\times\beta_{2g+l+n-2}\] in the symmetric power $\text{Sym}^{2g+l+n-2}(\Sigma)$ of $\Sigma$
 and define $\widehat{CF}(D)$ and $cCFL^-(D)$ respectively as the $\F$-vector space and the free $\F[U]$-module generated
 by the elements of the transverse intersection $\T_{\alpha}\cap\T_{\beta}$.
 
 Fix an appropriate symplectic and compatible almost-complex structure
 $(\omega,J)$ on $\text{Sym}^{2g+l+n-2}(\Sigma)$. For every relative
 homology class $\phi\in\pi_2(x,y)$ we define $\mathfrak M(\phi)$ as the moduli space of 
 $J$-holomorphic maps from the unit disk $D\subset\C$ to
 $\left(\text{Sym}^{2g+l+n-2}(\Sigma),J\right)$ with the appropriate boundary conditions, see \cite{Ozsvath1}.
 The formal dimension of $\mathfrak M(\phi)$,
 denoted by $\mu(\phi)$, is the Maslov index; moreover, we call $\widehat{\mathfrak M}(\phi)$ the
 quotient $\mathfrak M(\phi)/\R$ given by translation. 
 
 Since we are working on 3-manifolds, we use the definition of $\text{Spin}^{c}$ structure given by 
 Turaev in \cite{Turaev}: an isotopy class,
 away from a point, of nowhere vanishing vector fields on the manifold.
 As described in \cite{Ozsvath} Section 3.3, we have two well-defined maps
 $$\mathfrak s_w,s_z:\T_{\alpha}\cap\T_{\beta}\longrightarrow\text{Spin}^{c}\left(\overline M\right)\cong H^2(M;\Z)$$ 
 which send an intersection 
 point $x\in\T_{\alpha}\cap\T_{\beta}$
 into the $\text{Spin}^{c}$ structures $\mathfrak s_w(x)$ and $\mathfrak s_z(x)$ on $\overline M$. 
 These maps are obtained by associating, to every $x\in\T_{\alpha}\cap\T_{\beta}$, two global 2-plane 
 fields $\mathfrak\pi_w(x)$ and $\mathfrak\pi_z(x)$
 on $\overline M$, whose restrictions are the corresponding $\text{Spin}^{c}$ structures.
 
 Since our manifold $M$ already comes with a contact structure $\xi$, we have that both $M$ and $\overline M$ are equipped 
 with a specific $\text{Spin}^{c}$
 structure, induced by $\xi$, that we denote with $\mathfrak t_{\xi}$.
 The elements of $\T_{\alpha}\cap\T_{\beta}$ can be partitioned according to the $\text{Spin}^c$ structures on $\overline M$,
 resulting in decompositions 
 $$\widehat{CF}(D)=\bigoplus_{\mathfrak t\in\text{Spin}^{c}\left(\overline M\right)}\widehat{CF}(D,\mathfrak t)
 \:\:\:\:\:\:\:\:\:\:\text{ and }\:\:\:\:\:\:\:\:\:\:
 cCFL^-(D)=\bigoplus_{\mathfrak t\in\text{Spin}^{c}\left(\overline M\right)}cCFL^-(D,\mathfrak t)\:;$$ where
 $\widehat{CF}(D,\mathfrak t)$ is generated by the intersection points $x\in\T_{\alpha}\cap\T_{\beta}$ such that
 $s_z(x)=\mathfrak t$, while $cCFL^-(D,\mathfrak t)$ by the ones such that $s_w(x)=\mathfrak t$. Note that the $\text{Spin}^c$
 structures in the two splittings may be different.
 
 For every $\phi\in\pi_2(x,y)$ we call \[n_{z_i}(\phi)=\#\left|\phi\cap\{z_i\}\times\text{Sym}^{2g+l+n-3}(\Sigma)\right|\quad\text{ and }\quad n_{w_i}(\phi)=\#\left|\phi\cap\{w_i\}\times\text{Sym}^{2g+l+n-3}(\Sigma)\right|\]where here we mean algebraic 
 intersection. Moreover, we have 
 $$n_z(\phi)=\sum_{i=1}^nn_{z_i}(\phi)\:\:\:\:\:\:\:\:\text{ and }\:\:\:\:\:\:\:\:n_w(\phi)=\sum_{i=1}^nn_{w_i}(\phi)\:.$$
 We define the differential 
 $\widehat\partial_z:\widehat{CF}(D,\mathfrak t)\rightarrow\widehat{CF}(D,\mathfrak t)$ as follows:
 $$\widehat\partial_zx=\sum_{y\in\T_{\alpha}\cap\T_{\beta}\lvert_{\mathfrak t}}\sum_{\substack{\phi\in\pi_2(x,y),\\ 
 \mu(\phi)=1,n_z(\phi)=0}}
 \#\left|\widehat{\mathfrak M}(\phi)\right|\cdot y$$ for every $x\in\T_{\alpha}\cap\T_{\beta}\lvert_{\mathfrak t}$.
 We note that, since we are interested in $\phi$'s that are image of some $J$-holomorphic disks, we have that 
 $n_{z_i}(\phi),n_{w_i}(\phi)\geq 0$ (Lemma 3.2 in 
 \cite{Ozsvath1}). This means that $n_z(\phi)=n_w(\phi)=0$ if and only if $n_{z_i}(\phi)=n_{w_i}(\phi)=0$ 
 for every $i=1,...,n$. 
 
 The map $\widehat\partial_z$ is well-defined if the diagram $D$ is admissible, which means that
 every $\phi\in\pi_2(x,x)$ with $n_w(\phi)=0$, representing a non-trivial homology class, has both positive and negative local 
 multiplicities.
 Usually we have a distinction between weak and strong admissibility,
 but if $M$ is a rational homology 3-sphere then the weakly and strongly admissible conditions coincide; 
 see Remark 4.11 in \cite{Ozsvath1} and Definition 3.5 and Subsection 4.1 in \cite{Ozsvath}. Given a diagram we can 
 always achieve admissibility with isotopies.
 \begin{prop}
  \label{prop:admissibility}
  Suppose $D=(\Sigma,\alpha,\beta,w,z)$ is a Legendrian Heegaard diagram 
  given by an adapted
  open book decomposition compatible with $(L,M,\xi)$,
  where $M$ is a rational homology $3$-sphere. Then $D$
  is always admissible up to isotope the arcs in $\overline B$.
 \end{prop}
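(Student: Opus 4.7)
The plan is to adapt the standard winding argument of Ozsv\'ath and Szab\'o (\cite{Ozsvath1}, Section 5) to our setting, taking advantage of two structural features of Legendrian Heegaard diagrams: by Proposition \ref{prop:abstract} the arcs $\overline b_i$ are determined only up to isotopy rel endpoints in $\overline{S_{-1}}$ within the class $\Phi(b_i)$, and all basepoints $w \cup z$ sit on $S_1$, so isotopies inside $\overline{S_{-1}}$ cannot affect basepoint multiplicities or represent a different triple $(L,M,\xi)$.

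First, I would analyze the space of periodic domains $\phi$ with $n_w(\phi)=0$. Since $M$ is a rational homology 3-sphere and $L$ is null-homologous with $n$ components, this space, modulo the fundamental class $[\Sigma]$, is finite-dimensional and spanned by Seifert-type periodic domains $\mathcal P_1, \ldots, \mathcal P_n$, where $\mathcal P_i$ arises from a Seifert surface for the component $L_i$. Each $\mathcal P_i$ contains the basepoint $z_i$ in its interior with multiplicity $\pm 1$ and has $n_w(\mathcal P_i)=0$. Admissibility thus reduces to ensuring that each $\mathcal P_i$, and each nontrivial integer combination of the $\mathcal P_i$'s, contains regions of both signs.

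Second, I would apply a finger-move isotopy for each non-admissible generator. If $\mathcal P_i$ has only nonnegative multiplicities, then inside $\overline{S_{-1}}$ there is a segment of some $\overline b_j$ lying on the boundary between a region of positive multiplicity and a region of zero multiplicity. Isotoping $\overline b_j$ rel endpoints by pushing a small portion across a neighboring $\overline a_k$-arc and back creates a pair of cancelable intersection points and carves out a new small region; by choosing the direction of the push appropriately, this region acquires multiplicity $-1$ in $\mathcal P_i$, producing the required sign change. Because the finger pair is cancelable, the isotopy class of $\overline b_j$ rel boundary is preserved, so the resulting diagram still represents $(L,M,\xi)$.

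The main obstacle is to execute this procedure for all generators $\mathcal P_1, \ldots, \mathcal P_n$ (and their integer combinations) simultaneously without undoing the admissibility attained for previously treated generators. I would handle this by iterating a finger move $k$ times for each $i$, choosing $k$ large enough so that the oscillation it introduces in $\mathcal P_i$ dominates any parasitic effect on the other generators; since the $\mathcal P_i$'s span a finite-dimensional lattice, finitely many such iterations suffice to force every nonzero integer combination to have both positive and negative local multiplicities. This yields an admissible Legendrian Heegaard diagram, obtained from the original one only by isotopies of the arcs in $\overline B$.
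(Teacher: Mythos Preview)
Your approach is correct and is essentially the content of the two references the paper invokes: the paper's own proof consists entirely of the sentence ``It follows from Proposition 3.6 in \cite{Ozsvath} and Theorem 2.1 in \cite{Plamenevskaya},'' and what you have written is a faithful reconstruction of the winding argument those papers carry out, specialised (as in \cite{Plamenevskaya}) to isotopies supported in the $\overline{S_{-1}}$ page so that no basepoint is crossed.

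One small point of imprecision worth tightening: your description of the space of periodic domains with $n_w(\phi)=0$ as ``spanned, modulo $[\Sigma]$, by Seifert-type domains $\mathcal P_1,\dots,\mathcal P_n$'' is not quite right as stated, since $n_w([\Sigma])=n\neq 0$ and the Seifert surface of $L_i$ need not avoid the other components. Fortunately the winding argument does not depend on this particular identification: all that is needed is that the space of relevant periodic domains is a finite-rank lattice (true for any multi-pointed diagram over a rational homology sphere) and that each generator has nontrivial boundary meeting $\overline{S_{-1}}$, so that finger moves on the $\overline b_j$ can be chosen with nonzero pairing against each generator. Your iteration in the last paragraph then goes through verbatim.
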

 \begin{proof}
  It follows from Proposition 3.6 in \cite{Ozsvath} and Theorem 2.1 in \cite{Plamenevskaya}.
 \end{proof}
 The fact that $\widehat\partial_z\circ\widehat\partial_z=0$ is proved in \cite{Ozsvath1}. This gives that
 the pair $\left(\widehat{CF}(D,\mathfrak t),\widehat\partial_z\right)$ is a chain complex.
 In the definition of $\left(\widehat{CF}(D,\mathfrak t),\widehat\partial_z\right)$ we never use the basepoints in $w$, in fact
 the complex does not depend on the link $L$, but only on the number of its components. Moreover, in \cite{Ozsvath1} is proved
 that the homology $\widehat{HF}(D,\mathfrak t)$ of $\left(\widehat{CF}(D,\mathfrak t),\widehat\partial_z\right)$
 is an invariant of the $\text{Spin}^c$ 3-manifold $\left(\overline M,\mathfrak t\right)$ if the number of basepoints
 in $D$ is fixed. When $n=1$ the homology group
 is usually denoted with $\widehat{HF}\left(\overline M,\mathfrak t\right)$.
 
 In addition, since $M$ is a rational homology sphere, $\widehat{CF}(D,\mathfrak t)$ comes with an additional $\Q$-grading,
 called \emph{Maslov grading} \cite{Ozsvath4},
 given by $M_z(x)=d_3(\pi_z(x))$; where $d_3$ is the Hopf invariant of a nowhere zero vector 
 field. The differential $\widehat\partial_z$ drops the Maslov grading by one and then we have that
 $$\widehat{HF}(D,\mathfrak t)=\bigoplus_{d\in\Q}\widehat{HF}_d(D,\mathfrak t)\:;$$ where each $\widehat{HF}_d(D,\mathfrak t)$
 is a finite dimensional $\F$-vector space. 
 
\subsection{Link Floer homology}
 We consider the free $\F[U]$-module
 $cCFL^-(D,\mathfrak t)$ and we define a new differential 
 $$\partial^-:cCFL^-(D,\mathfrak t)\rightarrow cCFL^-(D,\mathfrak t)$$
 in the following way:
 $$\partial^-x=\sum_{y\in\T_{\alpha}\cap\T_{\beta}\lvert_{\mathfrak t}}\sum_{\substack{\phi\in\pi_2(x,y),\\ 
 \mu(\phi)=1,n_z(\phi)=0}}
 \#\left|\widehat{\mathfrak M}(\phi)\right|\cdot U^{n_w(\phi)}y$$  
 and $$\partial^-(Ux)=U\cdot\partial^-x$$ for every $x\in\T_{\alpha}\cap\T_{\beta}\lvert_{\mathfrak t}$.
 If $D$ is admissible then $\partial^-$ is also well-defined. Moreover, the fact that $\partial^-\circ\partial^-=0$ is proved 
 in Lemma 4.3 in \cite{Ozsvath}. Hence, the pair $\left(cCFL^-(D,\mathfrak t),\partial^-\right)$ is a chain complex.
 From \cite{Ozsvath} we know that the homology of $\left(cCFL^-(D,\mathfrak t),\partial^-\right)$, that 
 is denoted with $cHFL^-\left(\overline M,L,\mathfrak t\right)$, is a smooth isotopy invariant of $L$ in $\overline M$. 

 As before we can define the Maslov grading as $M(x)=M_w(x)=d_3(\mathfrak\pi_w(x))$ for every intersection point and we extend 
 it by taking $M(Ux)=M(x)-2$. Note that this definition of Maslov grading is different from the one used in the previous 
 subsection; in fact, now the set $w$ appears in place of $z$. In order to avoid confusion, we denote the Maslov
 grading of $x$ with $M(x)$ only in the case of links; otherwise we specify which set of basepoints is used in the definition. 
 Again the Maslov grading gives an $\F$-splitting
 $$cHFL^-\left(\overline M,L,\mathfrak t\right)=
 \bigoplus_{d\in\Q}cHFL^-_d\left(\overline M,L,\mathfrak t\right)\:;$$ where
 $cHFL^-\left(\overline M,L,\mathfrak t\right)$ is a finitely generated $\F[U]$-module and 
 each $cHFL^-_d\left(\overline M,L,\mathfrak t\right)$
 is a finite dimensional $\F$-vector space. 

 In the case of null-homologous links we can also define a $\frac{\Z}{2}$-grading on $cCFL^-(D,\mathfrak t)$ called \emph{Alexander grading}
 and denoted with $A$. Let us call $\overline M_L$ the 3-manifold with boundary 
 $\overline M\setminus\mathring{\nu(L)}$. Since $L$ has $n$ components, we have that $\partial\overline M_L$ consists of $n$ 
 disjoint tori. On this kind 
 of 3-manifold we define a relative $\text{Spin}^c$ structure as in \cite{Ozsvath}: the isotopy class, away from a point, of 
 a nowhere vanishing vector field such that
 the restriction on each boundary torus coincides with the canonical one (see \cite{Turaev}). We denote the set of the relative
 $\text{Spin}^c$ structures on
 $\overline M_L$ by $\text{Spin}^c\left(\overline M,L\right)$; then we have an identification of 
 $\text{Spin}^c\left(\overline M,L\right)$
 with the relative cohomology group $H^2\left(M_L,\partial\nu(L);\Z\right)$.
 Moreover, from \cite{Ozsvath} we have the following map
 $$\mathfrak s_{w,z}:\T_{\alpha}\cap\T_{\beta}\longrightarrow\text{Spin}^{c}\left(\overline M,L\right)\:.$$
 Clearly, the relative $\text{Spin}^c$ structure $\mathfrak s_{w,z}(x)$ extends to the actual $\text{Spin}^c$ structure
 $\mathfrak s_{w}(x)$. 
 Poincar\'e duality gives that 
 $$H^2\left(M_L,\partial\nu(L);\Z\right)\cong H_1\left(M_L;\Z\right)
 \cong H_1\left(L;\Z\right)\oplus H_1\left(M;\Z\right)\cong\Z^n\oplus H^2\left(M;\Z\right)\:;$$
 where we recall that $H^2\left(M;\Z\right)$ is a finite group. A basis of 
 the $\Z^n$ summand is given by the cohomology classes 
 $\{\text{PD}[\mu_i]\}_{i=1,...,n}$; where $\mu_i$
 is a meridian of the $i$-th component of $L$, oriented coherently. Then we have that 
 $$\mathfrak s_{w,z}(x)=\sum_{i=1}^n2s_i\cdot\text{PD}[\mu_i]+\mathfrak s_w(x)\:,$$ where each $s_i$ is an integer.
 Since $L$ admits a Seifert surface $F$,
 we define the Alexander absolute grading as follows:
 $$A(x)=\sum_{i=1}^ns_i=\dfrac{\mathfrak s_{w,z}(x)[F]}{2}\:,$$
 extended on the whole $cCFL^-(D,\mathfrak t)$ by saying that $A(Ux)=A(x)-1$. 

We have that $\partial^-$ preserves the 
Alexander grading and then there is another 
$\F$-splitting $$cHFL^-\left(\overline M,L,\mathfrak t\right)=
\bigoplus_{d,s\in\Q}cHFL^-_{d,s}\left(\overline M,L,\mathfrak t\right)\:;$$ where each 
$cHFL^-_{d,s}\left(\overline M,L,\mathfrak t\right)$ is a finite dimensional $\F$-vector space.

\section{The invariant}
\label{section:four}
\subsection{Special intersection points in Legendrian Heegaard diagrams}
\label{subsection:preliminaries}
In this subsection we define a cycle in the link Floer chain complex that we previously introduced. The corresponding 
homology class will be our Legendrian invariant.
Let us consider the only intersection point of $D=(\Sigma,\alpha,\beta,w,z)$ which lies in the page $S_1$.
We recall that, in general, the intersection points live in
the space $\T_{\alpha}\cap\T_{\beta}$, but they can be represented inside $\Sigma$. We denote this element with 
$\mathfrak L(D)$. 
\begin{prop}
 \label{prop:intersection}
 The intersection point $\mathfrak L(D)$ is such that $\partial^-\mathfrak L(D)=0$ and then 
 $\mathfrak L(D)$ is a cycle in $cCFL^-\left(D,\mathfrak t_{\mathfrak L(D)}\right)$; 
 where $\mathfrak t_{\mathfrak L(D)}$
 is the $\text{Spin}^c$ structure that it induces on $\overline M$.
\end{prop}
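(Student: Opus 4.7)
}

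The plan is to show that no non-constant $\phi\in\pi_2(\mathfrak L(D),\mathbf y)$ with $\mu(\phi)=1$ and $n_z(\phi)=0$ admits a holomorphic representative; then $\partial^-\mathfrak L(D)=0$ is immediate. The assertion that $\mathfrak t_{\mathfrak L(D)}$ is well-defined as the $\mathrm{Spin}^c$ structure induced by $\mathfrak L(D)$ via $\mathfrak s_w$ follows tautologically once $\mathfrak L(D)$ is identified as a generator.

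First, I would pin down where $\mathfrak L(D)$ lives in the diagram. By the strip-position construction of Subsection \ref{subsection:abstract} and the recipe for $\alpha_i=b_i\cup\overline{b_i}$, $\beta_i=a_i\cup\overline{a_i}$ from Subsection \ref{subsection:holomorphic}, each pair $(\alpha_i,\beta_i)$ meets $S_1$ transversely at the crossings of $a_i$ and $b_i$ inside the strip $\mathcal A_i$, and meets $\overline{S_{-1}}$ at the crossings of $\overline{a_i}$ and $\overline{b_i}$. Selecting in each strip the corner at which every $x_i$ simultaneously pairs off to a genuine tuple in $\T_\alpha\cap\T_\beta$ singles out a unique intersection point, all of whose coordinates lie in $S_1$; this is $\mathfrak L(D)$.

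Next, I would suppose that $\phi\in\pi_2(\mathfrak L(D),\mathbf y)$ with $\mu(\phi)=1$ and $n_z(\phi)=0$ does have a holomorphic representative and derive a contradiction. By positivity (Lemma 3.2 of \cite{Ozsvath1}), the domain $\mathcal D(\phi)=\sum_j n_j D_j$ satisfies $n_j\geq 0$ at every region $D_j\subset\Sigma\setminus(\alpha\cup\beta)$; combined with $n_z(\phi)=0$, every region containing a $z$-basepoint has $n_j=0$. By condition (3) in Subsection \ref{subsection:reduced}, each component of $S_1\setminus\bigcup a_j$ carries exactly one $z_k$, so the entire ``outside-of-strips'' part of $S_1$ consists of regions of multiplicity zero.

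The crucial local step is the analysis at each coordinate $x_i\in S_1$ of $\mathfrak L(D)$. Of the four local quadrants there, the one opposite to the strip $\mathcal A_i$ is adjacent to an unmarked-in-$w$ region outside the strips of $S_1$, hence of multiplicity zero by the previous step; the two side quadrants either share the same property or connect across $\overline{b_i},\overline{a_i}$ into $\overline{S_{-1}}$; and the quadrant inside the strip is the only candidate with possibly positive multiplicity. Applying the standard local multiplicity identity at an initial corner of a Whitney disk (the sum of opposite quadrant multiplicities differs by one), the vanishing of three of the four local multiplicities forces the in-strip quadrant to also vanish. Propagating this to the $\overline{S_{-1}}$-side across the closed curves $\alpha_i,\beta_i$, and using that the $\overline{a_j},\overline{b_j}$ decompose $\overline{S_{-1}}$ into regions adjacent to the binding (hence to the already-vanishing regions of $S_1$), one concludes $\mathcal D(\phi)\equiv 0$, so $\phi$ is constant, contradicting $\mu(\phi)=1$.

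The main obstacle is the last propagation step. For a multi-component link the diagram contains many strips and the $\overline{S_{-1}}$-side is twisted by the monodromy $\Phi$, so one must verify, via the admissibility arising from Proposition \ref{prop:admissibility} and a careful accounting of how multiplicities change across each $\alpha$- and $\beta$-curve, that no ``ghost'' domain supported entirely in $\overline{S_{-1}}$ can have $n_z=0$, positive multiplicities, and the correct corner signature at $\mathfrak L(D)$. Once this combinatorial check is in place, the argument is a component-wise generalisation of the one for knots given in \cite{LOSS,Matic}.
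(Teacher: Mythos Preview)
Your strategy---showing that any positive domain from $\mathfrak L(D)$ with $n_z=0$ must be trivial---is exactly what the paper's one-line proof relies on (via the argument in \cite{Matic,LOSS}). But your local corner analysis contains a genuine gap.

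First, the geometry at $x_i$ is not as you describe. Because $b_i$ is a small perturbation of $a_i$ meeting it transversally once, the two \emph{opposite} quadrants at $x_i$ lie in the thin strip (one piece on each side of the crossing), while the remaining two opposite quadrants lie in the large regions of $S_1$ carrying the $z$-basepoints. So $n_z(\phi)=0$ forces exactly two opposite quadrants to vanish, not three; and in any case the identity you quote would give the remaining quadrant multiplicity $\pm 1$, not $0$---a single bigon already shows that three vanishing quadrants do not force the fourth to vanish at an initial corner.

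Second, the ingredient that actually makes the argument work---and which you never invoke---is the \emph{direction} of the push-off: the endpoints of $b_i$ are displaced from those of $a_i$ along $\partial S_1$ in the direction of its induced orientation. This convention is precisely what guarantees that the quadrant a positive domain with an initial corner at $x_i$ is forced to cover is one of the two $z$-region quadrants, yielding the contradiction with $n_z(\phi)=0$ immediately. With this in hand the ``propagation into $\overline{S_{-1}}$'' you flag as the main obstacle is unnecessary: if no $x_i$ is a corner then $\mathbf y=\mathfrak L(D)$, and since $M$ is a rational homology sphere the only class in $\pi_2(\mathfrak L(D),\mathfrak L(D))$ with $n_z=0$ is the trivial one.
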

\begin{proof}
 Every $\phi\in\pi_2(\mathfrak L(D),y)$ such that $\mu(\phi)=1$, where $y\in\T_{\alpha}\cap\T_{\beta}$, has the property that 
 $n_z(\phi)>0$. The claim
 follows easily from the definition of the differential. 
\end{proof}
In Subsection \ref{subsection:proof} we show that more can be said on the $\text{Spin}^c$ structure $\mathfrak t_{\mathfrak L(D)}$. 
Now we spend a few words about the Ozsv\'ath-Szab\'o 
contact invariant $c(\xi)$, introduced in \cite{Ozsvath3}.
Given a Legendrian Heegaard diagram $D$;
let us call $c(D)$ the only intersection point which lies on the page $S_1$ as before, but now considered as an element in 
$\left(\widehat{CF}(D,\mathfrak t_{c(D)}),\widehat\partial_z\right)$.
The proof of Proposition \ref{prop:intersection} says that $c(D)$ is also a cycle. Moreover, we have the following theorem.
\begin{teo}[Ozsv\'ath and Szab\'o]
 \label{teo:contact}
 Let us consider a Legendrian Heegaard diagram $D$ with a single basepoint $z$,
 given by an open book compatible with a pair $(M,\xi)$, where $M$ 
 is a rational homology 3-sphere. Let us
 take the cycle $c(D)\in\widehat{CF}(D,\mathfrak t_{c(D)})$.
 
 Then the equivalence class of 
 $(\widehat{HF}\left(\overline M,\mathfrak t_{\xi}\right),[c(D)])$ is a contact invariant 
 of $(M,\xi)$ and we denote it with $\widehat c(M,\xi)$.
 Furthermore, we have the following properties:
 \begin{itemize}
  \item the $\text{Spin}^c$ structure $\mathfrak t_{c(D)}$ coincides with $\mathfrak t_{\xi}$;
  \item the Maslov grading of $c(D)$ is given by $M_z(c(D))=-d_3(M,\xi)$. 
 \end{itemize} 
\end{teo}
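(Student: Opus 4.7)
The plan is to follow the Honda--Kazez--Mati\'c reformulation of the Ozsv\'ath--Szab\'o contact class in the open book setting. First, Proposition \ref{prop:intersection} already shows that $c(D)$ is a cycle in $\left(\widehat{CF}(D,\mathfrak t_{c(D)}),\widehat\partial_z\right)$, since any holomorphic disk starting at an intersection point lying on the page $S_1$ must cross the basepoint $z$ placed just off the page. So the content of the theorem is (a) well-definedness of $[c(D)]$ as a contact invariant, (b) the identification of $\mathfrak t_{c(D)}$ with $\mathfrak t_\xi$, and (c) the Maslov grading computation.

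For well-definedness, I would proceed in two layers. First I would fix a compatible open book $(B,\pi)$ and show that any two Legendrian Heegaard diagrams obtained from it differ only through the auxiliary data (the choices of the arcs $\overline b_i$ in $S_{-1}$ within their isotopy class, and admissibility isotopies via Proposition \ref{prop:admissibility}). Each such modification induces an isomorphism of $\widehat{HF}$ through the standard Heegaard moves (isotopies and handleslides), and since the distinguished intersection point lives on $S_1$, disjoint from the region of modification in $S_{-1}$, the continuation maps carry $c(D)$ to $c(D')$. Second, by the Giroux correspondence, any two open books supporting $(M,\xi)$ are related by a sequence of positive stabilizations; so I must check that a positive stabilization of $(B,\pi)$ yields a diagram whose special intersection point represents the same homology class. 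Concretely, a positive stabilization adds a $1$-handle to $S_1$ together with a new arc $a_\ast$ and a matching pair $(\alpha_\ast,\beta_\ast)$ meeting transversely in a single new point $x_\ast$ on the page; the stabilized cycle is $c(D)\cup\{x_\ast\}$, and a model computation shows the new complex splits off an acyclic summand so that $[c(D)\cup\{x_\ast\}]$ corresponds to $[c(D)]$ under the invariance isomorphism.

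For the Spin$^c$ assertion, the key observation is that all points of $c(D)$ lie on $S_1$, so the vector field $\pi_z(c(D))$ on $\overline M$ can be constructed explicitly: on the handlebody determined by the $\alpha$-curves it agrees with the page-framing vector field, and on the $\beta$-handlebody it is obtained by rotating so as to match the unique index-$0$ and index-$3$ critical points coming from $z$. A direct comparison with the Reeb-compatible vector field associated to a $1$-form $\alpha_\xi$ for $\xi$ supported by $(B,\pi)$ shows that these two vector fields are homotopic away from a ball, hence induce the same Spin$^c$ structure, giving $\mathfrak t_{c(D)}=\mathfrak t_\xi$. The Maslov grading then follows from the definition $M_z(c(D))=d_3(\pi_z(c(D)))$: since $\pi_z(c(D))$ is homotopic to the $2$-plane field underlying $\xi$ viewed on the orientation-reversed manifold $\overline M$, and $d_3$ changes sign under orientation reversal, one gets $M_z(c(D))=-d_3(M,\xi)$.

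The main obstacle is the invariance under positive stabilization in the second layer of step two. Verifying that the stabilization model contributes only an acyclic direct summand requires a holomorphic-disk count in the handle region, identifying exactly one nontrivial differential out of the new generator; this is where essentially all of the Heegaard--Floer content resides, and it is the step that actually distinguishes the contact class from a generic intersection point on the page.
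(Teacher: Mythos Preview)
The paper does not supply its own proof here; immediately after the statement it attributes the result to \cite{Ozsvath3} for the original construction and to \cite{Matic} for the open-book reformulation. Your outline is an attempt to reproduce the Honda--Kazez--Mati\'c argument, which is the correct target, but there is a genuine gap in your invariance layer.

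When you fix a compatible open book $(B,\pi)$ and assert that the only remaining choices are the isotopy class of the arcs $\overline b_i$ in $S_{-1}$, you are forgetting that the Heegaard diagram also depends on the choice of the arc system $A=\{a_1,\dots,a_{2g+l-1}\}$ in $S_1$ giving a basis of $H_1(S_1,\partial S_1;\F)$. Two such bases are related by \emph{arc slides}, which induce handleslides of both $\alpha$- and $\beta$-curves in $\Sigma$; one must then check that the triangle-counting maps send $c(D)$ to $c(D')$. This is precisely Lemma~3.5 of \cite{Matic}, and the present paper invokes it again for the link invariant in Section~\ref{section:four}. Your first layer omits this step entirely.

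The same omission undermines your stabilization argument. The clean picture you describe, in which $(\alpha_\ast,\beta_\ast)$ meet in a single point $x_\ast$ and $x\mapsto x\cup\{x_\ast\}$ is a chain isomorphism, only holds after one has first used arc slides to arrange the basis $A$ disjoint from $\gamma'=\gamma\cap S_1$; otherwise the Dehn twist along $\gamma$ makes $\overline b_\ast$ interact with the old curves in $S_{-1}$ and there is no ``acyclic summand'' to split off. So your ``main obstacle'' paragraph is aimed at the wrong difficulty: the actual work in the HKM proof is not a holomorphic-disk count in the handle region but rather the arc-slide invariance that reduces an arbitrary positive stabilization to the trivial one.
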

The proof of this theorem comes from \cite{Ozsvath3}, where Ozsv\'ath and Szab\'o first define the invariant 
$\widehat c(M,\xi)$, 
and \cite{Matic}, where Honda, Kazez and Mati\'c
give the reformulation using open book decompositions that we use in this paper.  

We want to prove a result similar to Theorem \ref{teo:contact}, but for Legendrian links. In other words, we show
that the isomorphism class of the element $[\mathfrak L(D)]$ inside the 
homology group
$cHFL^-\left(\overline M,L,\mathfrak t_{\mathfrak L(D)}\right)$ can be considered, as we are going to explain, a Legendrian invariant of the triple $(L,M,\xi)$, where here it is helpful to remember the observations in Remark \ref{remark:1}. 
Let us be more specific: we consider
a Legendrian link $L\hookrightarrow(M,\xi)$ in a rational homology contact 3-sphere; we associate two open book 
decompositions compatible with $(L,M,\xi)$, say
$(B_1,\pi_1,A_1)$ and $(B_2,\pi_2,A_2)$, and these determine (up to isotopy) two Legendrian Heegaard diagrams, that we call 
$D_1=(\Sigma_1,\alpha_1,\beta_1,w_1,z_1)$ and
$D_2=(\Sigma_2,\alpha_2,\beta_2,w_2,z_2)$ respectively. Then we want to find a chain map 
\begin{equation}
 \label{map}
 \Psi_{(D_1,D_2)}:cCFL^-(D_1,\mathfrak t)\longrightarrow cCFL^-(D_2,\mathfrak t)\:,
\end{equation}
that induces an isomorphism in 
homology, preserves the bigrading and
it is such that 
$\Psi_{(D_1,D_2)}(\mathfrak L(D_1))=\mathfrak L(D_2)$; where 
$\mathfrak t=\mathfrak t_{\mathfrak L(D_1)}=\mathfrak t_{\mathfrak L(D_2)}\in\text{Spin}^c
\left(\overline M\right)$.

Our strategy is to study how Legendrian isotopic triples are related to one another, which means to define a finite set of 
moves between two adapted open book decompositions. Then find the maps 
$\Psi_{(D_1,D_2)}$ in these particular cases. 

\subsection{Open books adapted to Legendrian isotopic links}
We want to show that, given two Legendrian isotopic links $L_1,L_2\hookrightarrow(M,\xi)$, two open book decompositions 
$(B_i,\pi_i,A_i)$, compatible
with the triples $(L_i,M,\xi)$, are related by a finite sequence of moves.
\subsubsection{Global isotopies}
The first lemma follows easily.
\begin{lemma}
 Let us consider an adapted open book
 decomposition $(B_1,\pi_1,A_1)$, compatible with the triple $(L_1,M,\xi)$, 
 and suppose that there is a contact isotopy of 
 $(M,\xi)$, sending $L_1$ into $L_2$.
 
 Then the time-$1$ map of the isotopy is a diffeomorphism 
 $F:M\rightarrow M$ such that $\left(F(B_1),\pi_1\circ F^{-1},F(A_1)\right)$ is an adapted open book decomposition, compatible
 with $(L_2,M,\xi)$.
\end{lemma}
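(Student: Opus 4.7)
The statement is essentially that every structure entering the definition of an adapted open book compatible with $(L_1,M,\xi)$ behaves naturally under a contact diffeomorphism, so pushing forward by $F$ produces the same package of data around $L_2$. My plan is therefore to verify, one clause at a time, that $(F(B_1),\pi_1\circ F^{-1},F(A_1))$ satisfies all four bullets of the compatibility definition from Subsection \ref{subsection:reduced}.

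First I would extract $F$ from the hypothesis: a contact isotopy of $(M,\xi)$ is by definition a smooth family $F_t\in\mathrm{Diff}(M)$ with $F_0=\mathrm{id}$, $F_1(L_1)=L_2$, and $F_t^*\xi=\xi$ for all $t$. In particular $F:=F_1$ is a contactomorphism of $(M,\xi)$, so if $\alpha_1$ is a contact form for $\xi$ then there exists a positive function $f\in C^\infty(M)$ with $F^*\alpha_1=f\alpha_1$; equivalently $(F^{-1})^*\alpha_1=f'\alpha_1$ for a positive $f'$, and $f'\alpha_1$ is another contact form for $\xi$. Next I would verify that $(F(B_1),\pi_1\circ F^{-1})$ is an open book for $M$: the binding $F(B_1)$ is a smooth link since $F$ is a diffeomorphism, $\pi_1\circ F^{-1}$ is a locally trivial $S^1$-fibration on $M\setminus F(B_1)$ because $\pi_1$ was one on $M\setminus B_1$ and $F$ maps complements to complements, and the new pages are precisely $F(S_\theta)$, whose boundary is $F(B_1)$. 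Compatibility with $\xi$ is then immediate from the previous paragraph: $d((F^{-1})^*\alpha_1)=(F^{-1})^*d\alpha_1$ is a positive area form on each $F(S_\theta)$ because $d\alpha_1$ was on $S_\theta$ and $F$ is orientation-preserving, and positivity of the contact form on the binding is transported in the same way.

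After this, the checks on the Legendrian data are formal. The link $L_2=F(L_1)$ lies in the page $F(S_1)$ because $L_1\subset S_1$. Its $n$ components are independent in $H_1(F(S_1);\F)$ since the restriction of $F$ to $S_1$ is a diffeomorphism onto $F(S_1)$, hence induces an isomorphism in $H_1$ carrying the independent classes $[L_1^{(i)}]$ to $[L_2^{(i)}]$. For the adapted system $F(A_1)=\{F(a_1),\dots,F(a_{2g+l+n-2})\}$, each of the four conditions 1--4 in the definition of an adapted system of generators is preserved under the diffeomorphism $F\vert_{S_1}$: the subsets $F(A^1),F(A^2),F(A^3)$ inherit the partition, the basis property for $H_1(F(S_1),\partial F(S_1);\F)$ follows from the induced isomorphism on relative homology, transversality $L_2\pitchfork F(a_i)=\{1\text{ pt}\}$ for $i\le n$ and disjointness for $i>n$ are preserved by any diffeomorphism, the decomposition $F(S_1)\setminus(F(a_1)\cup\cdots\cup F(a_n))$ has the same $n$ components as its preimage under $F\vert_{S_1}$, and the separating/dead classification of arcs is a combinatorial property of the complement that $F$ respects.

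There is no real obstacle here; the content of the lemma is a bookkeeping exercise in naturality, and the only place requiring a moment's care is ensuring that ``contact isotopy'' is being used in the strong sense that $F_t^*\xi=\xi$, so that pushing forward the supporting contact form through $F^{-1}$ yields another contact form for the same $\xi$ with the required positivity. Once that is noted, every remaining item reduces to the fact that $F$ and $F\vert_{S_1}$ are diffeomorphisms and hence preserve smooth, combinatorial, and homological conditions.
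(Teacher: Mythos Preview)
Your proposal is correct and is exactly the naturality check the paper has in mind; the paper itself gives no written proof, simply remarking that the lemma ``follows easily,'' so your clause-by-clause verification is a faithful (if more detailed) expansion of the intended argument.
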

This lemma says that, up to global contact isotopies, we can consider $(B_i,\pi_i,A_i)$ to be both compatible with a 
triple $(L,M,\xi)$, where the link $L$ is 
Legendrian isotopic to $L_i$ for $i=1,2$. In other words, we can just study the relation between two open book decompositions 
compatible with a single triple
$(L,M,\xi)$.
\subsubsection{Positive stabilizations}
Let us start with a pair $(S,\Phi)$. A positive stabilization of $(S,\Phi)$ is the pair
$\left(\widetilde{S},\widetilde\Phi\right)$ 
obtained
in the following way:
\begin{itemize}
 \item he surface $\widetilde S$ is given by adding a 1-handle $H$ to $S$;
 \item the monodromy $\widetilde\Phi$ is isotopic to $\Phi'\circ D_{\gamma}$. The map $\Phi'$ concides with $\Phi$ on $S$ 
       and it is the Identity on $H$.
       While $D_{\gamma}$ is the right-handed Dehn twist along a curve $\gamma$; which intersects $S\cap H$ transversely 
       precisely in the attaching sphere of $H$.
       See Figure \ref{Stabilization}.
\end{itemize}
\begin{figure}[ht]
 \centering
 \def\svgwidth{11cm}
 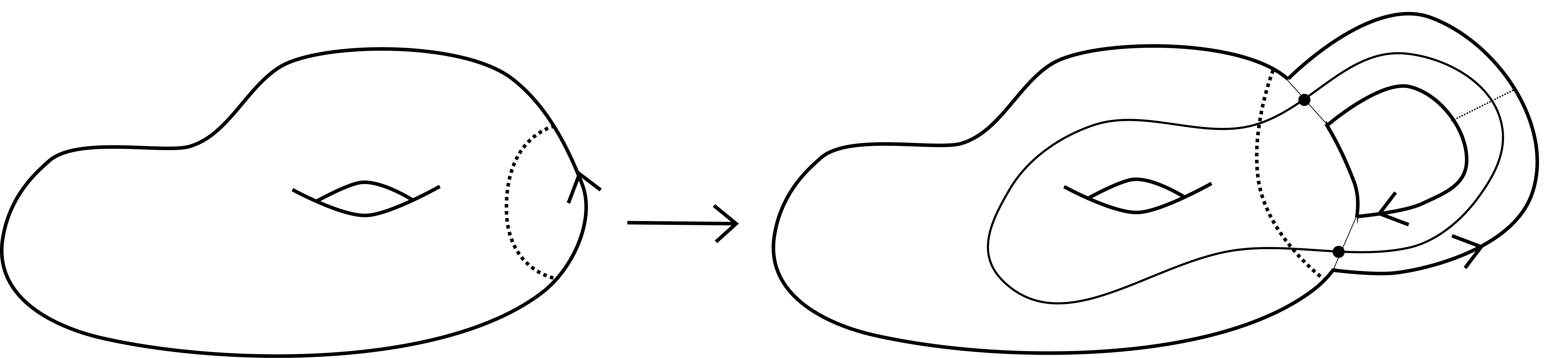   
 \caption{The arc $\gamma\cap\mathring S$ is a generic arc in the interior of $S$.}
 \label{Stabilization}
\end{figure}
We say that $(B',\pi',A')$ is a positive stabilization of $(B,\pi,A)$ if 
\begin{itemize}
 \item the pair $(S',\Phi')$, obtained from $(B',\pi')$, is a positive
       stabilization of $(S,\Phi)$, the one coming from $(B,\pi)$;
 \item the system of generators $A'$ is isotopic to $A\cup\{a\}$, where $a$ is the cocore of $H$ as in Figure 
       \ref{Stabilization}.
\end{itemize}
We recall the following theorem, proved by Giroux \cite{Giroux}. More details can be found in \cite{Etnyre} Section 3 and 4.
\begin{teo}
 \label{teo:Giroux}
 Two open book decompositions $(B_i,\pi_i,A_i)$ are compatible with contact isotopic triples $(L_i,M,\xi_i)$  
 if and only if they admit isotopic positive stabilizations.
\end{teo}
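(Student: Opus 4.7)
The plan is to reduce this statement to Giroux's classical correspondence between contact structures and open book decompositions (up to positive stabilization), and then to bookkeep the additional data of the Legendrian link $L$ and the adapted system of arcs $A$.

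For the ``if'' direction, I would argue that a positive stabilization $(B',\pi',A')$ of $(B,\pi,A)$ supports the same contact structure up to isotopy (by Giroux's theorem for the underlying pairs $(B,\pi)$), and carries the same Legendrian link. Indeed, the new page $\widetilde S$ is obtained by attaching a 1-handle $H$ in a neighborhood of the binding, hence disjoint from $L$, so $L$ still lies Legendrianly on $\widetilde S$ with the contact framing agreeing with the page framing. The cocore $a$ of $H$ extends $A$ to an adapted system of generators for $L$ in $\widetilde S$: conditions 1--4 of Subsection \ref{subsection:reduced} can be verified directly, since $a$ does not meet $L$, it is dual to the new generator of $H_1(\widetilde S,\partial\widetilde S;\F)$ produced by the stabilization, and the disk decomposition of $\widetilde S$ inherits the required structure from that of $S$. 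Hence isotopic stabilizations of $(B_1,\pi_1,A_1)$ and $(B_2,\pi_2,A_2)$ produce adapted open books compatible with contact isotopic triples.

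For the ``only if'' direction, suppose $(B_i,\pi_i,A_i)$ is adapted to $(L_i,M,\xi_i)$ for $i=1,2$, with the two triples contact isotopic. By the previous lemma on global contact isotopies we may assume both triples equal $(L,M,\xi)$. Applying Giroux's theorem to the pairs $(B_i,\pi_i)$ yields a common positive stabilization $(B',\pi')$ supporting $\xi$. Using the Legendrian realization principle (Theorem 2.7 in \cite{LOSS}), together with further positive stabilizations if needed, $L$ can be isotoped onto the page of $(B',\pi')$ in each of the two resulting constructions. The pushed-forward systems of arcs, call them $A_1'$ and $A_2'$, are two adapted systems on a common page for the same link $L$; I would then show that any two such systems differ by an ambient isotopy of the page and a sequence of arc slides, each of which can be realized as (or absorbed into) a further positive stabilization by introducing and sliding an auxiliary cocore arc.

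The main obstacle will be this last step: upgrading Giroux's correspondence from the pair $(B,\pi)$ to the triple $(B,\pi,A)$ requires simultaneous control of the position of $L$ and of the arc system $A$ throughout the stabilization process. In particular, one must verify that every arc slide relating two adapted systems of generators can be implemented as a positive stabilization without perturbing either $L$ or the underlying contact structure, and that the Legendrian realization step can be carried out without ever destroying the independence of the components of $L$ on the intermediate pages. This mirrors the refinements that Honda--Kazez--Mati\'c added to Giroux's original argument, and constitutes the technical core of the proof.
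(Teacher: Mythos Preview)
The paper does not actually prove Theorem \ref{teo:Giroux}; it records it as a known result due to Giroux \cite{Giroux}, with details in Etnyre's lecture notes \cite{Etnyre}, Sections 3 and 4. So your proposal is being compared against a citation rather than an argument.

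That said, your outline reveals a misreading of what the theorem is being used to assert. Look at the sentence immediately following the theorem in the paper: after stabilizing, one obtains ``other two open books $\left(B,\pi,\widetilde{A}_i\right)$'' with a common $(B,\pi)$ but \emph{distinct} arc systems $\widetilde{A}_1,\widetilde{A}_2$. The theorem, as the paper employs it, only asserts that the underlying open books (with the Legendrian link sitting on the page) can be made to agree by positive stabilization; the arc systems are then reconciled separately via admissible arc slides in Proposition \ref{prop:admissible}, and these are treated as an independent move in the summary Theorem \ref{teo:moves}. Your last step---absorbing arc slides into further positive stabilizations---is therefore neither needed nor attempted in the paper, and it is far from clear it can be done: an arc slide is a move intrinsic to a fixed page, whereas a positive stabilization changes the page by attaching a handle and adds only a cocore arc, so there is no evident mechanism by which the latter realizes the former. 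The ``main obstacle'' you flag at the end is thus an obstacle of your own making.

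Your ``if'' direction and the first half of your ``only if'' direction (reducing to a common $(B,\pi)$ via Giroux, with $L$ kept on the page) are in line with the intended content of the cited results. Drop the attempt to force the arc systems to agree through stabilization alone, and your sketch matches how the paper actually uses the theorem.
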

In other words, we may need to stabilize both open books many times and eventually we obtain other two open books 
$\left(B,\pi,\widetilde{A}_i\right)$, both compatible with
$L\hookrightarrow(M,\xi)$; which is contact isotopic to $(L_i,M,\xi_i)$ for $i=1,2$.
\subsubsection{Admissible arc slides}
Take an adapted system of generators $A$ for an $n$-component link $L$, lying inside a surface $S$. 
\begin{figure}[ht]
 \centering
 \def\svgwidth{12cm}
 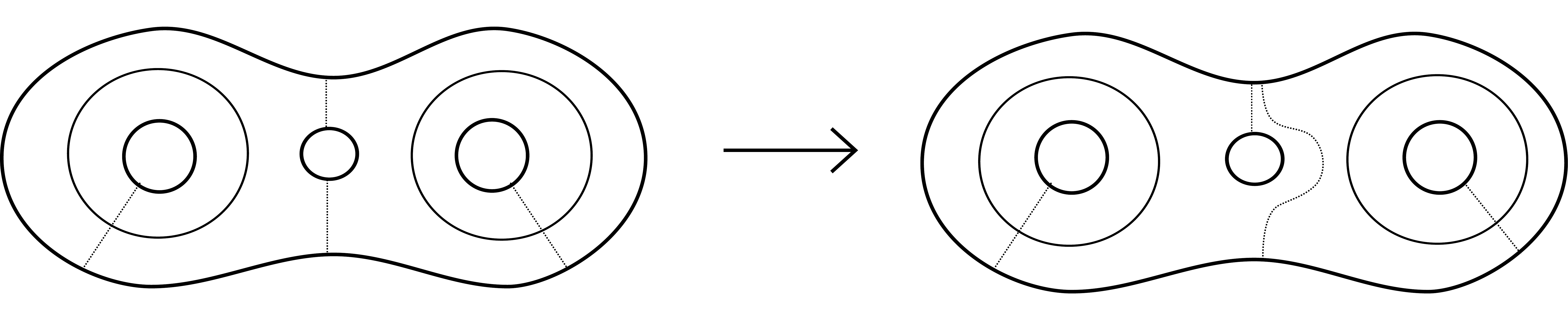   
 \caption{The arc $a_i+a_j$ is obtained by sliding $a_j$ over $a_i$.}
 \label{Admissible}
\end{figure}
We define admissible arc slide, 
a move that change $A=\{a_1,...,a_i,...,a_j,a_{2g+l+k-2}\}$ into $A'=\{...,a_i+a_j,...,a_j,...\}$; where $a_j$ is not a
distinguished arc and one of the endpoints of
$a_i$ and $a_j$ are adjacent, like in Figure \ref{Admissible}. We can prove the following proposition.
\begin{prop}
 \label{prop:admissible}
 Let us consider two open book decompositions $(B,\pi,A_i)$, compatible with the Legendrian link $L\hookrightarrow(M,\xi)$. 
 Then, after a finite number of admissible
 arc slides and isotopies on $A_i$, the open books coincide.
\end{prop}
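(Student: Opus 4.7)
The plan is to match the two systems $A_1$ and $A_2$ in three successive stages, corresponding to the three types of arcs $A^1$, $A^2$, $A^3$, in that order. Since both triples $(B,\pi,A_i)$ are compatible with the same data $(L,M,\xi)$, the page $S_1$ and the Legendrian link $L\subset S_1$ are identical for $i=1,2$; only the systems of arcs differ. Throughout the argument the admissibility constraint is that whenever we slide $a_i$ over $a_j$, the arc $a_j$ must not be distinguished, so the slide takes place in the complement of $L$ and does not alter the geometric intersections with the link.

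In the first stage I would match the distinguished arcs $A_1^1$ and $A_2^1$. For each component $L_j$, both $A_1^1$ and $A_2^1$ contain exactly one arc meeting $L_j$ transversely in one point and disjoint from the other components of $L$. The difference of these two arcs defines a relative $1$-cycle in $H_1(S_1,\partial S_1;\F)$ whose class lies in the subspace spanned by the non-distinguished generators in $A_1^2\sqcup A_1^3$ (both systems being bases). Such a combination can be realized geometrically by a sequence of admissible arc slides of the distinguished arc over non-distinguished ones, followed by an ambient isotopy in the complement of $L$. Applying this component by component yields $A_1^1=A_2^1$, and transversality with $L$ is preserved because the supports of all slides are disjoint from $L$.

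In the second stage, with $A_1^1=A_2^1$, the cut surface $S_1\setminus A^1$ splits into $n$ sub-surfaces $\Sigma_1,\dots,\Sigma_n$ by condition 3 in the definition of adapted system of generators. On each $\Sigma_j$ the arcs of $A_i^2$ are properly embedded, disjoint from $L_j$, and their classes form a basis of a fixed subspace of $H_1(\Sigma_j,\partial\Sigma_j;\F)$. Any two such bases are related by elementary transformations of the change-of-basis matrix, each of which is realized topologically as an admissible slide of one arc of $A_1^2$ over another arc of $A_1^2$. This converts $A_1^2$ into $A_2^2$ up to isotopy without disturbing the distinguished arcs. In the third stage, once $A_1^1\sqcup A_1^2=A_2^1\sqcup A_2^2$, the disk $D=S_1\setminus(A^1\sqcup A^2)$ is common to both systems, and each $A_i^3$ consists of $n-1$ separating arcs encoding a spanning tree on the $n$ components of $D$. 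Two spanning trees on $n$ nodes are related by a sequence of edge exchanges, each realized by an admissible slide of a separating arc over another separating arc.

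I expect the first stage to be the main obstacle: while the homological statement is immediate, realizing the slides geometrically requires that each intermediate configuration remain a valid adapted system, i.e.\ a family of disjoint arcs whose complement has the required connectedness and distinguished-arc properties. The careful sequencing of the slides, localization of their supports in small collars of the arcs involved, and the verification that no unwanted intersections with $L$ are created, is the delicate part, and it is precisely here that the asymmetric definition of admissible slide (forbidding slides over distinguished arcs) is used in an essential way.
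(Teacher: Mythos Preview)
Your approach is genuinely different from the paper's. The paper does \emph{not} match the distinguished arcs first. Instead it begins by matching the \emph{separating} arcs of $A_1$ and $A_2$ (using admissible slides on both systems), so that the surface cut along them yields the same collection of subsurfaces $S_1,\dots,S_n$, each containing one component of $L$ together with one distinguished arc and some dead arcs. At that point the problem decouples into $n$ independent knot problems, and the paper simply invokes Proposition~3.2 of \cite{LOSS} on each piece. Two preliminary lemmas (invertibility of admissible slides, and preservation of the distinguished/separating/dead labels under such slides) control the combinatorics.

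Your three-stage plan has a real gap in Stage~2. After you have arranged $A_1^1=A_2^1$ and cut to obtain $\Sigma_1,\dots,\Sigma_n$, you assert that on each $\Sigma_j$ the arcs of $A_1^2$ and of $A_2^2$ form bases of \emph{the same} subspace of $H_1(\Sigma_j,\partial\Sigma_j;\F)$. There is no reason this should hold: nothing in the definition forces $A_1^2$ and $A_2^2$ to distribute themselves among the $\Sigma_j$ in the same way, and in particular $|A_1^2\cap\Sigma_j|$ and $|A_2^2\cap\Sigma_j|$ may differ. What \emph{is} true is that $(A_i^2\cup A_i^3)|_{\Sigma_j}$ is a full cut system for $\Sigma_j$ for each $i$, which is exactly why the paper does not separate the treatment of $A^2$ from that of $A^3$, and why it organises the argument around the separating arcs rather than around the partition $A^1\sqcup A^2\sqcup A^3$. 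A secondary issue, which you yourself flag, is that all three stages rely on a purely homological identification to produce arc slides; since the definition of admissible slide requires adjacent endpoints on $\partial S_1$, the passage from ``same class in $H_1(S_1,\partial S_1;\F)$'' to ``related by admissible slides'' is itself a nontrivial geometric statement that you have not supplied.
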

We need three preliminary lemmas. We recall that, according to the definition in Subsection \ref{subsection:reduced}, we have $A=A^1\sqcup A^2\sqcup A^3$, where $A^1$ is the set of distinguished arcs, the set $A^3$ contains only living arcs and the arcs in $A^2$ can be either dead or alive. 
\begin{lemma}
 \label{lemma:inverse}
 An admissible arc slide, from $A$ to $A'$, can be inverted. In the sense that we can perform another admissible arc slide,
 now from $A'$ to $A''$, such that $A''$ is isotopic to $A$. 
\end{lemma}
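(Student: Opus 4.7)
The plan is straightforward: to invert an admissible arc slide, I will simply perform the same type of slide a second time. Given the move from $A=\{\ldots,a_i,\ldots,a_j,\ldots\}$ to $A'=\{\ldots,a_i+a_j,\ldots,a_j,\ldots\}$, the candidate inverse will be the admissible arc slide on $A'$ that again slides the new arc $a_i+a_j$ over $a_j$, yielding $A''=\{\ldots,(a_i+a_j)+a_j,\ldots,a_j,\ldots\}$.

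The first step is to check that this second slide is itself an admissible arc slide in the sense of the definition preceding the lemma. The arc being slid over is still $a_j$, which by hypothesis of the original slide is not a distinguished arc; so that condition is automatically inherited. For the adjacency requirement illustrated in Figure \ref{Admissible}, I would observe that the band-sum arc $a_i+a_j$ has as its two endpoints precisely the pair of endpoints of $\{a_i,a_j\}$ that were \emph{not} used for the band. Hence $a_i+a_j$ still shares an adjacent endpoint with $a_j$ on $\partial S_1$, which is exactly the local configuration needed to perform a second slide.

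The second step is to verify that the new arc $(a_i+a_j)+a_j$ is isotopic to $a_i$ in $S_1$ relative to $\partial S_1$. Geometrically, the two parallel copies of $a_j$ used in the successive band sums cobound a thin embedded strip in $S_1$ that is disjoint from the remaining arcs of $A$ (we are free to take the second parallel copy very close to the first); collapsing this strip cancels the two contributions and leaves the original arc $a_i$ up to isotopy. At the level of relative homology this is the identity $(a_i+a_j)+a_j=a_i$ in $H_1(S_1,\partial S_1;\F)$, confirming that $A''$ and $A$ represent the same basis and, more strongly, that they are isotopic as systems of arcs.

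Since all other arcs of $A''$ coincide with those of $A$, the conclusion follows. I expect the only genuinely delicate point to be the cancellation argument for the two consecutive band sums: one must choose the parallel copy of $a_j$ used in the second slide to be close enough to the first copy that the bounded strip meets neither the link $L$ nor any other arc of $A$, so that the strip-collapse isotopy can be carried out within the ambient data of the system of generators. Once this is arranged, the admissibility of the inverse slide and the isotopy $A''\simeq A$ are both immediate.
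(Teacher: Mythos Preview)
Your proposal is correct and follows the same approach as the paper: the paper's proof simply says to slide $a_i+a_j$ over an arc $a_j'$ isotopic to $a_j$ so that $a_i+a_j+a_j'$ is isotopic to $a_i$. Your version spells out the admissibility check and the strip-collapse isotopy that the paper leaves implicit, but the idea is identical.
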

\begin{proof}
 If the arc slide changes the arc $a_i$ into $a_i+a_j$ then it is easy to see that we can just slide $a_i+a_j$ over an 
 arc $a_j'$, isotopic to $a_j$;
 in a way that $a_i+a_j+a_j'$ is isotopic to $a_i$. 
\end{proof}
\begin{lemma}
 \label{lemma:three}
 Suppose that $a_i\in A^2\subset A$ is a living arc in an adapted system of generators. Then we can permute the arcs in $A$ in a way that $a_i\in A^3$ and $A$ is still an adapted system of generators.
\end{lemma}
\begin{proof}
 From Condition 4 in the definition of adapted system of generators, we have that $S\setminus A$ is the disjoint union of $n$ disks $D_1,...,D_n$ and each arc in $A^3$ connects exactly one pair of them.
 This means that if we build a graph $\mathcal G$ by taking the $D_i$'s as vertices and the arcs in $A^3$, which are all alive by definition, as edges then $\mathcal G$ is a tree. 
 
 Since $a_i$ is a living arc, we obtain that $a_i$ appears on the boundary of two distinct disks $D_1$ and $D_2$. Consider the minimal path in $\mathcal G$ from $D_1$ to $D_2$, and denote with $D_3$ the first disk that we meet after $D_1$. We call $a_j$ the arc in $A^3$ which connects $D_1$ to $D_3$; we claim that swapping $a_i$ and $a_j$ in $A$ yields again an adapted system of generators $B$.
 
 This follows because $S\setminus(B^1\sqcup B^2)$ is a disk, which implies $B^1\sqcup B^2$ is a basis of $H_1(S,\partial S;\F)$, and $B$ disconnects $S$ into the same collection of disks as $A$ does.
\end{proof}
\begin{lemma}
 \label{lemma:four}
 Suppose that we perform an admissible arc slide that changes $a_i$ into $a_i+a_j$. Then the set $A'$, obtained from $A$ by sliding $a_i$ over $a_j$, is still an adapted system of generators, possibly after rearranging the arcs. Furthermore, we have the following facts: 
 \begin{enumerate}[a)]
  \item the arc $a_i$ is distinguished if and only if $a_i+a_j$ is a distinguished arc; 
  \item the arc $a_i$ is alive if and only if $a_i+a_j$ is a living arc; 
  \item the arc $a_i$ is dead if and only if $a_i+a_j$ is a dead arc. 
 \end{enumerate}
\end{lemma}
\begin{proof}
 \begin{enumerate}[a)]
  \item The arc $a_i+a_j$ represents the sum of the relative homology classes of $a_i$ and $a_j$. At this point, since $a_j$ cannot be distinguished from the definition of admissible arc slide, we just need basic linear algebra. The converse follows in the same way, applying Lemma \ref{lemma:inverse}.
  \item We use Lemma \ref{lemma:three} to assume that $a_i\in A^3$. 
        Now, we observe that $a_i+a_j$ disconnects $S\setminus(A^1\sqcup A^2)$ because $A^1\sqcup A^2$ is a basis of $H_1(S,\partial S;\F)$; moreover, when considering $S\setminus A'$ the arc $a_i+a_j$ also appears on the boundary of two distinct disks.
        
        This argument proves both that $A'$ is still an adapted system of generators and $a_i+a_j$ is alive.
        The other implication follows easily from Lemma \ref{lemma:inverse}. 
  \item It follows from $b)$ and $c)$, but after an additional observation: it is not obvious that if $a_i$ is dead and $a_j\in A^3$ then $(A')^1\sqcup(A')^2$ is still a basis of $H_1(S,\partial S;\F)$. To see this, we assume that it is not the case; then using linear algebra we obtain that swapping $a_i+a_j$ and $a_j$ in $A'$ yields an adapted system of generators. Hence, from Lemma \ref{lemma:inverse} we can slide $a_i+a_j$ over $a_j$ and get (a permutation of ) $A$ back, but now $a_i$ is alive because of Point b). This is a contradiction.
 \end{enumerate}
\end{proof}
As an example we explain in detail what happens in Figure \ref{Admissible}. Denote by $d_1$ and $d_2$ the two distinguished arcs; then we have $A=\{d_1,d_2,a_i,a_j\}$ and $A'=\{d_1,d_2,a_j,a_i+a_j\}$. In $A$ the arcs $a_i$ and $a_j$ are both alive, while $a_j$ dies in $A'$ (Lemma \ref{lemma:four} does not say anything about that); accordingly to Lemma \ref{lemma:four}, we needed to rearrange the ordering of the arcs to guarantee that $A'$ remains an adapted system of generators. Notice however that being dead or alive does not depend on the ordering of the arcs in $A$.

The strategy of the proof of Proposition \ref{prop:admissible} 
is, say $S_1,...,S_n$ and $S_1',...,S_n'$ are the components of $S$ minus the living arcs of 
$A_1$ and $A_2$ respectively, we modify all
the living arcs, in both $A_1$ and $A_2$, with admissible arc slides; in order for $S_i$ to concide with $S_i'$ for 
every $i=1,...,n$. Moreover, we also
want that each living arc in $A_1$ becomes isotopic to a living arc in $A_2$. 

At the end, each of the components of $S$ 
minus the living arcs will contain a unique component of $L$, a unique distinguished arc and $S_i$ will have the same number of dead arcs respect to $S_i'$; in particular, this means that $A_1$ and $A_2$ will also have the same number of living arcs. We then conclude 
applying $n$ times Proposition 3.2 in
\cite{LOSS}; which is the knot case of this proposition. 
\begin{proof}[Proof of Proposition \ref{prop:admissible}]
\begin{figure}[ht]
  \centering
  \def\svgwidth{12cm}
  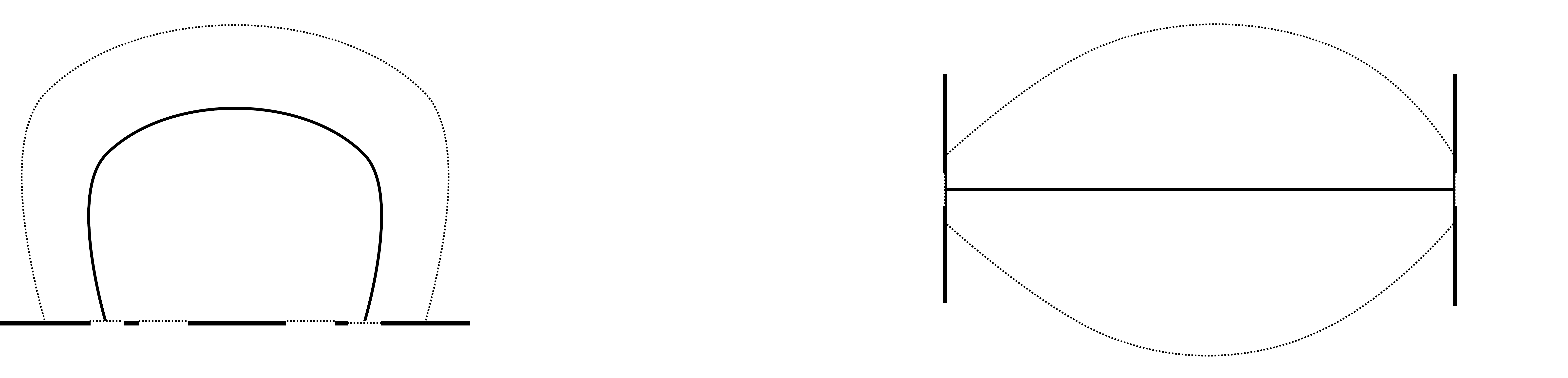   
  \caption{Case 1 is on the left. The figure actually shows a portion of $S\setminus A_1$.}
  \label{Cases}
 \end{figure}
 We want to prove that there is an adapted system of generators $A$ for $L$ in $S=\overline{\pi^{-1}(1)}$ such that
 $A$ is obtained, from $A_1$ and $A_2$, by 
 a sequence of admissible arc slides (and isotopies).
  
 We start from the component $S_1$. We can suppose that $\partial S_1$ contains the living arc 
 $a_{2g+l}\subset A_1^3\subset A_1$ and the distinguished arc
 $a_1\subset A_1^1\subset A_1$, with almost adjacent endpoints. 
 \begin{figure}[ht]
         \centering
         \def\svgwidth{8cm}
         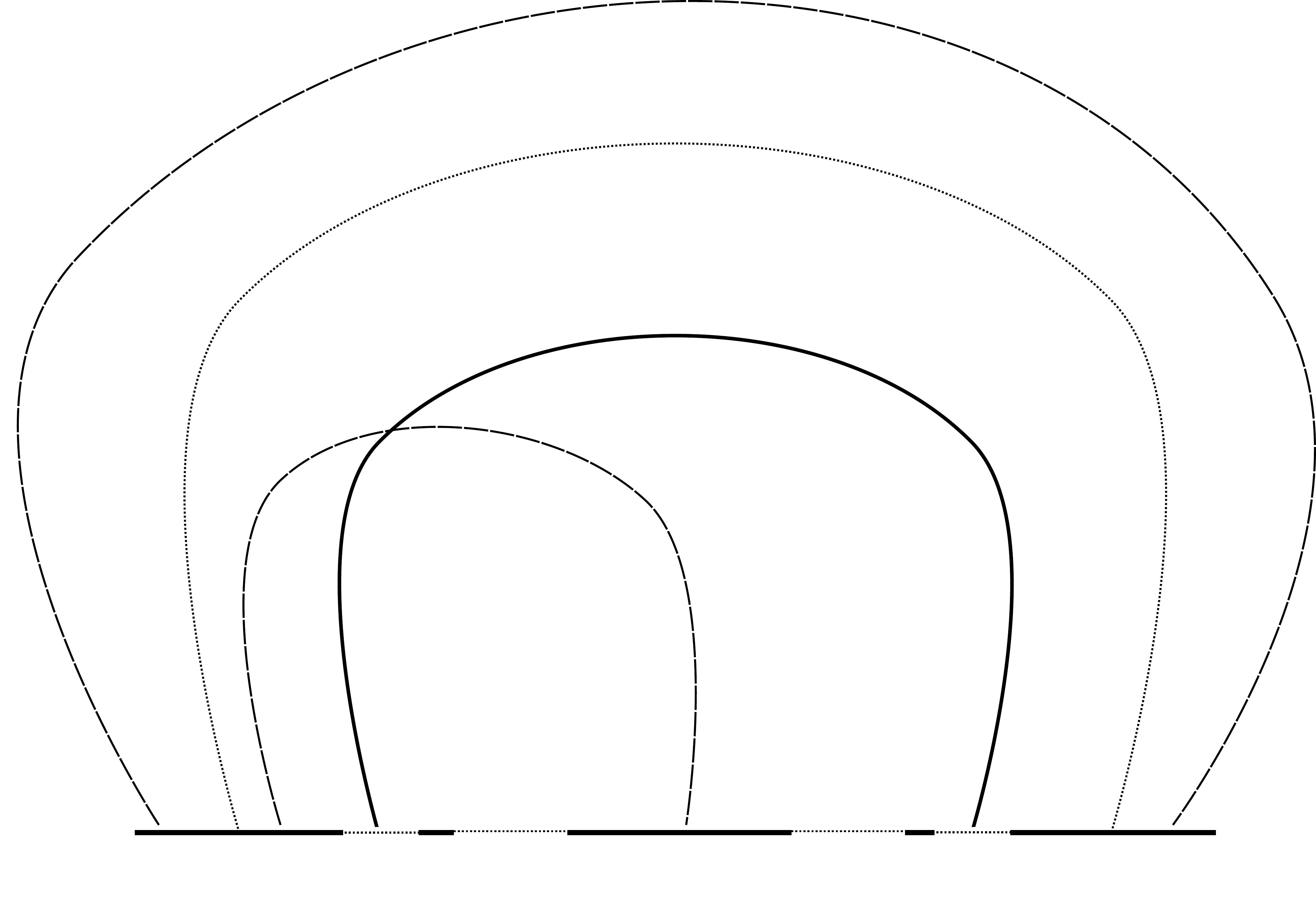   
         \caption{There are no other living arcs, except for $a_{2g+l}$ in $S_1$.}
         \label{Case1}
        \end{figure}
 We can have two cases: in the first one there are no other living arcs
 in the portion of $\partial S_1$ on the opposite side of $a_{2g+l}$. 
 In the second case they appear; possibly more than one, but we can suppose that there is
 exactly one of them. See Figure \ref{Cases}.
 \begin{enumerate}[$\text{Case}$ 1.]
  \item When we consider $S_1'$, which contains the same component of $L$ that is in $S_1$, after some arc slides we have 
        that it appears as in 
        Figure \ref{Cases} (left).
        This is because in the same figure we see that $S_1$ is split in two pieces by $L_1$ and the innermost one is not 
        connected in any way to other 
        components of $S$; in fact there are no living arcs on that side. This means that the same holds for $S_1'$ too. 
        At this point it is easy to see that 
        $S_1$ can be modified to be like in Figure \ref{Case1}; more explicitly, the living arcs are parallel and the 
        distinguished arcs lie in the region 
        where $L_1$ is.
  \item As before we have that also $S_1'$ appears like in the right part of Figure \ref{Cases} (always after some arc 
        slides). The reason is the same of previous case.
        Hence, now we can modify $S_1$ to be like in Figure \ref{Case2}; just in the same way as we did in Case 1. 
 \end{enumerate}
 We have obtained that the living arcs are fixed on $S_1$ and $S_1'$ and then the surfaces
 now have isotopic boundaries.
 \begin{figure}[ht]
         \centering
         \def\svgwidth{8cm}
         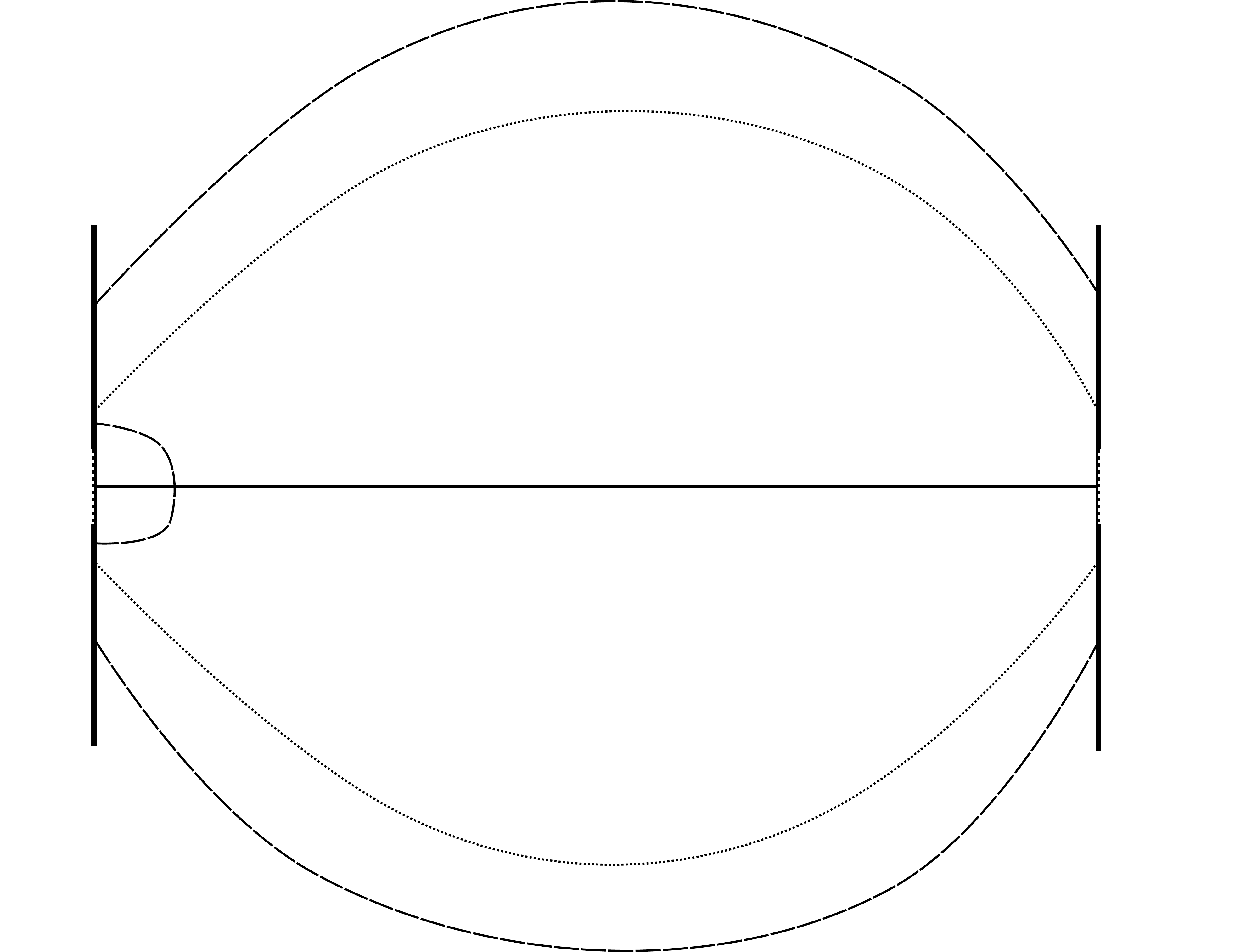   
         \caption{There are exactly two living arcs, namely $a_{2g+l}$ and $a_i$ in $S_1$.}
         \label{Case2}
        \end{figure}
 Hence, we can move 
 to another component $S_2$, whose boundary
 contains a living arc that has not yet been fixed, and we repeat the same process described before. 
 We may need to
 slide some living arcs over the ones in $A^3$ (or $(A')^3$) that
 we have already fixed in the previous step, but this is not a problem. We just iterate this procedure until all the $S_i$ 
 coincides with $S_i'$ and this
 completes the proof.
\end{proof}
The results of this subsection prove the following theorem.
\begin{teo}
 \label{teo:moves}
 If $L_1,L_2\hookrightarrow(M,\xi)$ are Legendrian isotopic links then the open book decompositions $(B_i,\pi_i,A_i)$, 
 which are 
 compatible with the corresponding triples, are related by a finite sequence of global contact isotopies, positive 
 stabilizations and admissible arc slides.
\end{teo}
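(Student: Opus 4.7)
The plan is to chain the three moves established earlier in this subsection, peeling off one type of discrepancy at each step until the two diagrams coincide. First I would invoke the global isotopy lemma: since $L_1$ and $L_2$ are Legendrian isotopic, there is a contact isotopy of $(M,\xi)$ carrying $L_1$ to $L_2$, whose time-$1$ diffeomorphism $F$ sends $(B_1,\pi_1,A_1)$ to an adapted open book $\bigl(F(B_1),\pi_1\circ F^{-1},F(A_1)\bigr)$ compatible with $(L_2,M,\xi)$. After recording this as a global contact isotopy move, I may assume that both $(B_1,\pi_1,A_1)$ and $(B_2,\pi_2,A_2)$ are compatible with a single triple $(L,M,\xi)$.

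Next I would apply Theorem \ref{teo:Giroux} to the two (now identically supported) pairs $(B_1,\pi_1)$ and $(B_2,\pi_2)$. Giroux's correspondence, in the form stated for adapted open books, guarantees a finite sequence of positive stabilizations on each side producing open books $\bigl(B,\pi,\widetilde{A}_1\bigr)$ and $\bigl(B,\pi,\widetilde{A}_2\bigr)$ whose underlying open book $(B,\pi)$ agrees up to isotopy and which are both compatible with a Legendrian link isotopic to $L$. At this point the only remaining discrepancy between the two open book decompositions lies in the choice of adapted system of generators on the common page.

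Finally, I would apply Proposition \ref{prop:admissible} to the two adapted systems $\widetilde{A}_1$ and $\widetilde{A}_2$ on the common page $\overline{\pi^{-1}(1)}$, both compatible with the same Legendrian link. The proposition converts $\widetilde{A}_1$ into $\widetilde{A}_2$ by a finite sequence of admissible arc slides and isotopies, completing the chain of moves. The main obstacle I anticipate is purely bookkeeping: verifying that the positive stabilizations in the middle step can be carried out compatibly with the adapted arc system (so that the stabilized arc system is of the type treated in Proposition \ref{prop:admissible}), and that the intermediate isotopies invoked along the way do not reintroduce differences already eliminated at a previous stage. All of this is handled by the construction of positive stabilization given in Subsection \ref{subsection:abstract}, where the new arc added to $A$ is the cocore of the attached $1$-handle, so no genuinely new difficulty arises.
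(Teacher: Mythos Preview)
Your proposal is correct and follows essentially the same three-step reduction as the paper: the paper simply declares that ``the results of this subsection prove the following theorem,'' and those results are exactly the global isotopy lemma, Theorem~\ref{teo:Giroux}, and Proposition~\ref{prop:admissible}, chained in the order you describe. One small slip: the construction of positive stabilization with the cocore arc is given in the ``Positive stabilizations'' subsubsection of Section~\ref{section:four}, not in Subsection~\ref{subsection:abstract}.
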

Though they are easy to deal with, we do not have to forget that, when we define the corresponding Legendrian
Heegaard diagrams,
we need to consider the choises
of the monodromy $\Phi$ and the families of arcs $a$,$b$ and basepoints $z$,$w$ inside their isotopy classes.

\subsection{Invariance}
\subsubsection{Definition of the diagrams and global isotopies}
If two open book decompositions are related by a global isotopy then it easy to see that the induced abstract open book 
coincide, up to 
conjugation of the monodromy and isotopy of the curves 
and the basepoints in the diagrams. 

Hence, let us consider an abstract open book $(S,\Phi,A,z,w)$ and recall that in $S$ we have another set of arcs $B$,
as explained in Subsections \ref{subsection:abstract} and \ref{subsection:holomorphic}. The first check is easy: in fact if we 
perturb the basepoints inside
the corresponding components of $S\setminus A\cup B$ then even the complex $(cCFL^-(D,\mathfrak t),\partial^-)$ does not
change; where $D$ is a Legendrian Heegaard diagram obtained from $(S,\Phi,A,z,w)$. The same is true 
for an isotopy of $S$.

Now for what it concerns $S$ we are done, but when we define the chain complex we also consider the closed surface
$\Sigma$, obtained by gluing together $S$ and $\overline S$. 
\begin{figure}[ht]
 \centering
 \def\svgwidth{12cm}
 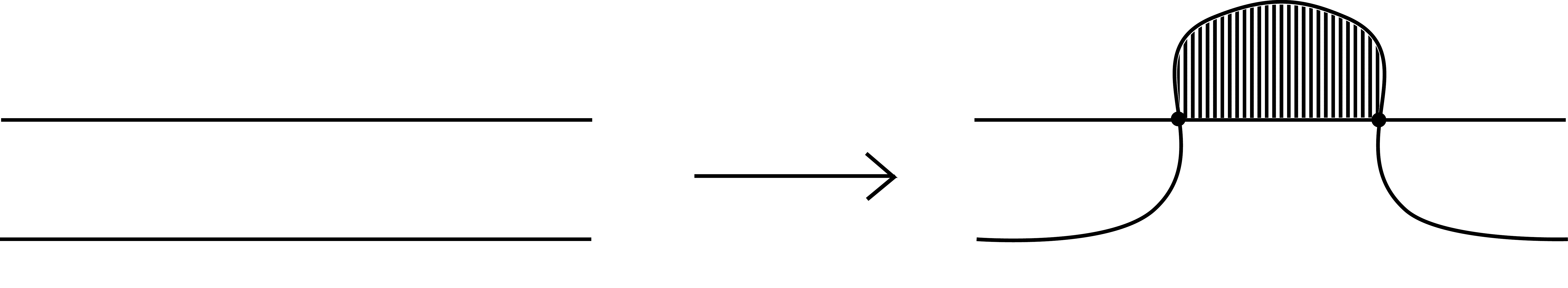   
 \caption{By moving the curve $b$ through a Hamiltonian isotopy we can introduce a pair of 
                                         canceling intersection points.} 
 \label{Isotopy}
\end{figure}
We still have some choices on $\overline S$, in fact by Proposition \ref{prop:admissibility} we may need
to modify the arcs $(h\circ\Phi)(b_i)$ (see Subsection \ref{subsection:holomorphic}) in their isotopy classes to achieve 
admissibility for $D=(\Sigma,\alpha,\beta,w,z)$.
Then the proof rests on the following proposition. 
\begin{prop}
 Suppose that two curves in a Heegaard diagram are related by the move shown in Figure \ref{Isotopy}. Then we can find a map 
 $\Psi_{(D_1,D_2)}$ as in 
 Equation \eqref{map}. 
\end{prop}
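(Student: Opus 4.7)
The plan is to construct $\Psi_{(D_1,D_2)}$ by the standard holomorphic-triangle machinery of Ozsv\'ath and Szab\'o. First I would form the triple Heegaard diagram $(\Sigma,\alpha,\beta^{(1)},\beta^{(2)})$, where the families $\beta^{(1)}$ and $\beta^{(2)}$ come from $D_1$ and $D_2$ respectively and differ only by the local Hamiltonian isotopy of the single curve shown in Figure \ref{Isotopy}. After a small perturbation, each pair of corresponding curves in $\beta^{(1)}$ and $\beta^{(2)}$ meets transversely in two points, so $(\Sigma,\beta^{(1)},\beta^{(2)},w)$ is a standard Heegaard diagram for a connected sum of copies of $S^1\times S^2$, and in particular it admits a canonical top-graded cycle $\Theta\in\T_{\beta^{(1)}}\cap\T_{\beta^{(2)}}$ that generates the top Maslov and Alexander degree.

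Then I would define the candidate map by counting holomorphic triangles:
\[
\Psi_{(D_1,D_2)}(x) = \sum_{y\in\T_{\alpha}\cap\T_{\beta^{(2)}}}\;\sum_{\substack{\phi\in\pi_2(x,\Theta,y)\\ \mu(\phi)=0,\;n_z(\phi)=0}} \#\mathfrak M(\phi)\cdot U^{n_w(\phi)}\cdot y,
\]
and extend it $\F[U]$-linearly. Standard degeneration analysis of the ends of 1-dimensional moduli spaces of triangles (the triangle associativity identity of \cite{Ozsvath1}) shows that $\Psi_{(D_1,D_2)}$ is a chain map. It respects the $\text{Spin}^c$ decomposition, and because $\Theta$ sits in Maslov degree $0$ and Alexander degree $0$ relative to the canonical grading on the $\beta^{(1)}$-$\beta^{(2)}$ complex, the map preserves the bigrading. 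To show it is a quasi-isomorphism, I would use the usual composition argument: reversing the roles of $D_1$ and $D_2$ yields a candidate inverse $\Psi_{(D_2,D_1)}$, and the composition $\Psi_{(D_2,D_1)}\circ\Psi_{(D_1,D_2)}$ is chain-homotopic to the identity through a count of holomorphic quadrilaterals in the quadruple diagram $(\Sigma,\alpha,\beta^{(1)},\beta^{(2)},\beta^{(1)})$, again by a standard continuation-map argument.

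The main obstacle is verifying the equality $\Psi_{(D_1,D_2)}(\mathfrak L(D_1))=\mathfrak L(D_2)$. Here the key observation is that by Proposition \ref{prop:admissibility} the isotopy in question can be localized inside $\overline{S_{-1}}$, while the cycle $\mathfrak L(D_i)$ lies entirely on the page $S_1$. In particular, the $S_1$-coordinates of $\mathfrak L(D_1)$ and $\mathfrak L(D_2)$ coincide, and near the finger-move region there is a canonical small embedded triangle with vertices at $\mathfrak L(D_1)$, the local component of $\Theta$, and $\mathfrak L(D_2)$; this triangular domain avoids all basepoints in $z$ and $w$. A transversality and energy (area) argument, together with positivity of intersection, would show that this small triangle is the unique contribution to $\Psi_{(D_1,D_2)}(\mathfrak L(D_1))$, and that it counts as $1$. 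Any other hypothetical triangle $\phi$ with $n_z(\phi)=0$ would have to have strictly positive multiplicity somewhere that is geometrically forbidden by the fact that $\mathfrak L(D_1)$ is the unique $S_1$-intersection point; this is the step I would write out carefully, as it is the only non-formal ingredient. Granting it, $\Psi_{(D_1,D_2)}(\mathfrak L(D_1))=\mathfrak L(D_2)$, and the proposition follows.
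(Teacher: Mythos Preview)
Your approach is correct but differs from the paper's. The paper does not build $\Psi_{(D_1,D_2)}$ by counting holomorphic triangles; instead it invokes the continuation map associated to a Hamiltonian isotopy of the surface, as in \cite{Ozsvath1} Subsection~7.3. Because the finger move is supported entirely in $\overline{S_{-1}}$ and the coordinates of $\mathfrak L(D_i)$ all sit in $S_1$, the time-dependent flow never moves those intersection points, and the only contribution to the continuation map at $\mathfrak L(D_1)$ is the constant strip; any nonconstant contribution would have to pass through the region where the isotopy is supported, hence through $\overline{S_{-1}}$, and would necessarily hit a $z$-basepoint. This makes the verification of $\Psi_{(D_1,D_2)}(\mathfrak L(D_1))=\mathfrak L(D_2)$ essentially immediate, which is why the paper's proof is two sentences.

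Your triangle argument also works and has the virtue of being uniform with the handleslide case, but note a small inaccuracy in your localization: the canonical small triangles you need are \emph{not} near the finger-move region. Since the finger move lives in $\overline{S_{-1}}$ and all components of $\mathfrak L(D_1)$, $\mathfrak L(D_2)$ lie in $S_1$, every one of the $2g+l+n-2$ small triangles sits in $S_1$, in the thin strip between $b_i$ and its Hamiltonian push-off. The uniqueness step you flag as ``the only non-formal ingredient'' is then exactly the same area/positivity argument used for the contact class in \cite{Matic}: any competing triangle with $n_z=0$ would need nonnegative multiplicities on a domain in $S_1$ whose boundary constraints force it to be the union of small triangles. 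So your outline goes through, but the paper's continuation-map route sidesteps this bookkeeping entirely.
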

\begin{proof}
 The map $\Psi_{(D_1,D_2)}$ is constructed using a Hamiltonian diffeomorphism of the surface, as described in \cite{Ozsvath1} 
 Subsection 7.3. Since the new disks appear in  
 $\overline S$, we have that $\mathfrak L(D_1)$ is sent to $\mathfrak L(D_2)$ which lie both in $S$.
\end{proof}

\subsubsection{Admissible arc slides}
An arc slide $\{...,a_i,...,a_j,...\}\rightarrow\{...,a_i+a_j,...,a_j,...\}$ in $S$ corresponds to a handleslide 
$\{...,\alpha_i,...,\alpha_j,...\}\rightarrow\{...,\alpha_i',...,\alpha_j,...\}$ in $\Sigma$, where 
$\alpha_i'=a_i+a_j\cup h(a_i+a_j)\subset\Sigma$, again see Subsection \ref{subsection:holomorphic}.
Thus, a chain map $\Psi_{(D_1,D_2)}$, which induces an isomorphism in homology, is obtained by counting holomorphic triangles, 
as explained in \cite{Ozsvath}
Subsection 6.3 and Section 7. The admissibility of the arc slide is required only to avoid crossing a basepoint in $w$. 
Remember that for every arc slide we actually have two handleslides; in fact we need to slide both the $\alpha$ and the
$\beta$ curves.
The fact that $\Psi_{(D_1,D_2)}(\mathfrak L(D_1))=\mathfrak L(D_2)$ follows from Lemma 3.5 in \cite{Matic}; where the 
arc slides invariance is proved in 
the open books setting.

\subsubsection{Positive stabilizations}
At this point, in order to complete the proof of the invariance of $[\mathfrak L(D)]$, it would be enough to define 
$\Psi_{(D,D^+)}$, such that 
$\Psi_{(D,D^+)}(\mathfrak L(D))=\mathfrak L(D^+)$, in the case where $D$ and $D^+$ are obtained from an open book and one of 
its positive stabilizations.
Nevertheless, this is not what we prove. In fact, we define $L$-\emph{elementary positive stabilizations} as the ones such that the
curve $\gamma$, which is the curve 
in the page $S'$ that we used to perform the stabilization (see Figure \ref{Stabilization}), intersects the link $L$ 
(that sits in $S$ and then also in $S'$) in
at most one point transversely. Then what we actually prove is the existence of $\Psi_{(D,D^+)}$ for an $L$-elementary 
stabilization. To do this 
we only need the following theorem; which is a modification of Giroux's Theorem \ref{teo:Giroux} and whose
proof is explained in Section 4 in \cite{Etnyre}.
\begin{teo}
 \label{teo:Giroux_L}
 If $(B_i,\pi_i)$ are two open book decompositions, compatible with the triple $(L,M,\xi)$, then they admit isotopic 
 $L$-elementary stabilizations. Namely, there is
 another compatible open book $(B,\pi,A)$ which is isotopic to both $(B_1,\pi_1)$ and $(B_2,\pi_2)$, after an 
 appropriate 
 sequence of $L$-elementary stabilizations.
\end{teo}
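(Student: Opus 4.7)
The plan is to bootstrap from Giroux's theorem (Theorem \ref{teo:Giroux}) and then reduce any non-$L$-elementary stabilization appearing there to a composition of $L$-elementary ones. As a first step, I would apply Theorem \ref{teo:Giroux} directly to $(B_1,\pi_1)$ and $(B_2,\pi_2)$, viewed as open books merely supporting $(M,\xi)$ with $L$ lying on a common page. This produces, after some sequence of ordinary positive stabilizations on each side, isotopic open books supporting $(M,\xi)$ and still containing $L$ on the page. The only issue is that the intermediate stabilization curves $\gamma$ produced by Giroux's theorem may meet $L$ in an arbitrary number of transverse points and are not a priori $L$-elementary.

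The core step is to show that a single positive stabilization performed along a curve $\gamma$ with $\#(\gamma\cap L)=k\geq 2$ transverse intersections can be rewritten as a finite sequence of $L$-elementary stabilizations. I would argue by induction on $k$. Pick a point $p\in \gamma\cap L$ and first insert an auxiliary $L$-elementary stabilization whose curve $\gamma_0$ is supported in a small disk on the page centered at $p$ and meets $L$ only at $p$. The resulting page acquires an extra 1-handle $H_0$ that is disjoint from the rest of $\gamma$. Using $H_0$, one can then isotope the remaining portion of $\gamma$ through the new handle so as to remove the crossing at $p$, at the cost of modifying the monodromy by a Dehn twist whose support is disjoint from the support of the original stabilizing twist, so that the two twists commute. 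The new stabilization curve now has $k-1$ transverse intersections with $L$, and the induction proceeds. The non-transverse case, where $\gamma$ and $L$ share arcs, is handled by a preliminary small isotopy of $\gamma$ that leaves the associated open book unchanged up to isotopy.

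The main obstacle is verifying that each intermediate open book is compatible with $(L,M,\xi)$ in the strong sense of Section \ref{subsection:reduced}, not merely with $(M,\xi)$: the link $L$ must remain Legendrian on the new page at every stage, and the contact framing of $L$ must still agree with the page framing as in Theorem \ref{teo:reduced}. This requires invoking the Legendrian realization theorem from \cite{LOSS} after each 1-handle attachment to isotope the surface back into a position where $L$ is realized Legendrianly on it, without changing the Legendrian isotopy class of $L$. The fact that such realizations can be arranged consistently along the whole sequence, and that the system of generators can be updated compatibly under each $L$-elementary stabilization (by adjoining the cocore arc as in Subsection \ref{subsection:abstract}), is essentially the content of Section 4 of \cite{Etnyre}, to which I would refer for the detailed geometric construction.
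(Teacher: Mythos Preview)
The paper does not supply its own proof of this theorem; it simply refers to Section~4 of \cite{Etnyre}. That argument does not bootstrap from the \emph{statement} of Giroux's theorem as you propose, but revisits its \emph{proof}: one works with contact cell decompositions of $(M,\xi)$ whose $1$-skeleton already contains $L$, and observes that the stabilizations arising in Giroux's argument (which correspond to refinements of the cell decomposition) can be chosen so that each stabilizing arc meets $L$ at most once. The $L$-elementary condition is thus built into the construction from the outset, rather than achieved after the fact.

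Your reduction strategy has a genuine gap. Even granting the inductive step, replacing a single non-$L$-elementary stabilization by a sequence of $L$-elementary ones does not produce the \emph{same} open book: the page acquires extra $1$-handles and the monodromy extra Dehn twists. Hence, after you carry out these replacements along each of the two Giroux sequences from $(B_1,\pi_1)$ and $(B_2,\pi_2)$ to their common ordinary stabilization, the two resulting $L$-elementary sequences no longer terminate at the same open book, and you are back to the original problem with no evident reduction in complexity. There is also a local issue in your inductive step: a stabilizing arc must have both endpoints on $\partial S$, so it cannot be ``supported in a small disk on the page centered at $p$'' when $p\in L$ lies in the interior of the page. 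Once you instead take an arc $\gamma_0$ running from $\partial S$ through $p$, the curves $\gamma_0$ and $\gamma$ will typically intersect, so the Dehn twists $D_{\gamma_0}$ and $D_{\gamma}$ need not commute as you assert.
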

Since we have already proved invariance under admissible arc slides, we can complete the open books $(B,\pi)$ and 
$(B^+,\pi^+)$
with every possible adapted system of generators and then
eventually define the map $\Psi_{(D,D^+)}$.
\begin{prop}
Let us consider the page $S=S_{g,l}=\overline{\pi^{-1}(1)}\supset L$ of $(B,\pi)$. Then we can always find $A$, an adapted system of 
generators for $L$ in $S$, with the property that $A$ is disjoint from the arc $\gamma'=\gamma\cap S$; where $\gamma$ is the 
curve that we used to perform the 
$L$-elementary stabilization. 
\end{prop}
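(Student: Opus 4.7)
Since the stabilization is $L$-elementary, $\gamma':=\gamma\cap S$ is a properly embedded arc in $S$ meeting $L$ transversely in at most one point. My plan is to build $A$ entirely inside the cut surface $\hat{S}:=S\setminus\nu(\gamma')$, so the resulting arcs are automatically disjoint from $\gamma'$.

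First I would handle the case $\gamma'\cap L=\emptyset$. Then $L\subset\hat{S}$, and the components of $L$ remain $\F$-linearly independent in $H_1(\hat{S};\F)$, since the inclusion-induced map $H_1(\hat{S};\F)\to H_1(S;\F)$ sends $[L_i]$ to $[L_i]$ and these are independent by hypothesis. I would then run the construction in the proof of Theorem \ref{teo:reduced} entirely inside $\hat{S}$: choose distinguished arcs $a_1,\ldots,a_n$, each a short arc crossing one $L_i$ transversely and then extended to the boundary; enlarge $A^1=\{a_1,\ldots,a_n\}$ to a family $A^1\sqcup A^2$ projecting to a basis of $H_1(S,\partial S;\F)$; and finally add the separating arcs $A^3$ running parallel to the $L_i$'s. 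Any endpoint that lands on one of the two copies $\gamma'_\pm\subset\partial\hat{S}$ of $\gamma'$ can be slid along $\partial\hat{S}$, past the corners where $\gamma'_\pm$ meets $\partial S$, until it rests on $\partial S$; this slide is an isotopy inside $\hat{S}$ and preserves disjointness from $\gamma'$.

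In the remaining case, $\gamma'$ meets $L$ transversely at one point, lying on some component $L_k$. Here I would first take $a_k$ to be a parallel push-off of $\gamma'$ in $S$: a properly embedded arc disjoint from $\gamma'$ which still crosses $L_k$ exactly once. Then $S\setminus\nu(\gamma'\cup a_k)$ contains the $(n-1)$-component link $L\setminus L_k$ with independent components, and applying the argument of the previous paragraph to this cut surface and this sublink yields the remaining arcs $\{a_i\}_{i\neq k}$, $A^2$, and $A^3$, all disjoint from $\gamma'\cup a_k$. Combining with $a_k$ gives the desired $A$.

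The main obstacle is the homological bookkeeping: $A^1\sqcup A^2$ must be a basis of $H_1(S,\partial S;\F)$, not of the corresponding group for $\hat{S}$. The key point is that every class in $H_1(S,\partial S;\F)$ admits a representative disjoint from $\gamma'$—pairs of transverse intersections with $\gamma'$ can be removed by homotopies across bigons bounded by $\gamma'$ and sub-arcs of the representative, and in the parity-obstructed case one can add a parallel push-off of $\gamma'$ itself. Consequently the classes coming from proper arcs in $\hat{S}$ span all of $H_1(S,\partial S;\F)$, so $A^1$ can always be completed to such a basis while remaining in $\hat{S}$. Routing $A^3$ is then straightforward since its arcs run parallel to components of $L$ and to the distinguished arcs, both of which already avoid $\gamma'$; the component-count condition on $S\setminus(a_1\cup\cdots\cup a_n)$ follows from the same reasoning used in the proof of Theorem \ref{teo:reduced}, now performed inside the cut surface.
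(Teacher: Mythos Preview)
Your approach is genuinely different from the paper's. The paper argues by a four-way case split on whether $\gamma'$ meets $L$ and whether it separates $S$; in each case it takes a push-off of $\gamma'$ itself to be one specific member of $A$ (the distinguished arc $a_1$ in case (a), an arc of $A^2$ in case (b), a separating arc in case (c), or nothing in case (d)), and then completes to an adapted system via Theorem~\ref{teo:reduced}. Since the remaining arcs are disjoint from this push-off, they can be taken disjoint from $\gamma'$ as well. You instead try to carry out the whole construction inside the cut surface $\hat S=S\setminus\nu(\gamma')$.

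Your first case ($\gamma'\cap L=\emptyset$) is essentially fine; the spanning claim for $H_1(S,\partial S;\F)$ by arcs avoiding $\gamma'$ is correct, and the endpoint slides are harmless. But your second case has a genuine gap. After choosing $a_k$ as a push-off of $\gamma'$, you apply the first case to the sublink $L\setminus L_k$ inside $S\setminus\nu(\gamma'\cup a_k)$ and then adjoin $a_k$. The trouble is twofold. First, the remnant of $L_k$ is still an arc in that cut surface, and nothing in your construction prevents the new arcs from crossing it; yet an adapted system for $L$ requires $a_i\cap L_k=\emptyset$ for every $i\neq k$. Second, this cannot be repaired by simply demanding that the new arcs also avoid $L_k$: if every arc in the basis $A'^1\sqcup A'^2$ you build for $L\setminus L_k$ were disjoint from $L_k$, each would pair to zero with the nonzero class $[L_k]\in H_1(S;\F)$ under the perfect pairing $H_1(S,\partial S;\F)\times H_1(S;\F)\to\F$, so they could not span $H_1(S,\partial S;\F)$. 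Thus an adapted system for $L\setminus L_k$ in $S$ whose arcs all miss $L_k$ does not exist, and ``combining with $a_k$'' cannot yield the desired $A$.

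The paper sidesteps this in its case (a) by not reducing to $L\setminus L_k$ at all: after fixing $a_1$ parallel to $\gamma'$, it adds one or two further arcs $a'$ (and possibly $a''$), each a push-off of $L_1$ extended along $a_1$, chosen precisely so that $L_1$ is isolated in its own region before the rest of $A$ is built. That isolation step is the idea missing from your argument.
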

\begin{proof}
 We have to study four cases:
 \begin{enumerate}[a)]
  \item the arc $\gamma'$ intersects $L$ (transversely in one point);
  \item the intersection $\gamma'\cap L$ is empty and $\gamma'$ does not disconnect $S$;
  \item the intersection $\gamma'\cap L$ is empty, the arc $\gamma'$ disconnects $S$ and $L$ is not contained in one of the two 
       resulting connected components of $S$;
  \item the intersection $\gamma'\cap L$ is empty, the arc $\gamma'$ disconnects $S$ and the link $L$ lies in one of the 
       resulting connected components of $S$. 
 \end{enumerate}
Let us start with Case a). We observe that $\gamma'$ does not disconnect $S$; in fact if this was the case then a component of 
$L$ would be split inside the two
resulting components of $S$, thus we would have at least another intersection point between $L$ and $\gamma'$; which is 
forbidden. We define $A=\{a_1,...,a_{2g+l+n-2}\}$
as follows. Take $a_1$ as a push-off of $\gamma'$; clearly $a_1$ is a distinguished arc, because it intersects $L_1$, a 
component of $L$, in one point.
Now call $a'$ the arc given by taking a push-off of $L_1$ and extend it 
through $a_1$, on one side of $L_1$; this is the same procedure described in the proof of Theorem \ref{teo:reduced}. If $a'$ 
disconnects $S$ into $S_1,S_2$ 
and there are components of $L$ that lie in both $S_i$
then we take $a'$ as a living arc; thus we extend $\{a_1,a'\}$ to an adapted system of generators $A$ by using 
Theorem \ref{teo:reduced}. On the other hand, if $L$ is contained in $S_2$ and $S_1$ is empty then we consider $\{a_1,a''\}$; 
where $a''$ is another push-off of $L_1$,
this time extended through $a_1$ on the other side of $L_1$. We still have problems when $a'$ does not disconnect $S$. We can 
fix this by taking
$\{a_1,a',a''\}$; which together disconnect $S$ into two connected components, one of them containing only $L_1$ and the other 
one $L\setminus L_1$. Again
we extend the set $\{a_1,a',a''\}$ to $A$ applying Theorem \ref{teo:reduced}.

The other three cases are easier. In Case b) we just denote with $a_{n+1}$ the push-off of $\gamma'$ and we complete it to $A$.

In Case c) we denote with $a_{2g+l}$ our push-off of $\gamma'$ and we take it as a living arc. Then we can complete it to
$A$.

Finally, Case d) is as follows. Since in this case the push-off is trivial in homology, and it bounds a surface disjoint from 
$L$, we can actually ignore it and easily 
find a set $A$ which never intersects $\gamma'$. 
\end{proof}
Now we have $(S,\Phi,A,z,w)$ the abstract open book obtained from $(B,\pi,A)$. Denote with $(S^+,\Phi^+,A^+,z,w)$ the 
one coming from $(B^+,\pi^+,A^+)$,
where $S^+=\overline{\left(\pi^+\right)^{-1}(1)}$ is $S$ 
with a 1-handle attached; 
$\Phi^+=\Phi'\circ D_{\gamma}$ where $\Phi'$ coincides with $\Phi$ on $S$, extended with the 
Identity on the new 1-handle; and 
$A^+=A\cup\{a\}$ with $a$ being the cocore of the new 1-handle (see Figure \ref{Stabilization}).
Then we call $D$ and $D^+$ the corresponding Legendrian Heegaard diagrams.

We define $\Psi_{(D,D^+)}$ in the following way. For every
$x\in\T_{\alpha}\cap\T_{\beta}\lvert_{\mathfrak t_{\mathfrak L(D)}}$ one has $\Psi_{(D,D^+)}(x)=x'$; where
$x'=x\cup\{a\cap b\}$ with $b$ being the arc in strip position with $a$, as in Figure \ref{Strip}. It results that 
$\Psi_{(D,D^+)}$ is a chain map because the curve $\alpha=b\cup(h\circ\Phi)(b)$
only intersects the curve $\beta=a\cup h(a)$, and moreover one has $\alpha\cap\beta=\{1\text{ pt}\}$, 
since we choose $A^+$ in a way that every arc in it is disjoint from $\gamma'$. We have that 
$\Psi_{(D,D^+)}$ induces an isomorphism in homology, because it is an isomorphism also on the level of chain complexes,
and sends $\mathfrak L(D)$ into $\mathfrak L(D^+)$.

\subsection{Proof of the main theorem}
\label{subsection:proof}
We prove our main result which defines the Legendrian invariant $\mathfrak L$.
\begin{proof}[Proof of Theorem \ref{teo:main}]
 We proved that $\mathfrak L(D)$ is a cycle in Proposition \ref{prop:intersection}. 
 The invariance follows from the results obtained in this section: in fact, we proved that if $D_1$ and $D_2$ are Legendrian
 Heegaard diagrams, representing Legendrian isotopic links, then we have a chain map $\Psi_{(D_1,D_2)}$ that preserves
 the bigrading and the $\text{Spin}^c$ structure and sends $\mathfrak L(D_1)$ to $\mathfrak L(D_2)$.  
 
 Now to see which is the corresponding 
 $\text{Spin}^c$ structure we recall that
 $$\mathfrak s_w(\mathfrak L(D))-\mathfrak s_z(\mathfrak L(D))=\text{PD}[L]$$ from 
 Lemma 2.19 in \cite{Ozsvath1}. Then we have 
 $$\mathfrak t_{\mathfrak L(D)}=\mathfrak s_w(\mathfrak L(D))=\mathfrak s_z(\mathfrak L(D))+\text{PD}[L]=
 \mathfrak t_{c(D)}=
 \mathfrak t_{\xi}\:.$$
\end{proof}
We note that the invariant can be a $U$-torsion class in the group 
$cHFL^-\left(\overline M,L,\mathfrak t_{\xi}\right)$, which means that there is a $k\geq 0$ such that 
$U^k\cdot\mathfrak L(L,M,\xi)=[0]$. Moreover, the cycle $\mathfrak L(D)$ possesses a bigrading $(d,s)$; such bigrading
is induced on the invariant $\mathfrak L(L,M,\xi)$,
because all the maps $\Psi$ defined in this section preserves both the Maslov and the Alexander
grading.

\section{Properties of \texorpdfstring{$\mathfrak L$}{L} and connected sums}
\label{section:five}
\subsection{Multiplication by \texorpdfstring{$U+1$}{U+1} in the link Floer homology group}
Take a Legendrian Heegaard diagram $D$, obtained from an adapted 
open book decomposition compatible with the triple $(L,M,\xi)$; 
where $M$ is a rational homology 3-sphere
and $L$ is a null-homologous Legendrian $n$-component link.
We have the following surjective $\F$-linear map: \newpage
\[F:cCFL^-(D,\mathfrak t_{\xi})\xlongrightarrow{U=1}\widehat{CF}(D,\mathfrak t_{\xi})\:;\]
which is given by setting $U$ equals to 1.

The map $F$ clearly commutes with the differentials and it is such that $F(\mathfrak L(D))=c(D)$. 
It is well-defined and surjective, because every
intersection point $x$ is such that $F(x)=x$ and
if $s_w(x)=\mathfrak t_{\xi}$ then $s_z(x)=\mathfrak t_{\xi}$, since $L$ is null-homologous. 
Moreover, it respects the gradings in the following sense.
\begin{lemma}
 \label{lemma:1}
 The map $F$ sends an element with bigrading $(d,s)$ into an element with
 Maslov grading $d-2s$.
\end{lemma}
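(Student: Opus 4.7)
The plan is to reduce the statement to showing that for every intersection point $x\in\T_\alpha\cap\T_\beta$ in the $\mathfrak t_\xi$-summand we have the identity
\[
M_w(x)-M_z(x)=2A(x)\:.
\]
Granting this, the lemma follows immediately: a bihomogeneous element in $cCFL^-(D,\mathfrak t_\xi)$ of bigrading $(d,s)$ is of the form $U^k x$ with $M_w(x)=d+2k$ and $A(x)=s+k$, and $F(U^k x)=x$ has Maslov grading $M_z(x)=M_w(x)-2A(x)=d-2s$.

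To prove the identity I would first show that the function $f(x):=M_w(x)-M_z(x)-2A(x)$ is constant on the set of intersection points. This is a standard Whitney-disk computation: for any $\phi\in\pi_2(x,y)$ the formulas
\[
M_w(x)-M_w(y)=\mu(\phi)-2n_w(\phi),\quad M_z(x)-M_z(y)=\mu(\phi)-2n_z(\phi),
\]
together with $A(x)-A(y)=n_z(\phi)-n_w(\phi)$ (which is legitimate because $L$ is null-homologous, so the Alexander grading is defined via $\mathfrak s_{w,z}$ as in Section~\ref{section:three}), combine to give $f(x)=f(y)$. Since the Legendrian Heegaard diagram is admissible by Proposition~\ref{prop:admissibility} and $M$ is a rational homology sphere, any two intersection points representing the same $\text{Spin}^c$ class $\mathfrak t_\xi$ are connected by such disks, so $f$ is a single constant on the summand.

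To pin down the constant I would evaluate $f$ at the distinguished point $\mathfrak L(D)$. By Theorem~\ref{teo:contact} we already know $M_z(\mathfrak L(D))=-d_3(M,\xi)$. Since $\mathfrak L(D)$ lies on the page $S_1$ in a strip, the two 2-plane fields $\pi_w(\mathfrak L(D))$ and $\pi_z(\mathfrak L(D))$ differ only by a modification supported in a neighborhood of $L$; by Turaev's framework for $\text{Spin}^c$ structures together with the formula $\mathfrak s_w(\mathfrak L(D))-\mathfrak s_z(\mathfrak L(D))=\PD[L]$ already used in the proof of Theorem~\ref{teo:main}, the resulting change in $d_3$ invariants equals $2A(\mathfrak L(D))$. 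This yields $M_w(\mathfrak L(D))=-d_3(M,\xi)+2A(\mathfrak L(D))$, so $f(\mathfrak L(D))=0$.

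The main obstacle is the last step, namely computing exactly the $d_3$-difference between $\pi_w(\mathfrak L(D))$ and $\pi_z(\mathfrak L(D))$: it is the only point where one must leave formal domain bookkeeping and handle the three-dimensional geometry of the 2-plane fields attached to the intersection point, matching the normalization of the absolute Alexander grading with Turaev's reconstruction of $\text{Spin}^c$ structures from nowhere-zero vector fields.
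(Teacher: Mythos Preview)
Your reduction to the identity $M_w(x)-M_z(x)=2A(x)$ for intersection points is exactly the right starting point, and it is also where the paper begins. From there, however, you take a long detour that is not needed. The paper proves the identity for \emph{every} $x$ in one line:
\[
M_z(x)=d_3\bigl(\overline M,\pi_z(x)\bigr)=d_3\bigl(\overline M,\pi_w(x)\bigr)-\mathfrak s_{w,z}(x)[F]=M_w(x)-2A(x)\,,
\]
with $F$ a Seifert surface for $L$. The step you single out as the ``main obstacle'' --- computing the $d_3$-difference between $\pi_w$ and $\pi_z$ --- is no easier at $\mathfrak L(D)$ than at a general $x$: the two plane fields differ by the same local modification along $L$ regardless of which intersection point one starts from, and the effect on the Hopf invariant is $\mathfrak s_{w,z}(x)[F]$ in every case. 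Once you accept that computation at one point, the Whitney-disk constancy argument is redundant; conversely, if you can carry it out at $\mathfrak L(D)$ you can carry it out everywhere.

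Note also that your invocation of Theorem~\ref{teo:contact} and the resulting value $M_w(\mathfrak L(D))=-d_3(M,\xi)+2A(\mathfrak L(D))$ play no role: your conclusion $f(\mathfrak L(D))=0$ follows directly from the $d_3$-difference you compute, without ever knowing $M_z(\mathfrak L(D))$ or $M_w(\mathfrak L(D))$ individually. This is fortunate, because Theorem~\ref{teo:contact} is stated only for a single basepoint; with $n$ basepoints the correct value is $M_z(c(D))=-d_3(M,\xi)+1-n$ (cf.\ Lemma~\ref{lemma:2} and Corollary~\ref{cor:vanish}), so both of your intermediate numbers are off by $n-1$ when $n\geq 2$, even though their difference is right.
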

\begin{proof}
 We have that $$M_z(F(x))=d_3\left(\overline M,\mathfrak\pi_z(x)\right)=
 d_3\left(\overline M,\mathfrak\pi_w(x)\right)-\mathfrak s_{w,z}(x)[S]=M(x)-2A(x)\:,$$
 where $S$ is a Seifert surface for $L$.
\end{proof}
The map $F$ induces $F_*$ in homology:
$$F_*:cHFL^-\left(\overline M,L,\mathfrak t_{\xi}\right)
\xlongrightarrow{U=1}\widehat{HF}\left(\overline M,\mathfrak t_{\xi}\right)
\otimes\left(\F_{(-1)}\oplus\F_{(0)}\right)^{\bigotimes(n-1)}\:.$$
Then we can prove the following two lemmas.
\begin{lemma}
 \label{lemma:2}
 The map $F$ defined above is such that $F_*(\mathfrak L(L,M,\xi))=\widehat c(M,\xi)\otimes(\textbf e_{-1})^{\bigotimes(n-1)}$,
 where $\textbf e_{-1}$ is the generator of $\F_{(-1)}$.
\end{lemma}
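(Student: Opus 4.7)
The plan is to first establish the equality at the chain level, then decompose the image using the standard tensor splitting of multi-pointed hat Heegaard Floer homology, and finally pin down the relevant tensor factor via a Maslov grading comparison. For the chain-level step, observe that $\mathfrak L(D) \in cCFL^-(D, \mathfrak t_\xi)$ and $c(D) \in \widehat{CF}(D, \mathfrak t_\xi)$ are by definition the same intersection point---the unique one on the page $S_1$---and $F$ acts as the identity on generators, only the coefficient ring being affected by the substitution $U = 1$. Hence $F(\mathfrak L(D)) = c(D)$ at the chain level, and passing to homology yields $F_*[\mathfrak L(D)] = [c(D)]$ in $\widehat{HF}(D, \mathfrak t_\xi)$.

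I would then invoke the standard Ozsv\'ath--Szab\'o stabilization result, giving an isomorphism of bigraded $\F$-vector spaces
\[
\widehat{HF}(D, \mathfrak t_\xi) \;\cong\; \widehat{HF}\bigl(\overline M, \mathfrak t_\xi\bigr) \;\otimes\; V^{\otimes (n-1)}, \qquad V = \F_{(-1)} \oplus \F_{(0)},
\]
where each of the $n-1$ ``extra'' basepoints contributes a tensor factor of $V$. To identify the $\widehat{HF}(\overline M, \mathfrak t_\xi)$-component of $[c(D)]$, I would reduce to the single-basepoint setting by iteratively merging pairs of basepoints via admissible handleslides. Under such a reduction $c(D)$ transports to the single-basepoint contact cycle, which by Theorem \ref{teo:contact} represents $\widehat c(M, \xi)$; this forces $F_*[\mathfrak L(D)] = \widehat c(M, \xi) \otimes v$ for a unique $v \in V^{\otimes (n-1)}$.

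To pin down $v$ I would match Maslov gradings. By Lemma \ref{lemma:1} the left-hand side has Maslov grading $M(\mathfrak L(D)) - 2A(\mathfrak L(D))$, while by Theorem \ref{teo:contact} the grading of $\widehat c(M, \xi)$ equals $-d_3(M, \xi)$. Equating forces
\[
\deg_M(v) \;=\; M(\mathfrak L(D)) - 2A(\mathfrak L(D)) + d_3(M, \xi),
\]
and since $(\mathbf{e}_{-1})^{\otimes(n-1)}$ is the unique element of Maslov grading $-(n-1)$ in $V^{\otimes(n-1)}$, the claim reduces to showing $M(\mathfrak L(D)) - 2A(\mathfrak L(D)) = -d_3(M, \xi) - (n-1)$. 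This bigrading identity is the principal obstacle; I would address it by induction on $n$, the base case $n = 1$ being immediate from Theorem \ref{teo:contact} (where $V^{\otimes 0} = \F$ and $A(\mathfrak L(D)) = 0$), and the inductive step carried out by comparing an adapted open book for $(L, M, \xi)$ with one obtained by adding a small Legendrian unknot component near the binding, verifying on the resulting Heegaard diagram that $M - 2A$ of the canonical cycle drops by exactly $1$ with each added component.
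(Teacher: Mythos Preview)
Your argument has a genuine gap at the step where you claim that the basepoint-merging reduction ``forces $F_*[\mathfrak L(D)] = \widehat c(M,\xi) \otimes v$ for a unique $v$''. Knowing that $c(D)$ transports to the single-basepoint contact cycle under some projection $\widehat{HF}(D,\mathfrak t_\xi)\to\widehat{HF}(\overline M,\mathfrak t_\xi)$ only pins down one coordinate of $[c(D)]$ in the tensor decomposition; a general element of $\widehat{HF}(\overline M,\mathfrak t_\xi)\otimes V^{\otimes(n-1)}$ is a sum $\sum_i h_i\otimes v_i$, not a pure tensor, so this step does not follow. Moreover, when $\widehat c(M,\xi)=0$ the lemma asserts that $F_*(\mathfrak L)$ vanishes, and your grading argument says nothing in that case.

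Your bigrading computation also has problems. The parenthetical ``$A(\mathfrak L(D))=0$'' in the base case $n=1$ is false: for a Legendrian knot this Alexander grading is $(\tb-\rot+1)/2$, not zero. More seriously, the identity you need, $M(\mathfrak L(D))-2A(\mathfrak L(D))=-d_3(M,\xi)-(n-1)$, is exactly Corollary~\ref{cor:vanish}, which in the paper is \emph{derived from} the present lemma; your proposed induction would have to establish it independently. But the inductive step you sketch compares $L$ with $L$ plus an extra unknot component, which lets you pass from $n$ to $n+1$, not the other way---so it never reduces the question for the given $n$-component link $L$ to the knot case.

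The paper sidesteps both issues by tracking the class directly rather than computing gradings: it passes from $D$ to a diagram $D_n$ for the standard Legendrian $n$-unlink via the invariance maps $\Psi$ of Section~\ref{section:four} (which by construction carry $c(D)$ to $c(D_n)$), and then reads off the tensor factor explicitly in $D_n$, where each added stabilization visibly contributes one $\textbf e_{-1}$.
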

\begin{proof}
 Denote with $D_1$ a Legendrian Heegaard diagram for the standard Legendrian unknot in $(M,\xi)$. If we perform $n$
 consecutive stabilizations on 
 $D_1$ then we easily obtain a diagram $D_n$ for the standard Legendrian $n$-component unlink. 
 
 Since $D$ and $D_n$ are Legendrian Heegaard diagrams for the same contact manifold $(M,\xi)$ and they have the same 
 number of basepoints, from \cite{Ozsvath1} we know that
 \begin{equation}
  \label{multi}
  \widehat{HF}(D,\mathfrak t_{\xi})\cong\widehat{HF}\left(\overline M,\mathfrak t_{\xi}\right)\otimes\left(\F_{(-1)}
  \oplus\F_{(0)}\right)^{\bigotimes(n-1)}\:.
 \end{equation}
 Moreover, a little variation of the maps $\Psi$ that we define in Section \ref{section:four} tells us that the
 isomorphism in Equation \eqref{multi} also identifies the class $[c(D)]$ with  
 $\widehat c(M,\xi)\otimes(\textbf e_{-1})^{\bigotimes(n-1)}$.
\end{proof}
Before proving the second lemma, we recall that, 
since the link Floer homology group is an 
$\F[U]$-module and $\F[U]$ is a principal ideal domain, we have 
$cHFL^-\left(\overline M,L,\mathfrak t_{\xi}\right)\cong\F[U]^r\oplus T$; where $r$ is
an integer and $T$ is the torsion $\F[U]$-module.
\begin{lemma}
 \label{lemma:map}
 The following two statements hold:
  \begin{enumerate}
   \item $F(x)=0$ and $x$ is homogeneous with respect to the Alexander grading if and only if $x=0$;
   \item $F_*[x]=[0]$ and $[x_i]$ is homogeneous with respect to the Alexander grading, for every $i=1,...,r$, if
         and only if $[x]$ is torsion.
         Here the $[x_i]$'s are a decomposition of $[x]$ in the torsion-free quotient of 
         $cHFL^-\left(\overline M,L,\mathfrak t_{\xi}\right)$.
  \end{enumerate}
\end{lemma}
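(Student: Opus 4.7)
The plan is to treat the two statements separately, exploiting that $F$ and $F_*$ are essentially "set $U=1$" maps and that $cCFL^-$ is free as an $\F[U]$-module on the intersection points.

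For part (1), the key observation is that any homogeneous element $x$ of Alexander grading $s$ in $cCFL^-(D,\mathfrak{t}_\xi)$ is uniquely of the form $x = \sum_y c_y\, U^{A(y)-s}\, y$ with $c_y \in \F$, the sum being over intersection points $y$ with $A(y) \geq s$, because $U$ lowers the Alexander grading by one and so the exponent attached to each generator $y$ in the support is forced. Applying $F$ sets $U=1$ and yields $F(x) = \sum_y c_y\, y$ in $\widehat{CF}(D,\mathfrak{t}_\xi)$, where the intersection points form an $\F$-basis. Hence $F(x) = 0$ forces every $c_y = 0$, and so $x = 0$; the reverse implication is immediate.

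For part (2), I would first set up the short exact sequence of $\F[U]$-modules
\[
0 \longrightarrow cCFL^-(D,\mathfrak{t}_\xi) \xlongrightarrow{U-1} cCFL^-(D,\mathfrak{t}_\xi) \xlongrightarrow{F} \widehat{CF}(D,\mathfrak{t}_\xi) \longrightarrow 0,
\]
whose exactness relies on $cCFL^-$ being free over $\F[U]$ and on $F$ being reduction modulo $(U-1)$. The induced long exact sequence in homology identifies $\ker F_*$ with $\mathrm{Image}\bigl((U-1)\colon cHFL^- \to cHFL^-\bigr)$. For the torsion direction, if $[x]$ is torsion then each $[x_i] = 0$ (vacuously homogeneous); moreover $(U-1)$ is coprime to $U^k$ in $\F[U]$ and therefore acts invertibly on the torsion summand $T$, so $[x] \in \mathrm{Image}(U-1)$ and $F_*[x] = 0$.

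For the converse, assume $F_*[x] = 0$ and each $[x_i]$ is homogeneous, and write $[x] = (U-1)[y]$ with $[y] = \sum_i q_i(U) g_i + t'$, for homogeneous generators $g_i$ of the free summands. Projecting to the torsion-free quotient gives $[x_i] = (U-1) q_i(U)\, g_i$; since $g_i$ is homogeneous, homogeneity of $[x_i]$ forces $(U-1) q_i(U)$ to be a (possibly zero) monomial in $\F[U]$. A direct coefficient comparison shows that in $\F[U]$ the product $(U-1) q(U)$ is a nonzero monomial only when $q = 0$, hence every $q_i = 0$, the free part of $[y]$ vanishes, $[y]$ is torsion, and so is $[x]$. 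The main obstacle I foresee is precisely this last polynomial rigidity: the fact that $(U-1) q(U)$ is never a nonzero monomial in characteristic two is what upgrades the homogeneity hypothesis to the vanishing of the free part, turning the exact-sequence datum "$[x] = (U-1)[y]$" into the desired conclusion that $[x]$ is torsion; without it the long exact sequence alone would only give $[x] \equiv 0 \pmod{U-1}$.
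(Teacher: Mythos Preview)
Your proof is correct and follows essentially the same approach as the paper's. The paper also reduces both parts to the observation that $(1+U)\lambda(U)$ cannot be a nonzero monomial in $\F[U]$; the only difference is that the paper states $[x]=(1+U)[z]$ directly, whereas you justify this step explicitly via the long exact sequence associated to the short exact sequence $0\to cCFL^-\xrightarrow{U+1}cCFL^-\xrightarrow{F}\widehat{CF}\to 0$, which is a welcome clarification rather than a different idea.
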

\begin{proof}
 \begin{enumerate}
  \item The if implication is trivial. For the only if, suppose that $F(x)=0$; this gives that \newpage
        \[x=(1+U)\lambda_1(U)y_1+...+(1+U)\lambda_t(U)y_t\:,\]
        where $\lambda_i(U)\in\F[U]$ for every $i=1,...,t$  and $y_1,...,y_t$ are all the intersection points that induce the 
        $\text{Spin}^c$ structure 
        $\mathfrak t_{\xi}$.
        
        Since each $y_i$ is homogeneous and the monomial $U$ drops the Alexander grading by one, we have that $\lambda_i(U)=0$ 
        for every $i=1,...,t$ and then $x=0$.  
  \item Again the if implication is trivial. Now we have that $$[x]=\sum_{i=1}^r[x_i]+[x]_T\:,$$ where $[x]_T$ is the 
        projection of $[x]$ on the torsion submodule $T$.
        Since $F_*[x]=[0]$, we have \[[x]=(1+U)[z]=(1+U)\lambda'_1(U)[z_1]+...+(1+U)\lambda'_r(U)[z_r]+[x]_T\:;\] where one has
        $[x_i]=(1+U)\lambda'_i(U)[z_i]$ for 
        every $i=1,...,r$ and the $[z_i]'s$ are a homogeneous basis of the torsion-free quotient.
        
        The same argument of 1 implies that the polynomials $\lambda'_i(U)$ are all zero and then $[x]=[x]_T$.
 \end{enumerate}
\end{proof}
Now we use Lemma \ref{lemma:map} to show that there is a correspondence between the torsion-free quotient of the link Floer 
homology group and the multi-pointed hat Heegaard Floer homology.
\begin{teo}
 \label{teo:iso}
 If $L$ is a Legendrian $n$-component link in $(M,\xi)$ then there exists an isomorphism of $\F[U]$-modules 
 $$\begin{aligned}
    \dfrac{cHFL^-\left(\overline M,L,\mathfrak t_{\xi}\right)}{T}\longrightarrow &\\
    \left(\widehat{HF}\left(\overline M,\mathfrak t_{\xi}\right)\otimes_{\F}\F[U]\right)&
    \otimes_{\F[U]}\left(\F[U]_{(-1)}\oplus\F[U]_{(0)}\right)^{\bigotimes(n-1)}\:;
   \end{aligned}$$ which sends a homology class of bigrading $(d,s)$ into one of Maslov grading $d-2s$.   
\end{teo}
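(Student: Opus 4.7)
The plan is to compare both sides via their $(U+1)$-reductions. By the structure theorem for finitely generated modules over the PID $\F[U]$, the torsion-free quotient $cHFL^-/T$ is a graded free $\F[U]$-module, as is the target $V$ on the right-hand side. A graded free $\F[U]$-module is determined up to graded isomorphism by the grading distribution of its $(U+1)$-quotient, so it suffices to produce an isomorphism of graded $\F$-vector spaces between the two $(U+1)$-reductions respecting the correspondence $(d,s) \leftrightarrow d-2s$.

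For the source, the chain map $F$ from Subsection 5.1 realizes this reduction at the chain level, giving $cCFL^-/(U+1)cCFL^- \cong \widehat{CF}(D, \mathfrak t_\xi)$. From the short exact sequence
\[
0 \longrightarrow cCFL^- \xrightarrow{U+1} cCFL^- \xrightarrow{F} \widehat{CF} \longrightarrow 0
\]
and the induced long exact sequence in homology, the connecting morphism vanishes because $U+1$ acts injectively on $cHFL^-$: it is injective on the free summand since $\F[U]$ is a domain, and it is a unit on each torsion summand $\F[U]/U^k$ with inverse $\sum_{j=0}^{k-1} U^j$ (in characteristic two, $(U+1)(1 + U + \cdots + U^{k-1}) = 1 + U^k \equiv 1 \pmod{U^k}$). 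The same calculation shows that $T$ is annihilated by the $(U+1)$-quotient, so $(cHFL^-/T)/(U+1) \cong cHFL^-/(U+1) \cong \widehat{HF}(D, \mathfrak t_\xi)$, which by Equation \eqref{multi} equals $\widehat{HF}(\overline M, \mathfrak t_\xi) \otimes (\F_{(-1)} \oplus \F_{(0)})^{\otimes(n-1)}$ as a graded $\F$-vector space; Lemma \ref{lemma:1} identifies the bigrading on the source with the Maslov grading on this quotient under $(d,s) \mapsto d-2s$, and Lemma \ref{lemma:map} ensures that nothing in the torsion-free quotient gets collapsed in this process.

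For the target $V$, commuting the functor $\cdot \otimes_{\F[U]} \F[U]/(U+1)$ through the two tensor products and using $\F[U]_{(k)}/(U+1) \cong \F_{(k)}$ yields exactly $\widehat{HF}(\overline M, \mathfrak t_\xi) \otimes (\F_{(-1)} \oplus \F_{(0)})^{\otimes(n-1)}$. Matching the two $(U+1)$-reductions and lifting a homogeneous basis of this common quotient back to $cHFL^-/T$ and to $V$ yields the desired $\F[U]$-linear isomorphism, the rigidity of finitely generated free $\F[U]$-modules guaranteeing that the lift is a basis on both sides. The main obstacle is the grading bookkeeping: one must carefully collapse the bigrading $(d,s)$ on the source into the single grading $d-2s$ under which $U$ is invisible, and verify that the torsion summands disappear in the $(U+1)$-reduction, after which the matching of graded free $\F[U]$-modules is automatic.
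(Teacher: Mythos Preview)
Your argument is correct and reaches the same conclusion as the paper, but by a somewhat different route. The paper works explicitly with a homogeneous $\F[U]$-basis $\{[z_1],\dots,[z_r]\}$ of the torsion-free quotient and shows, using the two parts of Lemma~\ref{lemma:map}, that $\{F_*[z_1],\dots,F_*[z_r]\}$ is an $\F$-basis of $\widehat{HF}(D,\mathfrak t_\xi)$: Statement~1 gives surjectivity (every cycle in $\widehat{CF}$ lifts to a cycle in $cCFL^-$), and Statement~2 gives linear independence. You instead run the long exact sequence of the short exact sequence $0\to cCFL^-\xrightarrow{U+1}cCFL^-\xrightarrow{F}\widehat{CF}\to 0$, observe that injectivity of $U+1$ on $cHFL^-$ (via the structure theorem) kills the connecting map, and conclude $cHFL^-/(U+1)\cong\widehat{HF}(D,\mathfrak t_\xi)$ directly; then you match the $(U+1)$-reductions of two graded free $\F[U]$-modules. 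Your version is more module-theoretic and bypasses the element-level homogeneity arguments; the paper's version is more hands-on and actually names the isomorphism on a basis. Both rest on the same algebraic input (that $U+1$ is injective on $\F[U]$ and a unit on each $\F[U]/U^k$), so the difference is one of packaging rather than substance.

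One small remark: once you have the long exact sequence argument, your appeal to Lemma~\ref{lemma:map} in the second paragraph is redundant---the exactness already gives $\ker F_*=(U+1)\,cHFL^-$, which is everything you need. Also, to make the long exact sequence honestly graded you are implicitly using the collapsed grading $\delta=d-2s$ (in which $U+1$ is homogeneous of degree zero and $\partial^-$ has degree $-1$); it would be worth saying this explicitly, since in the native bigrading $U+1$ is not homogeneous.
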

\begin{proof}
 We just have to show that $F_*$ sends $\{[z_1],...,[z_r]\}$, a homogeneous $\F[U]$-basis of the 
 torsion-free quotient of
 $cHFL^-\left(\overline M,L,\mathfrak t_{\xi}\right)$, into an $\F$-basis of 
 $\mathcal X=\widehat{HF}\left(\overline M,\mathfrak t_{\xi}\right)
 \otimes\left(\F_{(-1)}\oplus\F_{(0)}\right)^{\bigotimes(n-1)}$.
 Statement 1 in Lemma \ref{lemma:map} tells us that $F_*$ is surjective. In fact, if $[y]\in\mathcal X$ then one has 
 $0=\widehat\partial_zy=F(\partial^-x)$; where $F(x)=y$.
 We apply Lemma \ref{lemma:map} to $\partial^-(x)$, since we can suppose that both $x$ and $y$ are homogeneous, and we obtain 
 that $\partial^-(x)=0$; then $[x]$ is indeed a homology class. At this point, it is easy to see that the set $\left\{F_*[z_1],...,F_*[z_r]\right\}$ is a system of 
 generators of $\mathcal X$.
 
 In order to prove that $F_*[z_1],...,F_*[z_r]$ are also linearly independent in $\mathcal X$
 we suppose that there is a 
 subset $\left\{F_*[z_{i_1}],...,F_*[z_{i_k}]\right\}$ such that 
 $$F_*[z_{i_1}]+...+F_*[z_{i_k}]=F_*[z_{i_1}+...+z_{i_k}]=[0]\:.$$ Then we apply Statement 2 of lemma \ref{lemma:map} to 
 $[z_{i_1}+...+z_{i_k}]$ and we obtain that it is a torsion class. This is a contradiction, because $[z_{i_1}],...,[z_{i_k}]$ 
 are part of an $\F[U]$-basis of a torsion-free $\F[U]$-module.
\end{proof}
Lemmas \ref{lemma:1} and \ref{lemma:2} and Theorem \ref{teo:iso} immediately give the following corollary.
\begin{cor}
 \label{cor:vanish}
 The Legendrian invariant $\mathfrak L(L,M,\xi)$ is non-torsion if and only if the contact invariant $\widehat c(M,\xi)$ is 
 non zero.
 Furthermore, if $D$ is a Legendrian Heegaard diagram compatible with $(L,M,\xi)$ then the Maslov and Alexander grading of the 
 element $\mathfrak L(D)$ are related
 by the following equality:
 $$M(\mathfrak L(D))=-d_3(M,\xi)+2A(\mathfrak L(D))+1-n\:,$$ where $n$ is the number of component of $L$.
\end{cor}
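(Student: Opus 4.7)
The plan is to read off both claims directly from Lemmas \ref{lemma:1} and \ref{lemma:2} and from Theorem \ref{teo:iso}. The whole statement should follow by essentially formal manipulations; nothing new needs to be computed.

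For the non-torsion statement I would proceed as follows. First observe that the map $F_*$ annihilates every $U$-torsion class, because after setting $U=1$ the variable $U$ acts as the identity on the target, so $U^k[x]=0$ forces $F_*[x]=F_*(U^k[x])=0$. Theorem \ref{teo:iso} then tells us that $F_*$ restricts to an isomorphism between the torsion-free quotient of $cHFL^-(\overline M,L,\mathfrak t_{\xi})$ and $\widehat{HF}(\overline M,\mathfrak t_{\xi})\otimes(\F_{(-1)}\oplus\F_{(0)})^{\bigotimes(n-1)}$, so that $[\mathfrak L(D)]$ is non-torsion if and only if $F_*[\mathfrak L(D)]\neq 0$. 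By Lemma \ref{lemma:2} the latter equals $\widehat c(M,\xi)\otimes(\textbf e_{-1})^{\bigotimes(n-1)}$, which is non-zero in the target if and only if $\widehat c(M,\xi)\neq[0]$, since $(\textbf e_{-1})^{\bigotimes(n-1)}$ is a generator of a free rank-one $\F$-summand. This establishes the equivalence.

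For the grading identity I would apply Lemma \ref{lemma:1} to $x=\mathfrak L(D)$, whose bigrading is $\bigl(M(\mathfrak L(D)),A(\mathfrak L(D))\bigr)$: it gives that $F(\mathfrak L(D))$ has Maslov grading equal to $M(\mathfrak L(D))-2A(\mathfrak L(D))$. On the other hand, Lemma \ref{lemma:2} identifies the class $F_*[\mathfrak L(D)]$ with $\widehat c(M,\xi)\otimes(\textbf e_{-1})^{\bigotimes(n-1)}$, and by Theorem \ref{teo:contact} the contact class $\widehat c(M,\xi)$ sits in Maslov grading $-d_3(M,\xi)$, while each factor $\textbf e_{-1}$ contributes a $-1$ in the Maslov grading of the tensor product. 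Hence $F(\mathfrak L(D))$ has Maslov grading
\[
-d_3(M,\xi)-(n-1)=-d_3(M,\xi)+1-n.
\]
Equating the two expressions yields exactly $M(\mathfrak L(D))=-d_3(M,\xi)+2A(\mathfrak L(D))+1-n$.

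The only thing to watch is the grading bookkeeping on the tensor factors $(\F_{(-1)}\oplus\F_{(0)})^{\bigotimes(n-1)}$; one must use that it is the $\textbf e_{-1}$ component (not $\textbf e_0$) that appears in Lemma \ref{lemma:2} so that the contribution to the grading is $1-n$ rather than $0$. Once that is correctly invoked, the statement is immediate and no further work beyond citing the three preceding results is required.
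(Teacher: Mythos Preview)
Your proposal is correct and follows essentially the same approach as the paper, which simply states that the corollary follows immediately from Lemmas \ref{lemma:1}, \ref{lemma:2} and Theorem \ref{teo:iso}; you have just spelled out the details of that deduction. One minor point worth tightening: for the grading identity you invoke Lemma \ref{lemma:2} at the level of homology classes, but when $\widehat c(M,\xi)=[0]$ that class vanishes and carries no grading information, so the argument should really be phrased at the chain level using $F(\mathfrak L(D))=c(D)$ and the grading-preserving maps from the proof of Lemma \ref{lemma:2}; this is a cosmetic fix and does not affect the substance of your argument.
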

In particular, this corollary says that the Legendrian 
link invariant $\mathfrak L$ is always
a torsion class if $\xi$ is overtwisted and always non-torsion if $(M,\xi)$ is strongly
symplectically fillable. In fact, from 
\cite{Ozsvath3} we know that $\widehat c(M,\xi)$ is zero in the first case and non-zero in the second one.

\subsection{Relation with classical Legendrian invariants}
We proved that the isomorphism class $\mathfrak L(L,M,\xi)$ is a Legendrian 
invariant. This is true also for 
the Alexander (and Maslov) grading of the element $\mathfrak L(D)\in cCFL^-(D,\mathfrak t_{\xi})$, that we 
denote with $A(\mathfrak L(D))$. 
From Corollary \ref{cor:vanish} we know that $\widehat c(M,\xi)\neq[0]$  
implies that $\mathfrak L(L,M,\xi)$ is non-torsion, hence it determines $A(\mathfrak L(D))$. On the other hand, if 
$\widehat c(M,\xi)$ is zero then a priori
the gradings of the element $\mathfrak L(D)$ could give more information.
Starting from these observations, we want to express the value of $A(\mathfrak L(D))$ in terms of the other known invariants 
of the Legendrian link $L$. 
Note that Corollary \ref{cor:vanish} also tells us that the Maslov grading
of $\mathfrak L(D)$ is determined, once we know $A(\mathfrak L(D))$. 

First we recall that, from the definitions of Thurston-Bennequin and rotation number, we have 
\[\text{tb}(L)=\sum_{i=1}^n\text{tb}_i(L)\:,\:\:\:\:\:\text{ where }
\text{tb}_i(L)=\text{tb}(L_i)+\text{lk}(L_i,L\setminus L_i)\]
and \[\text{rot}(L)=\sum_{i=1}^n\text{rot}_i(L)\:,\:\:\:\:\:\text{ where }\text{rot}_i(L)=\text{rot}(L_i)\:.\]
\begin{teo}
 \label{teo:ball}
 Consider $L\hookrightarrow(M,\xi)$ a null-homologous Legendrian 
 $n$-component link in a rational homology contact $3$-sphere and $D$ a 
 Legendrian Heegaard diagram, that comes from 
 an open book compatible with $(L,M,\xi)$. 
 Then we have that 
 \[(\mathfrak L(D))=\dfrac{\emph{tb}(L)-\emph{rot}(L)+n}{2}\:\:\:\:\:\text{and}\:\:\:\:\:M(\mathfrak L(D))=-d_3(M,\xi)+
 \emph{tb}(L)-\emph{rot}(L)+1\:.\]
\end{teo}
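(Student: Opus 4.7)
The Maslov grading formula is an immediate consequence of the Alexander grading formula combined with Corollary \ref{cor:vanish}: substituting $A(\mathfrak L(D)) = (\text{tb}(L) - \text{rot}(L) + n)/2$ into the relation $M(\mathfrak L(D)) = -d_3(M,\xi) + 2A(\mathfrak L(D)) + 1 - n$ yields exactly the claimed value of $M(\mathfrak L(D))$. So the plan is to reduce everything to proving the Alexander grading identity.

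My approach is to compute $A(\mathfrak L(D))$ directly from its definition. By construction $A(\mathfrak L(D)) = \tfrac{1}{2}\mathfrak s_{w,z}(\mathfrak L(D))[F]$ for a Seifert surface $F$ of $L$ in $\overline M$, so the task splits into two: first, identifying the relative Spin$^c$ structure $\mathfrak s_{w,z}(\mathfrak L(D)) \in \text{Spin}^c(\overline M, L)$, and second, evaluating it on $[F]$. From Theorem \ref{teo:main} we already know $\mathfrak s_w(\mathfrak L(D)) = \mathfrak t_\xi$, so the underlying 2-plane field $\pi_w(\mathfrak L(D))$ is homotopic (on all of $\overline M$) to the field induced by $\xi$. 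As in the knot case of Lisca--Ozsv\'ath--Stipsicz--Szab\'o, the refinement to a relative structure on $\overline M_L$ is produced by modifying this 2-plane field in a neighborhood of $L$ so that along $\partial \nu(L)$ it agrees with Turaev's canonical vector field; the modification is dictated by the positions of the basepoints $z$ relative to $L$ and to the contact framing coming from the page $S_1$ (Theorem \ref{teo:reduced}).

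The evaluation on $[F]$ is then a classical obstruction-theoretic computation. On each component $L_i$ the contact framing $\xi_{L_i}$ coincides with the page framing, so the Poincar\'e-dual contribution coming from the push-off of $L_i$ into $F$ produces $\text{tb}_i(L) = \text{tb}(L_i) + \text{lk}(L_i, L \setminus L_i)$. Extending a non-vanishing section of $\xi|_{L_i}$, tangent to $L_i$, across $F$ produces an obstruction equal to $-\text{rot}_i(L)$. Each component also contributes a normalization term of $+1$ arising in the passage from $\mathfrak s_w$ to $\mathfrak s_{w,z}$, coming from the fact that the Turaev boundary condition differs from the naive contact extension by a full twist along the meridian. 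Summing over $i = 1, \ldots, n$ and dividing by $2$ produces $A(\mathfrak L(D)) = (\text{tb}(L) - \text{rot}(L) + n)/2$.

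The main obstacle is the careful bookkeeping needed in the multi-component setting: the quantity $\text{tb}(L) = \sum_i \text{tb}(L_i) + \sum_{i \neq j} \text{lk}(L_i, L_j)$ intertwines self-linkings of the components with their pairwise linking numbers, and matching this against $\mathfrak s_{w,z}(\mathfrak L(D))[F]$ requires choosing push-offs of the $L_i$ inside $F$ consistently for all components simultaneously. An alternative, cleaner route is to reduce to the knot case via an inductive connected-sum argument using the connected sum formula proved later in Section \ref{section:five}: taking the Legendrian connected sum of two components merges them into one, decreases $n$ by $1$, and changes $\text{tb}$ and $\text{rot}$ in a controlled way that is exactly compensated by the behavior of the Alexander grading under the Heegaard Floer K\"unneth-type formula. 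Either route terminates in the original LOSS computation, which handles the case $n = 1$.
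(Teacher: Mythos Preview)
Your reduction of the Maslov formula to the Alexander formula via Corollary \ref{cor:vanish} matches the paper. The paper's route to the Alexander formula, however, is shorter than either of yours and avoids any new obstruction-theoretic computation. From the Legendrian Heegaard diagram $D$ one obtains a Legendrian Heegaard diagram $D_i$ for each component $L_i$ simply by deleting the curves and basepoints associated to the other components; the intersection point $\mathfrak L(D)$ restricts to $\mathfrak L(D_i)$, and the Alexander gradings satisfy
\[
A(\mathfrak L(D)) = \sum_{i=1}^n \Bigl( A(\mathfrak L(D_i)) + \tfrac{1}{2}\,\text{lk}(L_i, L \setminus L_i) \Bigr),
\]
the linking term coming from the change in Seifert surface when passing from $L$ to $L_i$. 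One then invokes the knot case from Ozsv\'ath--Stipsicz \cite{O-S} for each $A(\mathfrak L(D_i))$ and sums. Your direct obstruction-theoretic route is plausible in outline and would ultimately reproduce these same contributions, but it amounts to reproving the result of \cite{O-S} in the multi-component setting rather than citing it; the paper's ``forget components and correct by linking numbers'' argument is the cleaner reduction you were looking for.

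Your alternative connected-sum route has a genuine gap. The K\"unneth-type formula of Theorem \ref{teo:connected_sum} applies to $L_1 \# L_2$ where $L_1 \subset (M_1,\xi_1)$ and $L_2 \subset (M_2,\xi_2)$ live in \emph{separate} contact manifolds, and the ambient manifold splits as $M_1 \# M_2$. Banding together two components of a single link $L \subset (M,\xi)$ is not a connected sum in this sense: the ambient manifold does not decompose, and there is no formula in the paper relating $\mathfrak L$ of $L$ to $\mathfrak L$ of the banded link. So the inductive step you propose cannot be executed with the tools available; a generic $n$-component link is simply not a connected sum of links with fewer components.
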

\begin{proof}
 If $L$ is a knot then the claim has been proved by Ozsv\'ath and Stipsicz in \cite{O-S} (Theorem 1.6). 
 At this point, in order to obtain the claim for links, we need to relate $A(\mathfrak L(D)$ with the Alexander grading
 of the Legendrian invariants of the components $L_i$ of $L$. 
 
 A Legendrian Heegaard diagram $D_i$ for the knot $L_i$ is easily gotten from $D$ by removing some curves and basepoints.
 We denote the intersection point representing the Legendrian invariant of $L_i$ with $\mathfrak L(D_i)$.
 Then we have \newpage
 \[\begin{aligned}
   A(\mathfrak L(D))&=\sum_{i=1}^n\left(A\left(\mathfrak L(D_i)\right)+
   \dfrac{1}{2}\text{lk}(L_i,L\setminus L_i)\right)=\\
   &=\sum_{i=1}^n\left(\frac{\text{tb}(L_i)-\text{rot}(L_i)+1}{2}+\dfrac{\text{lk}(L_i,L\setminus L_i)}{2}\right)=
   \sum_{i=1}^n\dfrac{\text{tb}_i(L)-\text{rot}_i(L)+1}{2}=\\
   &=\dfrac{\text{tb}(L)-\text{rot}(L)+n}{2}\:.
  \end{aligned}\]
\end{proof}

\subsection{The link Floer homology group of a Legendrian connected sum}  
Take two null-homologous Legendrian oriented links 
$L_1,L_2$, respectively in the connected contact 
manifolds $(M_1,\xi_1)$ and $(M_2,\xi_2)$. We can define a Legendrian connected sum of the two links \cite{Etnyre3}
and we denote it with 
$L_1\#L_2\hookrightarrow(M_1\#M_2,\xi_1\#\xi_2)$. 

While the contact 3-manifold $(M_1\#M_2,\xi_1\#\xi_2)$ is uniquely defined, the Legendrian link $L_1\#L_2$ depends on the
choice of the components used to perform the connected sum. Moreover, we have the following properties \cite{Etnyre,Etnyre3}:
\begin{itemize}
 \item $d_3(M_1\#M_2,\xi_1\#\xi_2)=d_3(M_1,\xi_1)+d_3(M_2,\xi_2)$;
 \item $\text{tb}(L_1\#L_2)=\text{tb}(L_1)+\text{tb}(L_2)+1$;
 \item $\text{rot}(L_1\#L_2)=\text{rot}(L_1)+\text{rot}(L_2)$. 
\end{itemize}
Let us consider two adapted open book decompositions $(B_i,\pi_i,A_i)$, compatible with the triples $(L_i,M_i,\xi_i)$. We can
define a third open book $(B,\pi,A)$, for the manifold $M_1\#M_2$, with
the property that $\pi^{-1}(1)$ is a Murasugi sum of the pages $\pi_1^{-1}(1)$ and $\pi_2^{-1}(1)$; see \cite{Etnyre} for the 
definition. The resulting open book
is compatible with the triple $(L_1\#L_2,M_1\#M_2,\xi_1\#\xi_2)$; where the Murasugi sum is done along the components involved 
in the connected sum.

Denote with $D_1$,$D_2$ and $D$ the Legendrian Heegaard diagrams obtained from the open book decompositions that we introduced
before. Then, we have the following theorem.
\begin{teo}
 \label{teo:connected_sum}
 For every $\text{Spin}^c$ structure on $M_1$ and $M_2$ there is a chain map 
 $$cCFL^-(D,\mathfrak t_1\#\mathfrak t_2)\longrightarrow
 cCFL^-(D_1,\mathfrak t_{1})\otimes_{\F[U]} cCFL^-(D_2,\mathfrak t_{2})$$ that preserves the bigrading and the element 
 $\mathfrak L(D)$ is sent into $\mathfrak L(D_1)\otimes\mathfrak L(D_2)$.
 
 Furthermore, this map induces an isomorphism in homology; which means that
 $$\begin{aligned}
 cHFL^-\left(\overline M_1\#\overline M_2,L_1\#L_2,\mathfrak t_{\xi_1\#\xi_2}\right)&\cong\\
 cHFL^-\left(\overline M_1,L_1,\mathfrak t_{\xi_1}\right)&\otimes_{\F[U]} 
 cHFL^-\left(\overline M_2,L_2,\mathfrak t_{\xi_2}\right)\end{aligned}$$ as 
 bigraded $\F[U]$-modules and 
 $$\mathfrak L(L_1\#L_2,M_1\#M_2,\xi_1\#\xi_2)=\mathfrak L(L_1,M_1,\xi_1)\otimes\mathfrak L(L_2,M_2,\xi_2)\:.$$
\end{teo}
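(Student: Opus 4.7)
The plan is to exhibit the chain map essentially as an identification of chain complexes. The Murasugi sum of the pages $\pi_1^{-1}(1)$ and $\pi_2^{-1}(1)$ along the components used in the connected sum produces a page $S$ whose double $\Sigma = S\cup\overline{S}$ is a connected sum $\Sigma_1\#\Sigma_2$ performed at a point in the interior of the gluing region. Because the system of generators $A$ of $D$ is the disjoint union of (isotopic copies of) $A_1$ and $A_2$, and the $\alpha$-, $\beta$-curves of $D$ are obtained by doubling these arcs, the $\alpha$- and $\beta$-curves of $D$ split as the disjoint union of those of $D_1$ and $D_2$. Consequently, $\T_\alpha\cap\T_\beta$ is in natural bijection with $(\T_{\alpha_1}\cap\T_{\beta_1})\times(\T_{\alpha_2}\cap\T_{\beta_2})$ via $x\longleftrightarrow(x_1,x_2)$.

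First I would define $\Phi\colon cCFL^-(D,\mathfrak t_1\#\mathfrak t_2)\to cCFL^-(D_1,\mathfrak t_1)\otimes_{\F[U]}cCFL^-(D_2,\mathfrak t_2)$ on generators by $\Phi(x)=x_1\otimes x_2$, extended $U$-linearly. The $\Spin^c$-splitting is preserved because both $\mathfrak s_w$ and $\mathfrak s_z$ are additive under the connected sum decomposition of $\overline{M_1\#M_2}$, so $\mathfrak s_w(x)=\mathfrak t_1\#\mathfrak t_2$ corresponds precisely to $\mathfrak s_w(x_i)=\mathfrak t_i$ for $i=1,2$. A parallel check shows that $\Phi$ preserves the bigrading: $d_3$ is additive under contact connected sum, and a Seifert surface for $L_1\#L_2$ is a boundary connected sum of Seifert surfaces for $L_1$ and $L_2$, so both $M$ and $A$ add across the tensor product.

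Next I would prove that $\Phi$ is a chain map by a neck-stretching argument along the connecting sphere. Any $J$-holomorphic disk $\phi\in\pi_2(x,y)$ in $\mathrm{Sym}^{2g+l+n-2}(\Sigma)$ with $\mu(\phi)=1$ and $n_z(\phi)=0$ degenerates, after stretching the neck, into a pair $(\phi_1,\phi_2)$ of holomorphic disks in $\mathrm{Sym}(\Sigma_1)$ and $\mathrm{Sym}(\Sigma_2)$ connecting the corresponding factored endpoints; the gluing of the indices forces $\mu(\phi_1)+\mu(\phi_2)=1$, so exactly one side contributes a constant disk and the other an index-$1$ differential term. Because the basepoints in $w$ and $z$ stay entirely inside one summand, the weight $U^{n_w(\phi)}=U^{n_{w_1}(\phi_1)}\cdot U^{n_{w_2}(\phi_2)}$ factors correctly, yielding $\Phi\circ\partial^-=\bigl(\partial^-\otimes\mathrm{id}+\mathrm{id}\otimes\partial^-\bigr)\circ\Phi$. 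This argument is a direct transcription of the connected-sum analysis of \cite{Ozsvath1,Ozsvath2} and the open-book version used by Honda--Kazez--Mati\'c for $\widehat c$; admissibility of $D$ is inherited from admissibility of $D_1$ and $D_2$ by Proposition \ref{prop:admissibility} applied separately on each side.

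Finally I would observe that $\Phi$ is already a bijection on generators, hence an isomorphism at the chain level, which gives the homological isomorphism statement for free. The identity $\Phi(\mathfrak L(D))=\mathfrak L(D_1)\otimes\mathfrak L(D_2)$ is then immediate: the unique intersection point on the Murasugi-summed page $S$ lying above $1$ is the union of the unique intersection points on the pages of $(B_1,\pi_1,A_1)$ and $(B_2,\pi_2,A_2)$, because these are characterized locally on each factor. Passing to homology gives the isomorphism of bigraded $\F[U]$-modules and the connected-sum formula for $\mathfrak L$. The hard part will be making the neck-stretching rigorous for the Murasugi-summed open book, since the sum is performed along a polygon rather than a single point on the page; the standard trick is to arrange the almost-complex structure so that the connecting sphere in $\Sigma$ lies off all $\alpha,\beta$ curves and basepoints, reducing the analysis to the classical connected-sum formula. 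Once that technical point is handled, the rest of the proof is essentially bookkeeping.
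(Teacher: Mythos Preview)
Your proposal is correct and matches the paper's approach: the paper's own proof is a one-line appeal to \cite{LOSS} Section 7, \cite{Ozsvath2} Section 7 and \cite{Ozsvath} Section 11, and what you have sketched is precisely the content of those references specialized to the Murasugi-summed open book. The only remark is that the cited sources typically phrase the quasi-isomorphism via holomorphic triangle maps rather than a literal chain-level identification after neck-stretching; your version is slightly stronger than what the theorem states but reduces to the same analysis once the connecting region is arranged to miss all curves and basepoints, exactly as you note at the end.
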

\begin{proof}
 It follows from \cite{LOSS} Section 7, \cite{Ozsvath2} Section 7 and \cite{Ozsvath} Section 11.
\end{proof}
We note that the link Floer homology group of the connected sum does not depend on the choice of the components.  
In particular, this means that we can compute
the link Floer homology group and the Legendrian invariant of a disjoint union $L_1\sqcup L_2$. See \cite{C-M}
for the definition.
\begin{prop}
 If we denote with $\bigcirc_2$ a smooth $2$-component unlink and with $\mathcal O_2$ the Legendrian $2$-component unlink
 in $(S^3,\xi_{\text{st}})$ such that $\emph{tb}(\mathcal O_2)=-2$ then we have
 $$\begin{aligned}
    cHFL^-\left(\overline M,L_1\sqcup L_2,\mathfrak t_{\xi}\right)&\cong\\ 
    cHFL^-\left(\overline M_1,L_1,\mathfrak t_{\xi_1}\right)
  \otimes_{\F[U]}cHFL^-&\left(\overline M_2,L_2,\mathfrak t_{\xi_2}\right)
  \otimes_{\F[U]}cHFL^-(\bigcirc_2)\end{aligned}$$
 and 
 $\mathfrak L(L_1\sqcup L_2,M,\xi)=
 \mathfrak L(L_1,M_1,\xi_1)\otimes\mathfrak L(L_2,M_2,\xi_2)\otimes\mathfrak L(\mathcal O_2)$. 
\end{prop}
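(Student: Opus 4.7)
The plan is to reduce the disjoint-union case to two applications of Theorem~\ref{teo:connected_sum}. The key observation is that $L_1 \sqcup L_2$, viewed as a Legendrian link in $(M_1 \# M_2,\xi_1\#\xi_2)=(M,\xi)$, is Legendrian isotopic to the iterated Legendrian connected sum $L_1 \# \mathcal O_2 \# L_2$, where the first connected sum joins one unknot component of $\mathcal O_2$ to a component of $L_1$ and the second joins the remaining unknot component of $\mathcal O_2$ to a component of $L_2$. This works because each component of $\mathcal O_2$ is a standard Legendrian unknot with $\text{tb}=-1$, and connected-summing a Legendrian link component with such an unknot does not change its Legendrian type; hence $L_1 \# \mathcal O_2$ is Legendrian isotopic to $L_1 \sqcup \mathcal O$, and a further connected sum with $L_2$ along the remaining $\mathcal O$ recovers $L_1 \sqcup L_2$. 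As a consistency check, both sides have the same Thurston--Bennequin and rotation numbers, and the $d_3$-invariants add correctly since $d_3(S^3,\xi_{\text{st}})=0$.

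With this isotopy in hand, I would apply Theorem~\ref{teo:connected_sum} first to $L_1$ and $\mathcal O_2$, giving
$$cHFL^-\left(\overline M_1,L_1\#\mathcal O_2,\mathfrak t_{\xi_1}\right) \cong cHFL^-\left(\overline M_1,L_1,\mathfrak t_{\xi_1}\right) \otimes_{\F[U]} cHFL^-(\bigcirc_2)$$
and sending $\mathfrak L(L_1\#\mathcal O_2)$ to $\mathfrak L(L_1,M_1,\xi_1)\otimes\mathfrak L(\mathcal O_2)$. A second application of the same theorem, to $L_1\#\mathcal O_2$ and $L_2$, tensors in the data of $L_2$ and produces the full triple tensor decomposition of the statement together with the product formula $\mathfrak L(L_1\#\mathcal O_2\#L_2)=\mathfrak L(L_1,M_1,\xi_1)\otimes\mathfrak L(\mathcal O_2)\otimes\mathfrak L(L_2,M_2,\xi_2)$. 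Invariance of $\mathfrak L$ under Legendrian isotopy (Theorem~\ref{teo:main}) then transports both the $\F[U]$-module isomorphism and the identification of $\mathfrak L$ back from $L_1\#\mathcal O_2\#L_2$ to $L_1\sqcup L_2$.

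The main obstacle is certifying the first step with care: one must choose the component pairings in the two Legendrian connected sums so that after the first sum it is the unused component of $\mathcal O_2$ that is involved in the second sum, and one must check that the resulting iterated Murasugi sum of open books really is the one produced by two successive applications of Theorem~\ref{teo:connected_sum}. A more direct route, which I would probably adopt to sidestep this bookkeeping, is to construct a single adapted open book decomposition for $(L_1 \sqcup L_2,M,\xi)$ as the Murasugi sum of open books adapted to $(L_1,M_1,\xi_1)$, $(L_2,M_2,\xi_2)$ and the standard open book for $(\mathcal O_2,S^3,\xi_{\text{st}})$, arranged so that the two plumbing bands sit on opposite sides of the $\mathcal O_2$ piece; the associated Legendrian Heegaard diagram then factors as a triple tensor product at the chain level, which immediately yields both the $\F[U]$-module isomorphism and the identity $\mathfrak L(L_1\sqcup L_2,M,\xi)=\mathfrak L(L_1,M_1,\xi_1)\otimes\mathfrak L(L_2,M_2,\xi_2)\otimes\mathfrak L(\mathcal O_2)$ at the level of cycles.
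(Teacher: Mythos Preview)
Your proposal is correct and follows essentially the same approach as the paper, which simply says to apply Theorem~\ref{teo:connected_sum} twice, once on each component of $\mathcal O_2$. You supply more justification than the paper for why $L_1\#\mathcal O_2\#L_2$ is Legendrian isotopic to $L_1\sqcup L_2$, and your alternative direct Murasugi-sum construction is not needed.
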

\begin{proof}
 We just apply Theorem \ref{teo:connected_sum} twice, each time on one of the two components of $\mathcal O_2$. 
\end{proof}
The homology group $cHFL^-(\bigcirc_2)$ is isomorphic, as bigraded 
$\F[U]$-module, to $\F[U]_{(-1,0)}\oplus\F[U]_{(0,0)}$ (a proof can be found in \cite{Book}).
Furthermore, Theorem \ref{teo:ball} tells us that $$\mathfrak L(\mathcal O_2)=\textbf e_{-1,0}\:,$$ that is the generator of 
$\F[U]$ with bigrading $(-1,0)$. 

This means that, if $\widehat c(M,\xi)$ is non-zero, we have that
$$M(\mathfrak L(L_1\sqcup L_2,M,\xi))=M(\mathfrak L(L_1,M_1,\xi_1))+M(\mathfrak L(L_2,M_2,\xi_2))-1$$ and
$$A(\mathfrak L(L_1\sqcup L_2,M,\xi))=A(\mathfrak L(L_1,M_1,\xi_1))+A(\mathfrak L(L_2,M_2,\xi_2))\:.$$

We also observe that:
\begin{itemize}
 \item $\text{tb}(L_1\sqcup L_2)=\text{tb}(L_1)+\text{tb}(L_2)$;
 \item $\text{rot}(L_1\sqcup L_2)=\text{rot}(L_1)+\text{rot}(L_2)$.
\end{itemize}

\subsection{Stabilizations of a Legendrian link}
We know that Legendrian links in the tight $S^3$ can be represented with front projections, see \cite{Etnyre4} for more 
details.
Then we define positive (negative) stabilization of a Legendrian link $L$ in $(S^3,\xi_{\text{st}})$, with front projection 
$P$, the Legendrian link $L^{\pm}$ represented
by the front projection $P^{\pm}$; which is obtained by adding two consecutive downward (upward) cusps to $P$. 
Stabilizations are well-defined up to Legendrian isotopy, in the sense that they
do not depend on the choice of the point of $P$ where we add the new cusps.

At this point it is easy for us the define stabilizations in every contact manifold. 
\begin{figure}[ht]
  \centering
  \def\svgwidth{10cm}
  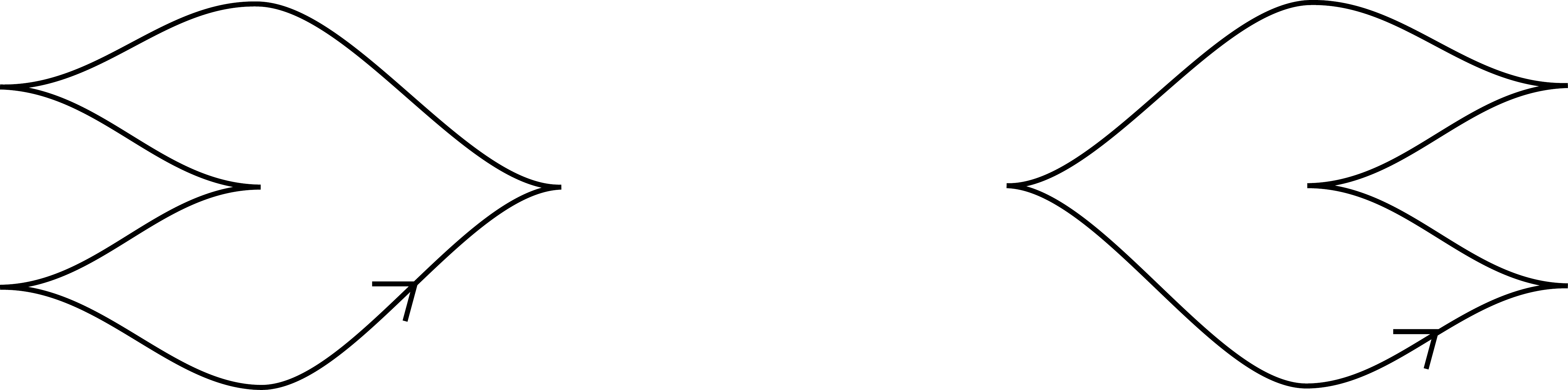   
  \caption{Front projection of $\mathcal O^+$ (left) and $\mathcal O^-$ (right).} 
  \label{Stabilizations}
\end{figure}
In fact we say that $L^\pm$ is the 
positive (negative) stabilization of $L$, 
a Legendrian link in $(M,\xi)$, if $L^\pm=L\#\mathcal O^\pm$; where $\mathcal O^\pm$ is the positive (negative) stabilization 
of the standard Legendrian unknot
$\mathcal O$ in $(S^3,\xi_{\text{st}})$. 
The Legendrian knots $\mathcal O^\pm$ are shown as front projections in Figure \ref{Stabilizations}.

The link type remains unchanged after stabilizations. The behaviour of the other classical invariants is given by
the following proposition.
\begin{prop}
 \label{prop:stabilizations}
 For every null-homologous Legendrian link $L$ in a contact $3$-manifold $(M,\xi)$ one has
 $$\emph{tb}\left(L^\pm\right)=\emph{tb}(L)-1\:\:\:\:\:\text{and}\:\:\:\:\:\emph{rot}\left(L^\pm\right)=\emph{rot}(L)\pm 1\:.$$
 Furthermore, we have that 
 $$\mathfrak L(L^+,M,\xi)=U\cdot\mathfrak L(L,M,\xi)\:\:\:\:\:\text{and}\:\:\:\:\:\mathfrak L(L^-,M,\xi)=\mathfrak L(L,M,\xi)$$
 in $cHFL^-\left(\overline M,L,\mathfrak t_{\xi}\right)$.
\end{prop}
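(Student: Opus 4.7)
The plan is to reduce everything to the case $L = \mathcal{O}^\pm$ in $(S^3, \xi_{\text{st}})$ via the connected sum formula, since by definition $L^\pm = L \# \mathcal{O}^\pm$. For the classical invariants this is immediate; for $\mathfrak{L}$ we apply Theorem \ref{teo:connected_sum}, which reduces the claim to computing $\mathfrak{L}(\mathcal{O}^\pm, S^3, \xi_{\text{st}})$ inside $cHFL^-(S^3,\mathcal{O}) \cong \F[U]$.

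First I would dispose of the classical invariants. From the front projection in Figure \ref{Stabilizations} one reads off $\text{tb}(\mathcal{O}^\pm) = -2$ and $\text{rot}(\mathcal{O}^\pm) = \pm 1$. Applying the additivity formulas recalled just before Theorem \ref{teo:connected_sum},
\begin{align*}
\text{tb}(L^\pm) &= \text{tb}(L) + \text{tb}(\mathcal{O}^\pm) + 1 = \text{tb}(L) - 1, \\
\text{rot}(L^\pm) &= \text{rot}(L) + \text{rot}(\mathcal{O}^\pm) = \text{rot}(L) \pm 1,
\end{align*}
which gives the first two statements.

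Next I would pin down $\mathfrak{L}(\mathcal{O}^\pm)$. Since $(S^3,\xi_{\text{st}})$ is strongly symplectically fillable, $\widehat{c}(S^3,\xi_{\text{st}}) \neq [0]$, so by Corollary \ref{cor:vanish} the invariants $\mathfrak{L}(\mathcal{O}^\pm)$ are non-torsion in $cHFL^-(S^3,\mathcal{O}) \cong \F[U]_{(0,0)}$, hence uniquely determined by their bigrading. Theorem \ref{teo:ball} combined with $d_3(S^3,\xi_{\text{st}}) = 0$ gives $(M,A)(\mathfrak{L}(\mathcal{O}^-)) = (0,0)$ and $(M,A)(\mathfrak{L}(\mathcal{O}^+)) = (-2,-1)$. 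Normalizing $\mathfrak{L}(\mathcal{O})$ to be the generator of $\F[U]$ in bigrading $(0,0)$ (which is forced, by the same argument applied to $\mathcal{O}$), we conclude
\[
\mathfrak{L}(\mathcal{O}^-, S^3, \xi_{\text{st}}) = \mathfrak{L}(\mathcal{O}), \qquad \mathfrak{L}(\mathcal{O}^+, S^3, \xi_{\text{st}}) = U \cdot \mathfrak{L}(\mathcal{O}),
\]
since $U$ lowers the bigrading by $(2,1)$.

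Finally I would feed this into the connected sum formula. Theorem \ref{teo:connected_sum} gives
\[
\mathfrak{L}(L^\pm, M, \xi) = \mathfrak{L}(L, M, \xi) \otimes \mathfrak{L}(\mathcal{O}^\pm, S^3, \xi_{\text{st}})
\]
inside $cHFL^-(\overline M, L, \mathfrak{t}_\xi) \otimes_{\F[U]} \F[U]$; identifying this tensor product with $cHFL^-(\overline M, L, \mathfrak{t}_\xi)$ in the standard way, $\mathfrak{L}(\mathcal{O})$ acts as the unit and $U \cdot \mathfrak{L}(\mathcal{O})$ acts as multiplication by $U$. This yields the two formulas for $\mathfrak{L}(L^\pm, M, \xi)$. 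The main point requiring care is the normalization step: one must justify that $\mathfrak{L}(\mathcal{O})$ really is the unit of $\F[U]$ under the identification used in Theorem \ref{teo:connected_sum}, which ultimately follows from the fact that the connected sum with $\mathcal{O}$ in $(S^3,\xi_{\text{st}})$ is the identity operation on Legendrian links together with the uniqueness (by bigrading) of the non-torsion generator.
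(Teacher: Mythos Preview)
Your proposal is correct and follows essentially the same route as the paper: reduce via the connected sum formula (Theorem \ref{teo:connected_sum}) to computing $\mathfrak L(\mathcal O^\pm)$ in $cHFK^-(\bigcirc)\cong\F[U]_{(0,0)}$, and then pin that down by its bigrading using the classical invariants of $\mathcal O^\pm$. The only difference is cosmetic: the paper dismisses the classical-invariant formulas as a standard fact rather than deriving them from additivity under connected sum, and it does not bother invoking Corollary \ref{cor:vanish} for non-torsion since $\F[U]$ has no torsion anyway.
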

\begin{proof}
 The first claim is a standard fact (see \cite{Etnyre4} for example). The second one follows from 
 Theorem \ref{teo:connected_sum}, which says that we just need
 to determine $\mathfrak L(\mathcal O^\pm)$, the fact that $cHFK^-(\bigcirc)\cong\F[U]_{(0,0)}$, 
 which says that $\mathfrak L(\mathcal O^\pm)$ 
 is fixed by the classical invariants
 of $\mathcal O^\pm$, and the first part of this proposition, which tells us that 
 $\text{tb}(\mathcal O^\pm)=-2$ and $\text{rot}(\mathcal O^\pm)=\pm 1$.
\end{proof}

\subsection{Loose Legendrian links}
Since $\widehat c(M,\xi)$ is always zero for overtwisted contact manifold, we have that the Legendrian link invariant 
$\mathfrak L$ is always torsion in this case. But Proposition \ref{prop:loose} says more in the case of loose Legendrian links.
\begin{proof}[Proof of Proposition \ref{prop:loose}]
 The complement of $L$ in $M$ contains an overtwisted disk $E$. Since $E$ is contractible, we can find a ball $U$ such that 
 $E\subset U\subset M\setminus L$. The restriction
 of $\xi$ to $U$ is obviously overtwisted; moreover, we can choose $E$ such that $\partial U$ has trivial dividing set. 
 Thus we have that $(M,\xi)=(M,\xi_1)\#(S^3,\xi')$; where $\xi_1$ concides with $\xi$
 near $L$ and $\xi'$ is an overtwisted structure on $S^3$.
 
 We now use the fact that the standard Legendrian unknot $\mathcal O$ is well-defined, up to Legendrian isotopy, and 
 Theorem \ref{teo:connected_sum} to say that
 $$\mathfrak L(L,M,\xi)=\mathfrak L(L,M,\xi_1)\otimes\mathfrak L(\mathcal O,S^3,\xi')\:.$$ But since
 $cHFK^-(\bigcirc)\cong\F[U]_{(0,0)}$, and then there is no torsion, we know that the Legendrian invariant
 of an unknot is zero in overtwisted 3-spheres. Then $\mathfrak L(L,M,\xi)$ is also zero.
\end{proof}
This proposition says something about stabilizations. In fact, in principle a stabilization of a non-loose 
Legendrian link $L\hookrightarrow(M,\xi)$
can be loose, but if $\mathfrak L(L,M,\xi)$ is non zero then all its negative stabilizations are also non-loose.

\section{The transverse case}
\label{section:six}
We recall that there is a way to associate a Legendrian oriented link to a transverse link and viceversa.
Given a Legendrian link $L$, we denote with $T_L$ the 
the transverse push-off of $L$, which is transverse. The construction is found in Section 2.9 in \cite{Etnyre4}; here we recall that transverse links have a canonical orientation induced by the contact form.
Any two transverse push-offs are transversely isotopic and then $T_L$ is uniquely defined, up to
transverse isotopy.

In the same way, if $T$ is a transverse link then we can define a Legendrian approximation $L_T$ of $T$. The procedure
is also described in Section 2.9 of \cite{Etnyre4}. The Legendrian link $L_T$ is not well-defined up to Legendrian
isotopy, but only up to negative stabilizations. Then from \cite{Epstein} we have the following theorem.
\begin{teo}[Epstein]
 \label{teo:epstein}
 Two transverse links in a contact manifold are transversely isotopic if and only if they admit Legendrian approximations
 which differ by negative stabilizations.
\end{teo}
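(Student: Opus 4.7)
The plan is to prove the two directions separately, treating the ``if'' implication as essentially a local computation and the ``only if'' direction as the substantive content. The key structural fact underlying everything is that for any Legendrian link $L$, the transverse push-off $T_L$ is transversely isotopic to $T_{L^-}$, the push-off of its negative stabilization, while $T_{L^+}$ is generally a different transverse type. I would verify this by working in a standard contact neighborhood $(\R^3,\ker(dz-y\,dx))$ of a Legendrian arc, writing down explicit coordinates for the negative stabilization (a downward zigzag in the front projection) and its push-off, and checking that the resulting transverse curve is isotopic through transverse arcs to the original push-off. Given this lemma, the ``if'' direction is immediate: if $L_1$ and $L_2$ are related by a sequence of negative stabilizations and destabilizations (and Legendrian isotopies), then iterated application gives $T_{L_1}$ transversely isotopic to $T_{L_2}$, and since any $T_i$ is transversely isotopic to $T_{L_i}$ whenever $L_i$ is a Legendrian approximation of $T_i$, we conclude $T_1\sim T_2$.

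For the ``only if'' direction, I would start with a transverse isotopy $T_t$, $t\in[0,1]$, from $T_1$ to $T_2$, and try to lift it to a one-parameter family $L_t$ of Legendrian approximations in such a way that $L_t$ is piecewise smooth in $t$, with Legendrian isotopies on the smooth pieces and negative stabilizations/destabilizations at the finitely many breakpoints. Concretely, the Legendrian approximation $L_{T_t}$ is constructed by $C^0$-perturbing $T_t$ along a vector field in the contact distribution transverse to $T_t$; for generic $t$ this perturbation varies smoothly, yielding a Legendrian isotopy between the approximations at nearby parameter values.

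The substance of the argument, and the main obstacle, is the analysis of the non-generic parameter values $t_0$ at which the perturbation degenerates. My plan would be to show, by an explicit normal-form argument in a Darboux chart near the relevant point of $T_{t_0}$, that the only codimension-one degenerations that occur correspond to inserting or deleting a cusp pair of the sort produced by a negative stabilization. Here the asymmetry between positive and negative stabilizations enters crucially: the Legendrian approximation is obtained by perturbing against a \emph{specific} direction dictated by the coorientation of $\xi$, so bifurcations that would correspond to positive stabilizations are not realized by this construction. This step is what I expect to be genuinely hard, requiring a careful transversality argument in the space of transverse-to-Legendrian projections, together with a check that every such bifurcation changes $L_t$ exactly by a negative (de)stabilization in a neighborhood of the degeneration, leaving it unchanged elsewhere.

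Finally, to finish, I would concatenate the Legendrian isotopies on the smooth subintervals with the negative (de)stabilizations at the breakpoints, and check that the endpoints $L_0$ and $L_1$ are Legendrian approximations of $T_1$ and $T_2$ respectively; since any two Legendrian approximations of a single transverse link differ by negative stabilizations (the local version of the same statement), we may adjust $L_0$ and $L_1$ to match any prescribed approximations of $T_1$ and $T_2$, completing the proof.
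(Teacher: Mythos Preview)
The paper does not supply a proof of this theorem at all: it is stated as a known result and attributed, via the citation \cite{Epstein}, to the literature (Etnyre--Honda). So there is no ``paper's own proof'' to compare against; the author treats the statement as a black box.

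Your outline is a reasonable reconstruction of the standard argument found in the literature. The ``if'' direction is correct as you describe it. For the ``only if'' direction, the usual published proofs do not in fact carry out the codimension-one bifurcation analysis you sketch; instead they argue more directly by extending the transverse isotopy to an ambient contact isotopy (Gray's theorem for the parametric family), pushing a single Legendrian approximation along with it, and then invoking the local fact that any two Legendrian approximations of the same transverse link become Legendrian isotopic after enough negative stabilizations. Your approach would also work in principle, but the transversality argument you flag as ``genuinely hard'' is avoidable, and in practice is avoided.
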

The only classical invariant of a null-homologous transverse link $T$,
other than the smooth link type is the \emph{self-linking number} $\text{sl}(T)$.
In every $(M,\xi)$ rational homology contact 3-sphere, we define the number $\text{sl}(T)$ as follows:
$$\text{sl}(T)=\text{tb}(L_T)-\text{rot}(L_T)\:.$$
Clearly, from Theorem \ref{teo:epstein} and since negative stabilizations drop both the Thurston-Bennequin and rotation
number by one, we have that the self-linking number is a well-defined transverse invariant. Moreover, 
from the definition of self-linking number, we have the following
properties:
\begin{itemize}
 \item $\text{sl}(T_1\#T_2)=\text{sl}(T_1)+\text{sl}(T_2)+1$;
 \item $\text{sl}(T_1\sqcup T_2)=\text{sl}(T_1)+\text{sl}(T_2)$.
\end{itemize}
We can now define a transverse
invariant from link Floer homology by taking
$$\mathfrak T(T,M,\xi)=\mathfrak L(L_T,M,\xi)\:,$$
where $L_T$ is a Legendrian approximation of $T$.
The invariant $\mathfrak T$ has the same basic properties of $\mathfrak L$ that we recall in the following 
theorem.
\begin{teo}
 \label{teo:transverse}
 The isomorphism class $\mathfrak T(T,M,\xi)$ in $cHFL^-\left(\overline M,T,\mathfrak t_{\xi}\right)$ is a transverse
 link invariant. 
 If $n$ is the number of components of $T$ we have that the bigrading of $\mathfrak T$ is 
 \[(\mathfrak T(T,M,\xi))=\dfrac{\emph{sl}(T)+n}{2}\:\:\:\:\:\text{and}\:\:\:\:\:M(\mathfrak T(T,M,\xi))=-d_3(M,\xi)+
 \emph{sl}(T)+1\:.\] 
 Furthermore, one has
 \[\mathfrak T(T_1\#T_2,M_1\# M_2,\xi_1\#\xi_2)=\mathfrak T(T_1,M_1,\xi_1)\otimes\mathfrak T(T_2,M_2,\xi_2)\:.\]
\end{teo}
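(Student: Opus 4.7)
The plan is to reduce everything to the Legendrian statements already established and use Epstein's theorem to handle the ambiguity in the Legendrian approximation. First I would prove well-definedness. By definition $\mathfrak T(T,M,\xi)=\mathfrak L(L_T,M,\xi)$, so I must show the right-hand side does not depend on the choice of Legendrian approximation. By Theorem \ref{teo:epstein}, any two Legendrian approximations $L_T$ and $L_T'$ become Legendrian isotopic after some finite number of negative stabilizations on each. Proposition \ref{prop:stabilizations} states that $\mathfrak L(L^-,M,\xi)=\mathfrak L(L,M,\xi)$, so negative stabilizations do not alter the class of $\mathfrak L$. Combined with the Legendrian invariance of $\mathfrak L$ proved in Theorem \ref{teo:main}, this shows $\mathfrak L(L_T,M,\xi)=\mathfrak L(L_T',M,\xi)$, so $\mathfrak T(T,M,\xi)$ is a genuine transverse invariant.

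Next I would compute the bigrading. Pick any Legendrian approximation $L_T$ of $T$ and any adapted Legendrian Heegaard diagram $D$ for $(L_T,M,\xi)$. Theorem \ref{teo:ball} gives
\[
A(\mathfrak L(D))=\dfrac{\text{tb}(L_T)-\text{rot}(L_T)+n}{2}\quad\text{and}\quad M(\mathfrak L(D))=-d_3(M,\xi)+\text{tb}(L_T)-\text{rot}(L_T)+1.
\]
By the very definition of the self-linking number, $\text{sl}(T)=\text{tb}(L_T)-\text{rot}(L_T)$. Substituting yields the desired formulas. Since the bigrading of $\mathfrak L(D)$ descends to the bigrading of the isomorphism class $\mathfrak L(L_T,M,\xi)$ (as all the chain maps $\Psi$ from Section \ref{section:four} preserve it), the formulas pass to $\mathfrak T(T,M,\xi)$.

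Finally, for the connected sum formula, I would argue that for any Legendrian approximations $L_{T_i}$ of $T_i$, the Legendrian connected sum $L_{T_1}\#L_{T_2}$ is a Legendrian approximation of $T_1\#T_2$. This is the essential geometric input: the transverse push-off/Legendrian approximation construction is local near a point of the link, so performing the connected sum on transverse representatives and then approximating yields (up to negative stabilization, which is absorbed by well-definedness) the Legendrian connected sum of the approximations. Granting this compatibility, Theorem \ref{teo:connected_sum} applied to $L_{T_1}\#L_{T_2}$ gives
\[
\mathfrak L(L_{T_1}\#L_{T_2},M_1\#M_2,\xi_1\#\xi_2)=\mathfrak L(L_{T_1},M_1,\xi_1)\otimes\mathfrak L(L_{T_2},M_2,\xi_2),
\]
which by the definition of $\mathfrak T$ is exactly the connected sum formula.

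The main obstacle is the last step: verifying that Legendrian approximation commutes with connected sum up to negative stabilization. This is a contact-topological claim rather than a Floer-theoretic one, and the cleanest route is to choose the approximations so that the Darboux balls used in both constructions are arranged compatibly near the connecting arc, so that the two procedures literally produce the same Legendrian link. Once this geometric compatibility is established, the entire theorem follows mechanically from Theorems \ref{teo:epstein}, \ref{teo:ball}, \ref{teo:connected_sum} and Proposition \ref{prop:stabilizations}.
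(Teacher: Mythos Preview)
Your proposal is correct and follows essentially the same approach as the paper: invariance from Theorem~\ref{teo:epstein} together with Proposition~\ref{prop:stabilizations}, the bigrading from Theorem~\ref{teo:ball} and the definition of $\self$, and the connected sum formula from Theorem~\ref{teo:connected_sum} plus the compatibility of Legendrian approximation with connected sum. The paper's proof is terser, simply asserting that ``the operations of Legendrian approximation and connected sum commute,'' whereas you spell out why this commutativity (up to negative stabilization) is the key geometric input.
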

\begin{proof}
 Proposition \ref{prop:stabilizations} and Theorem \ref{teo:epstein} tell us that $\mathfrak T(T,M,\xi)$ is an
 invariant. The other properties follow from Theorem \ref{teo:ball}, the definition of self-linking number and the fact
 that the operations of Legendrian approximation and connected sum commute. 
\end{proof}
In the case of knots the invariant $\mathfrak T$ has been introduced first in \cite{LOSS}.

\section{Applications}
\label{section:seven}
\subsection{A different version of the Legendrian invariant}
Let us consider a Legendrian Heegaard diagram $D$, given by an open book compatible with a triple $(L,M,\xi)$, where $M$ is a 
rational homology 3-sphere and $L$
is a null-homologous Legendrian $n$-component oriented link with link type $\mathcal L$. We recall that, when $M$ admits a diffeomorphism that reverses the orientation, we can identify
$(\overline M,\mathcal L)$ with $(M,\mathcal L^*)$. We denote by $\mathcal L^*$ the \emph{mirror image} of the oriented link type $\mathcal L$.

We can define another chain complex by taking the $\F$-vector space
\[\widehat{CFL}(D,\mathfrak t)=\dfrac{cCFL^-(D,\mathfrak t)}{U=0}\] for every $\text{Spin}^c$ structure $\mathfrak t$ on $M$.
The corresponding differential is $\widehat\partial=\partial^-\lvert_{U=0}$.
We obtain the hat link Floer homology group 
\[\widehat{HFL}(D,\mathfrak t)=\bigoplus_{d,s\in\Q}\widehat{HFL}_{d,s}(D,\mathfrak t)\:;\]
given by
\[\widehat{HFL}_{d,s}(D,\mathfrak t)=\dfrac{\text{Ker }\widehat\partial_{d,s}}{\text{Im }\widehat\partial_{d+1,s}}\:.\]
The group $\widehat{HFL}(D,\mathfrak t)$ is a finite dimensional, bigraded $\F$-vector space and its
isomorphism type is invariant under smooth isotopy of the link $L$ \cite{Ozsvath2}.
Hence, we can denote $\widehat{HFL}(D,\mathfrak t)$ with $\widehat{HFL}\left(\overline M,L,\mathfrak t\right)$.

The intersection point $\mathfrak L(D)$ is a cycle also in $\widehat{CFL}(D,\mathfrak t)$ and it determines the $\text{Spin}^c$
structure $\mathfrak t_{\xi}$. Then we have the following theorem.
\begin{teo}
 The equivalence class of $(\widehat{HFL}\left(\overline M,L,\mathfrak t_{\xi}\right),[\mathfrak L(D)])$
 is a Legendrian invariant of $(L,M,\xi)$ and we denote it 
 with $\widehat{\mathfrak L}(L,M,\xi)$.
 Furthermore, if $\widehat{\mathfrak L}(L,M,\xi)$ is non-zero then $\mathfrak L(L,M,\xi)$ is also non-zero.
 
 For a null-homologous transverse link $T\hookrightarrow(M,\xi)$ 
 we have that $\widehat{\mathfrak T}(T,M,\xi)=\widehat{\mathfrak L}(T_L,M,\xi)$, where
 $T_L$ is a Legendrian push-off of $T$, is a transverse invariant of $T$ and it has the same non-vanishing property of 
 $\widehat{\mathfrak L}$.
\end{teo}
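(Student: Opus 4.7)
The plan is to mirror the proofs of Theorems~\ref{teo:main} and~\ref{teo:transverse}, using in a crucial way that $\widehat{CFL}(D,\mathfrak t)$ is by definition the quotient of $cCFL^-(D,\mathfrak t)$ by $U=0$ and that $\mathfrak L(D)$ is represented by the same intersection point in both complexes.

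For the invariance of $\widehat{\mathfrak L}$, I would first observe that every chain map $\Psi_{(D_1,D_2)}$ built in Section~\ref{section:four} is $\F[U]$-equivariant: the triangle-counting maps associated to admissible arc slides and Hamiltonian isotopies of $\overline B$ carry the weight $U^{n_w(\psi)}$, while the positive-stabilization map $x\mapsto x\cup\{a\cap b\}$ is extended $U$-linearly. Consequently, each $\Psi_{(D_1,D_2)}$ descends to a chain map
$$\widehat\Psi_{(D_1,D_2)}:\widehat{CFL}(D_1,\mathfrak t)\longrightarrow\widehat{CFL}(D_2,\mathfrak t)$$
that still preserves the bigrading and still sends $\mathfrak L(D_1)$ to $\mathfrak L(D_2)$. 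To promote $\widehat\Psi_{(D_1,D_2)}$ to a quasi-isomorphism I would apply the five lemma to the natural short exact sequence
$$0\longrightarrow cCFL^-(D,\mathfrak t)\xlongrightarrow{U}cCFL^-(D,\mathfrak t)\longrightarrow\widehat{CFL}(D,\mathfrak t)\longrightarrow 0$$
and the induced long exact sequence in homology, using that $\Psi_{(D_1,D_2)}$ is already a quasi-isomorphism on $cCFL^-$ by Theorem~\ref{teo:main}. Combined with Theorem~\ref{teo:moves} this yields Legendrian invariance of the class $[\mathfrak L(D)]\in\widehat{HFL}(\overline M,L,\mathfrak t_{\xi})$.

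For the non-vanishing implication I would use the tautological projection $\pi:cCFL^-(D,\mathfrak t_{\xi})\to\widehat{CFL}(D,\mathfrak t_{\xi})$, which is a chain map and fixes $\mathfrak L(D)$ (since the latter has no $U$ in its expression). If $\mathfrak L(L,M,\xi)=[0]$, then $\mathfrak L(D)=\partial^-y$ for some $y$, and applying $\pi$ gives $\mathfrak L(D)=\widehat\partial\pi(y)$, so $\widehat{\mathfrak L}(L,M,\xi)=[0]$; the contrapositive is the desired implication.

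Finally, for the transverse statement I would set $\widehat{\mathfrak T}(T,M,\xi):=\widehat{\mathfrak L}(L_T,M,\xi)$ and invoke Epstein's Theorem~\ref{teo:epstein}, which reduces well-definedness to invariance under negative stabilization. Proposition~\ref{prop:stabilizations} establishes $\mathfrak L(L^-)=\mathfrak L(L)$ in $cHFL^-$ at the level of actual cycle representatives, and this equality descends modulo $U=0$, giving $\widehat{\mathfrak L}(L^-)=\widehat{\mathfrak L}(L)$. The non-vanishing property for $\widehat{\mathfrak T}$ then follows from that of $\widehat{\mathfrak L}$ by definition. The hard part, as I see it, is purely bookkeeping: a careful case-by-case verification that each map $\Psi$ constructed in Section~\ref{section:four} commutes with multiplication by $U$, so that it descends cleanly to the $U=0$ quotient. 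Once that equivariance is in place, the theorem is a formal consequence of results already proved.
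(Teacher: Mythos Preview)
Your proposal is correct and follows essentially the same approach as the paper: the paper's proof is simply the one-line remark that the argument is identical to that of Theorems~\ref{teo:main} and~\ref{teo:transverse}, with the non-vanishing property coming from the fact that $\widehat{CFL}(D,\mathfrak t)$ is a quotient of $cCFL^-(D,\mathfrak t)$. You have unpacked this into the explicit $\F[U]$-equivariance check, the five-lemma argument for quasi-isomorphism, and the contrapositive via the projection $\pi$, but these are exactly the details underlying the paper's terse reference.
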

The proof of this theorem is the same as the one of
Theorems \ref{teo:main} and \ref{teo:transverse}, except for the non-vanishing 
property, which follows from the fact that $\widehat{CFL}(D,\mathfrak t)$ is a quotient of $cCFL^-(D,\mathfrak t)$.

The invariant $\widehat{\mathfrak L}(L,M,\xi)$ can be refined using a naturality property of the link Floer homology
group of a connected sum. Suppose that $L$ is a 
Legendrian oriented link, with link type $\mathcal L$, in a contact 3-sphere $(S^3,\xi)$ such that $\widehat{\mathfrak L}(L,S^3,\xi)\neq[0]$. 
Let $S$ be a convex, splitting sphere with connected dividing set, which intersects $L$ transversely in exactly two points. Such a splitting sphere
expresses $L$ as a connected sum of two links $L_1$ and $L_2$.

Since $L=L_1\#_S\:L_2$ then its hat Heegaard Floer homoloy group admits the splitting
\[\widehat{HFL}(\mathcal L^*)\cong\widehat{HFL}\left(\mathcal L_1^*\right)\otimes_{\F}\widehat{HFL}
\left(\mathcal L_2^*\right)\:,\] where the mirror images appear because $S^3$ has a diffeomorphism that reverses the orientation.

The Alexander grading of $\widehat{\mathfrak L}(L,S^3,\xi)$ is well-defined, because we suppose that the invariant is non-zero.
Moreover, we have that
$$A\left(\widehat{\mathfrak L}(L,S^3,\xi)\right)=
A\left(\widehat{\mathfrak L}(L_1,S^3,\xi_1)\right)+A\left(\widehat{\mathfrak L}(L_2,S^3,\xi_2)\right)\:.$$
The pair $(s_1,s_2)$, where $$s_i=A\left(\widehat{\mathfrak L}(L_i,S^3,\xi_i)\right)$$ for $i=1,2$, is called \emph{Alexander pair} \index{Alexander pair} of
$\widehat{\mathfrak L}(L,S^3,\xi)$ respect to $S$ and we denote it with $A_S(L,S^3,\xi)$. We have that the Alexander pair is an invariant of
$L$ in the sense of the following theorem.
\begin{teo}
 \label{teo:refinement}
 Suppose that $L$ is a Legendrian link in $(S^3,\xi)$ such that $\widehat{\mathfrak L}(L,S^3,\xi)$ is non-zero. 
 We also assume that there are two convex, splitting spheres $S_1$ and $S_2$, which decompose $L$ as Legendrian connected sums, such that
 we can find a smooth isotopy of $M$ that fix $L$ and sends $S_1$ into $S_2$. 
 
 Then the two Alexander pairs of $\widehat{\mathfrak L}(L,S^3,\xi)$, respect to $S_1$ and $S_2$, coincide, which means that
 $A_{S_1}(L,S^3,\xi)=A_{S_2}(L,S^3,\xi)$.
\end{teo}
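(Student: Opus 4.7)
The plan is to reduce the identity $A_{S_1}(L,S^3,\xi)=A_{S_2}(L,S^3,\xi)$ to an equality of classical Legendrian invariants of the summands, and then to resolve that reduction via convex surface theory. Throughout, write $L_1^{(j)}, L_2^{(j)}$ for the Legendrian summands produced by the sphere $S_j$.

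First I would invoke Theorem \ref{teo:ball} to rewrite each coordinate of the Alexander pair as
\[
s_i^{(j)} \;=\; \frac{\text{tb}\bigl(L_i^{(j)}\bigr) - \text{rot}\bigl(L_i^{(j)}\bigr) + n_i}{2}, \qquad j = 1,2,
\]
where $n_i$ is the number of components of $L_i^{(j)}$. Since $n_i$ is manifestly preserved under any smooth ambient isotopy rel $L$, the claim reduces to showing
\[
\text{tb}\bigl(L_i^{(1)}\bigr) - \text{rot}\bigl(L_i^{(1)}\bigr) \;=\; \text{tb}\bigl(L_i^{(2)}\bigr) - \text{rot}\bigl(L_i^{(2)}\bigr)
\]
for $i=1,2$; by the connected-sum additivities $\text{tb}(L_1\#L_2) = \text{tb}(L_1)+\text{tb}(L_2)+1$ and $\text{rot}(L_1\#L_2) = \text{rot}(L_1)+\text{rot}(L_2)$ recalled just before Theorem \ref{teo:connected_sum}, it suffices to handle $i=1$.

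Next I would reinterpret $\text{tb}(L_i) - \text{rot}(L_i)$ as the self-linking number $\text{sl}(T_{L_i})$ of the transverse push-off (the defining formula from Section \ref{section:six}) and note that a convex splitting sphere $S_j$ with connected dividing set induces a canonical transverse connected-sum decomposition $T_L = T_{L_1^{(j)}} \# T_{L_2^{(j)}}$ compatible with the Legendrian decomposition via the push-off construction. If I can show that a smooth isotopy of convex splitting spheres rel $L$ transports these transverse summands through a transverse isotopy, then the transverse invariance of the self-linking number yields $\text{sl}(T_{L_i^{(1)}}) = \text{sl}(T_{L_i^{(2)}})$, and the theorem follows.

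The main obstacle is precisely this upgrade from smooth isotopy of the splitting spheres to a contact (equivalently, transverse) isotopy of the decomposition. The natural tool is convex surface theory: since each $S_j$ has connected dividing set, one can approximate the given smooth isotopy rel $L$ by an isotopy through convex spheres that preserves the connectivity of the dividing set, perturbing bifurcation moments away by a standard genericity argument; then Giroux's flexibility theorem combined with the Legendrian realization principle integrates the resulting characteristic-foliation data to a contact ambient isotopy of $S^3$ fixing a neighborhood of $L$. This contact isotopy drags $T_{L_i^{(1)}}$ to $T_{L_i^{(2)}}$ through a transverse isotopy, closing the argument.
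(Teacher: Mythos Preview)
Your reduction via Theorem~\ref{teo:ball} to the equality $\text{tb}(L_i^{(1)})-\text{rot}(L_i^{(1)})=\text{tb}(L_i^{(2)})-\text{rot}(L_i^{(2)})$ is correct and pleasant, but the contact-geometric step you propose to close it has a genuine gap. The claim that the given smooth isotopy rel $L$ can be approximated by an isotopy through convex spheres with \emph{connected} dividing set is not a ``standard genericity argument''. Giroux genericity lets you make each sphere in the family convex after a $C^\infty$-small perturbation, but the dividing sets will in general change by bypass attachments along the way; nothing prevents the appearance of additional dividing circles. This is especially problematic here because the theorem is most interesting precisely when $\xi$ is overtwisted (that is the setting of Proposition~\ref{prop:nonsimple} and Theorem~\ref{teo:last}): if the isotopy sweeps $S_t$ across an overtwisted disk disjoint from $L$, the dividing set \emph{must} become disconnected at some moment, and no perturbation removes this. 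Consequently you cannot integrate to a contact isotopy fixing $L$, and your argument would in fact be proving the much stronger (and, in overtwisted structures, unavailable) statement that the Legendrian summands themselves are Legendrian isotopic.

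The paper's route is entirely different and avoids contact geometry at this step: it invokes the link version of Theorems~8.4 and 9.1 of \cite{LOSS}, which establish that the K\"unneth-type isomorphism $\widehat{HFL}(L^*)\cong\widehat{HFL}(L_1^*)\otimes_{\F}\widehat{HFL}(L_2^*)$ is \emph{natural} with respect to smooth isotopies of the splitting sphere rel $L$. That naturality is a purely Heegaard Floer statement (proved by tracking the connected-sum maps through Heegaard moves), and it immediately gives that the tensor factor carrying $\widehat{\mathfrak L}$ --- hence its Alexander grading --- is independent of the choice of sphere within its smooth isotopy class. Your formula then becomes a \emph{consequence} of the theorem rather than a route to it.
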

\begin{proof}
 The proof is a link version of the one of Theorems 8.4 and 9.1 in \cite{LOSS}. 
\end{proof}
There is a version of Theorem \ref{teo:refinement} for transverse links. 
\begin{cor}
 \label{cor:refinement}
 Suppose that $T$ is a transverse link in $(S^3,\xi)$. Assume also that one of its Legendrian approximations $L_T$ respects the hypothesis of 
 Theorem
 \ref{teo:refinement}. Then one has \[A_{S_1}(T,S^3,\xi)=A_{S_2}(T,S^3,\xi)\:,\] where the Alexander pair is now defined as 
 $A_S(T,S^3,\xi)=A_S(L_T,S^3,\xi)$.
\end{cor}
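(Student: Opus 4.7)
The plan is to deduce the corollary by carefully unpacking the definition of the transverse Alexander pair and then applying Theorem \ref{teo:refinement} to the Legendrian approximation $L_T$.

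First I would verify that the pair $A_S(T,S^3,\xi):=A_S(L_T,S^3,\xi)$ is really transverse-invariant, i.e.\ independent of the chosen Legendrian approximation $L_T$. By Epstein's Theorem \ref{teo:epstein}, any two Legendrian approximations of $T$ differ by a sequence of negative stabilizations. A negative stabilization on a component of $L_T$ can be realized inside an arbitrarily small Darboux chart disjoint from the splitting sphere $S$, so the convex sphere $S$ still intersects the stabilized link transversely in the same two points and expresses it as a connected sum whose summands are $L_1$ and $L_2$ with one of them replaced by its negative stabilization. Proposition \ref{prop:stabilizations} gives $\mathfrak L((L_i)^-,S^3,\xi_i)=\mathfrak L(L_i,S^3,\xi_i)$, so in particular $A\bigl(\widehat{\mathfrak L}((L_i)^-)\bigr)=A\bigl(\widehat{\mathfrak L}(L_i)\bigr)$, and therefore the pair $(s_1,s_2)$ is preserved. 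This makes $A_S(T,S^3,\xi)$ well-defined.

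Next I would check the hypotheses of Theorem \ref{teo:refinement} are met for $L_T$. The non-vanishing of $\widehat{\mathfrak L}(L_T,S^3,\xi)$ (equivalently $\widehat{\mathfrak T}(T,S^3,\xi)$) is needed in order for the Alexander grading of the invariant to be defined, and it is built into the very definition of the pair $A_S(T,S^3,\xi)$. The remaining hypothesis, namely the existence of a smooth isotopy of $S^3$ fixing $L_T$ and sending $S_1$ to $S_2$, is granted by the standing assumption of the corollary. Applying Theorem \ref{teo:refinement} to $L_T$ directly yields
\[
A_{S_1}(L_T,S^3,\xi)=A_{S_2}(L_T,S^3,\xi),
\]
and the conclusion follows by the definition $A_{S_i}(T,S^3,\xi)=A_{S_i}(L_T,S^3,\xi)$ for $i=1,2$.

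The only genuinely delicate point in the plan is the first one: one must be sure that the connected sum decomposition induced by $S$ is compatible with the transverse push-off/Legendrian approximation correspondence, so that negative stabilizations really act summand-wise. This is where I would be most careful, but it reduces to a standard local model, since both the push-off and the Legendrian approximation are constructed in an arbitrarily small neighborhood of the opposite link, far from the convex splitting sphere. Once this local observation is in place, the corollary is an immediate consequence of Theorem \ref{teo:refinement}.
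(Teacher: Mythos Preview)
Your proposal is correct and follows essentially the same approach as the paper: both reduce the well-definedness of $A_S(T,S^3,\xi)$ to Epstein's Theorem \ref{teo:epstein} and the invariance of $\mathfrak L$ under negative stabilizations (Proposition \ref{prop:stabilizations}), then invoke Theorem \ref{teo:refinement} on the chosen $L_T$. Your additional care about localizing the negative stabilization away from the splitting sphere $S$ is a point the paper leaves implicit, but it is a welcome clarification rather than a different argument.
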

\begin{proof}
 It is a consequence of the fact that Legendrian approximations of the same transverse link differ by negative stabilizations. Therefore,
 the Alexander gradings are the same because negative stabilizations do not change the invariant.
\end{proof}
The Alexander pair can be useful in distinguishing Legendrian and transverse links that are not isotopic.
\begin{prop}
 Suppose that $L_1$ and $L_2$ are smoothly isotopic Legendrian (transverse) links in $(S^3,\xi)$ which appear as follows. 
 Say $L_1\approx L_2$ is a $2$-component Legendrian (transverse) link, obtained from three Legendrian (transverse) 
        knots $K$, $H$ and $J$ with prime knot types, defined as 
        follows: take the connected
        sum of $K\#_S\:H$ with a (standard) positive Hopf link $\mathcal H_+$ and $J$, in the way that $K\#_S\:H$ is summed on the first component 
        of $\mathcal H_+$ and $J$ on the second one.        
 
 We have that if $A_S(L_1,S^3,\xi)\neq A_S(L_2,S^3,\xi)$ then $L_1$ is not Legendrian (transverse) isotopic to $L_2$.
\end{prop}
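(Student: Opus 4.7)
The strategy is to prove the contrapositive: assuming that $L_1$ and $L_2$ are Legendrian isotopic, deduce that $A_{S}(L_1,S^3,\xi)=A_{S}(L_2,S^3,\xi)$. Write $S^{(i)}$ for the convex splitting sphere attached to $L_i$ ($i=1,2$) that isolates the $K$-summand of the first component; the hypothesis that the Alexander pairs are defined implicitly means $\widehat{\mathfrak L}(L_i,S^3,\xi)\neq[0]$.

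First, by the Legendrian isotopy extension theorem, promote the ambient Legendrian isotopy carrying $L_1$ to $L_2$ to a contact isotopy $\{\Phi_t\}_{t\in[0,1]}$ of $(S^3,\xi)$ with $\Phi_1(L_1)=L_2$. Since contact isotopies preserve convexity and dividing sets, $\Phi_1(S^{(1)})$ is again a convex splitting sphere for $L_2$ with connected dividing set meeting $L_2$ transversely in two points, and it decomposes $L_2$ as a Legendrian connected sum of the same shape as the one determined by $S^{(2)}$. Naturality of $\widehat{\mathfrak L}$ under the contactomorphism $\Phi_1$, applied on each summand via the connected sum formula (Theorem \ref{teo:connected_sum}), yields
$$A_{S^{(1)}}(L_1,S^3,\xi)=A_{\Phi_1(S^{(1)})}(L_2,S^3,\xi).$$

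Next I compare $\Phi_1(S^{(1)})$ with $S^{(2)}$: both isolate the summand $K$ from the rest of $L_2$ and meet $L_2$ only along the first component, inside the composite knot $K\#H$. Because $K$ and $H$ are prime, Schubert's uniqueness theorem for prime decompositions produces a smooth isotopy carrying one sphere to the other inside the first component; a standard innermost-disk argument, using that the second component is the prime knot $J$ linked only via a Hopf clasp, promotes this to a smooth ambient isotopy of $S^3$ that fixes $L_2$ setwise. Theorem \ref{teo:refinement} then gives
$$A_{\Phi_1(S^{(1)})}(L_2,S^3,\xi)=A_{S^{(2)}}(L_2,S^3,\xi),$$
which combined with the previous identity settles the Legendrian case. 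For the transverse case, Theorem \ref{teo:epstein} reduces a transverse isotopy $T_1\simeq T_2$ to a Legendrian isotopy between suitable negative stabilizations of their approximations; Proposition \ref{prop:stabilizations} ensures that negative stabilizations leave $\widehat{\mathfrak L}$ and its bigrading unchanged, so Corollary \ref{cor:refinement} transports the Legendrian argument to the transverse setting.

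The main obstacle, and the step where primeness enters essentially, is the ambient-isotopy statement just invoked: while uniqueness of the splitting sphere for a composite prime knot $K\#H\subset S^3$ alone is classical, here one must verify that the Hopf-linked second component $J\#\mathcal H_+$ does not create additional isotopy classes of splitting spheres and that $\Phi_1(S^{(1)})$ can be arranged to avoid it. Everything else in the argument is a formal consequence of the naturality of $\widehat{\mathfrak L}$, the connected sum formula, and the refinement theorems already in place.
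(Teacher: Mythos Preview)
Your proposal is correct and follows essentially the same approach as the paper: argue by contrapositive, push the splitting sphere forward under the contact isotopy, and then invoke Theorem~\ref{teo:refinement} (and Corollary~\ref{cor:refinement} in the transverse case) once one knows the two splitting spheres are smoothly isotopic rel the link. The paper's own proof is a single sentence that simply asserts the smooth isotopy between $F(S)$ and $S$; you have supplied the missing justification by appealing to primeness of $K$, $H$, $J$ and Schubert's uniqueness of prime decompositions, and you correctly flag the passage from the knot statement to the link statement (avoiding the Hopf-linked second component) as the place where some care is needed.
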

\begin{proof}
 It follows from Theorem \ref{teo:refinement} and Corollary \ref{cor:refinement} and the fact that, if there is a Legendrian (transverse)
 isotopy $F$ between $L_1$ and $L_2$, the isotopy $F$ is such that $F(S)=S'$ and we can smoothly isotope $S'$ onto $S$.
\end{proof}

\subsection{Non-loose Legendrian links with loose sub-links}
It is easy to prove that we can always find loose Legendrian links in every overtwisted contact 3-manifold; in fact Legendrian links inside
a Darboux ball need to be loose. \newpage
On the other hand, it is not harder to show that the same holds for non-loose Legendrian links. In fact, 
it is a known \cite{Etnyre} 
that every overtwisted contact 3-manifold is obtained from some $-1$-surgeries 
and exactly one $+1$-surgery on Legendrian knots in $(S^3,\xi_{\text{st}})$; then we just 
take the Legendrian link given by $n$ parallel contact push-offs of $J^+$, the knot where we perform the $+1$-surgery.
Moreover, it is easy to check that this link also has non-loose components.
 
A more interesting result is
to show that, under some hypothesis on $(M,\xi)$, we can also find non-split Legendrian $n$-components
links such that $\mathfrak L(L,M,\xi)\neq[0]$, which means that they are non-loose from Proposition \ref{prop:loose}, and
all of their sub-links are now loose.
We start by constructing Legendrian knots with non trivial invariant in all the overtwisted structures on $S^3$. 

Consider the family of Legendrian knots $L(j)$, where $j\geq 1$, given by the surgery diagram in Figure \ref{Nodo1}.
\begin{figure}[ht]
  \centering
  \def\svgwidth{12cm}
  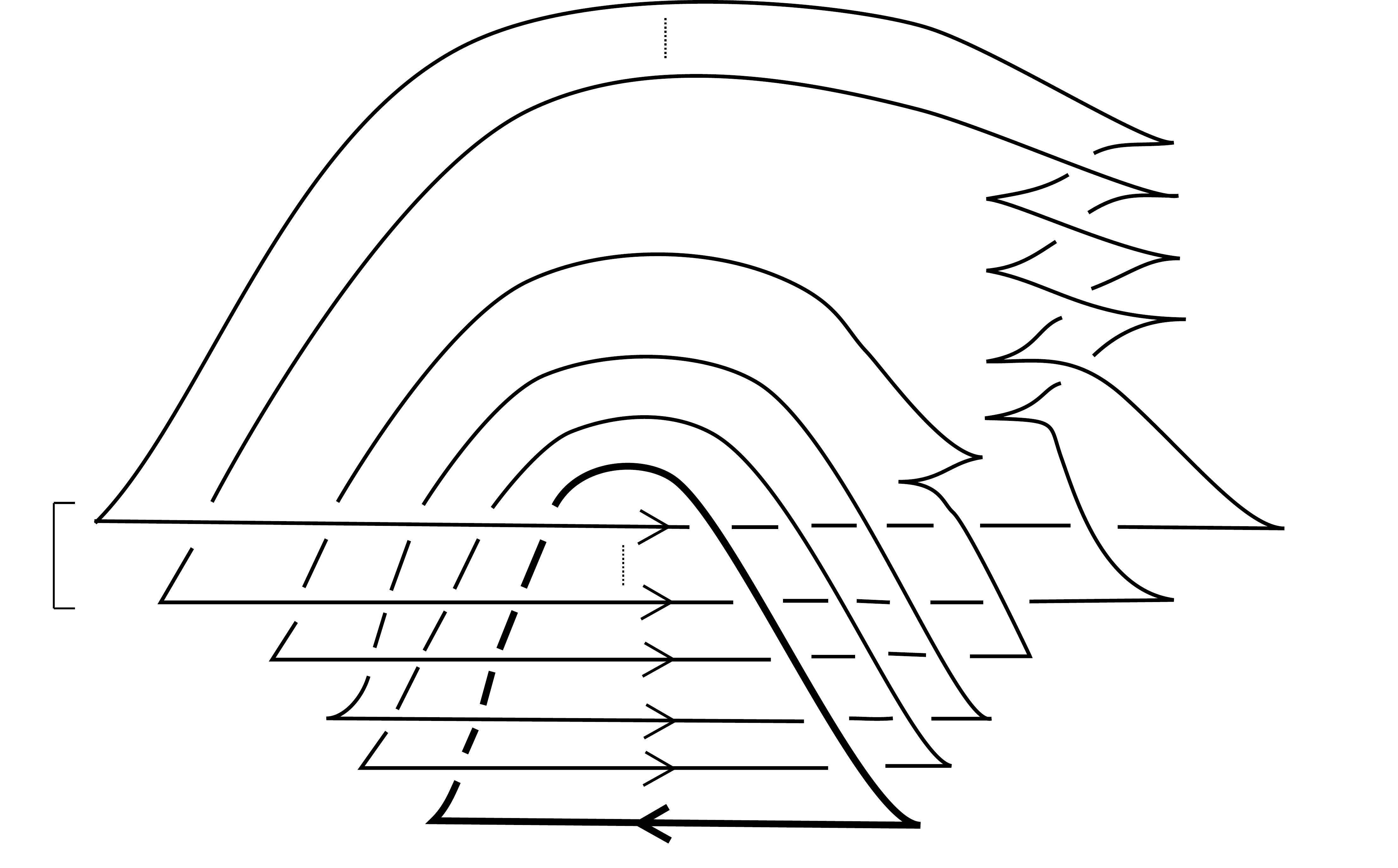   
  \caption{Contact surgery presentation for the Legendrian knot $L(j)$.}
  \label{Nodo1}
\end{figure}
Using Kirby calculus we easily see that $L(j)$ is a positive torus knot $T_{2,2j+1}$ in $S^3$. On the other hand, 
the Legendrian invariants of $L(j)$ and the contact structure where it lives are determined in \cite{LOSS} Chapter 6.
Namely, the knots in Figure \ref{Nodo1} are Legendrian knots 
in $\left(S^3,\xi_{1-2j}\right)$ and their invariants are:
\begin{itemize}
 \item $\text{tb}(L(j))=6+4(j-1)$;
 \item $\text{rot}(L(j))=7+6(j-1)$.
\end{itemize}
Furthermore, Proposition 6.2 in \cite{LOSS} tells us that $\widehat{\mathfrak L}(L(j),S^3,\xi_{1-2j})\neq[0]$ and then
$\mathfrak L(L(j),S^3,\xi_{1-2j})$ is a non-zero torsion class in $HFK^-(T_{2,-2j-1})$. 
Moreover, both have bigrading $(1,1-j)$.

Now we want to consider another family of Legendrian knots: the knots $L_{k,l}$, with $k,l\geq 0$, shown in Figure \ref{Nodo2}. 
\begin{figure}[ht]
  \centering
  \def\svgwidth{12cm}
  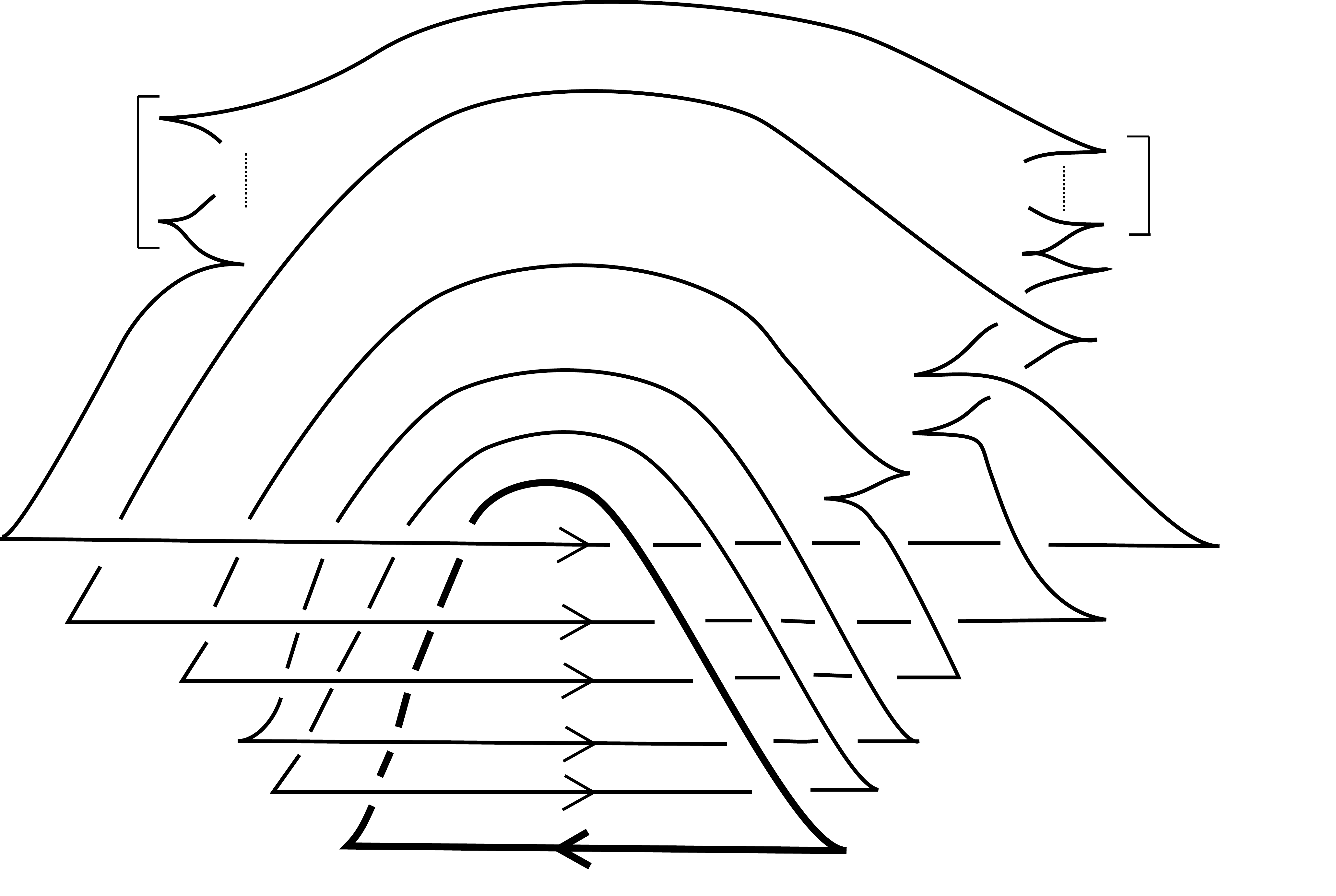   
  \caption{Contact surgery presentation for the Legendrian knot $L_{k,l}$.}
  \label{Nodo2}
\end{figure}
From \cite{LOSS} Chapter 6 we also know that $L_{k,l}$ is a negative torus knot $T_{2,-2k-2l-3}$ in 
$\left(S^3,\xi_{2l+2}\right)$ and 
its invariants are:
\begin{itemize}
 \item $\text{tb}(L_{k,l})=-6-4(k+l)$;
 \item $\text{rot}(L_{k,l})=-7-2k-6l$.
\end{itemize}
In this case, from Theorems 6.8, 6.9 and 6.10 in \cite{LOSS}, we have that the invariant 
$\widehat{\mathfrak L}$ of the Legendrian knots $L_{0,l}$, $L_{1,1}$ and $L_{1,2}$ is non-zero with bigrading $(-2k,1-k+l)$
in the homology group $\widehat{HFK}(T_{2,2k+2l+3})$.
Obviously, the fact that $\widehat{\mathfrak L}$ is non-zero
again implies that the same is true for the invariant $\mathfrak L$.

At this point, we define the Legendrian knots $K_i$, for every $i\in\Z$, in the following way:
$$K_i=\left\{\begin{aligned}
              &L(j)\#L(1)\:\:\:\:\:\:\:\text{if }i=-2j<0\\
 &L(j)\:\:\:\:\:\:\:\:\hspace{1cm}\text{if }i=1-2j<0\\
 &L_{0,j-1}\:\:\:\:\:\hspace{1cm}\text{if }i=2j>0\\
 &L_{0,j-1}\#L(1)\:\:\:\:\text{if }i=2j-1>0\\
 &L_{0,0}\#L(1)^2\:\:\:\:\:\:\text{if }i=0\:.
 \end{aligned}\right.$$
Then we have the following result; which is Theorem 7.2 in \cite{LOSS}.
\begin{prop}[Lisca, Ozsv\'ath, Stipsicz and Szab\'o]
 The Legendrian knot $K_i\hookrightarrow(S^3,\xi_i)$ is such that $\mathfrak L(K_i,S^3,\xi_i)\neq[0]$ and
 then it is non-loose for every $i\in\Z$.
\end{prop}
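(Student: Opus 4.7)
The plan is to invoke the connected sum formula of Theorem \ref{teo:connected_sum} to reduce the non-vanishing of $\mathfrak{L}(K_i, S^3, \xi_i)$ to the non-vanishing of the Legendrian invariants of the building blocks $L(j)$, $L(1)$ and $L_{0,j-1}$, all of which are known from \cite{LOSS}.

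First I would verify that, in each of the five defining cases, the prescribed Legendrian connected sums genuinely produce $(S^3, \xi_i)$. Since overtwisted contact structures on $S^3$ are classified by their $d_3$-invariant, this reduces to adding up the $d_3$'s of the pieces via the additivity $d_3(\xi_1 \# \xi_2) = d_3(\xi_1) + d_3(\xi_2)$ recorded in Section \ref{section:five}, together with the values $d_3(\xi_{1-2j})$ and $d_3(\xi_{2l+2})$ implicit in the surgery descriptions. The smooth type of $K_i$ as a connected sum of the corresponding torus knots is a direct Kirby calculus check.

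Next, Theorem \ref{teo:connected_sum} expresses $\mathfrak{L}(K_i, S^3, \xi_i)$ as an $\F[U]$-tensor product of the $\mathfrak{L}$-invariants of the pieces, which are taken from $\{L(j), L(1), L_{0,j-1}\}$. For each piece the hat refinement $\widehat{\mathfrak{L}}$ is non-zero by Proposition 6.2 and Theorems 6.8--6.10 of \cite{LOSS}, so $\mathfrak{L}$ itself is non-zero by the implication $\widehat{\mathfrak{L}} \neq [0] \Rightarrow \mathfrak{L} \neq [0]$ established in Section \ref{section:seven}.

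The main obstacle is showing that the resulting $\F[U]$-tensor product of non-zero classes is itself non-zero. Since $\widehat{c}(S^3, \xi_i) = [0]$ in every overtwisted structure, each factor is torsion by Corollary \ref{cor:vanish}, and in principle a tensor product of non-zero torsion $\F[U]$-modules can collapse. To sidestep this I would pass to the hat version, where the same holomorphic-triangle argument behind Theorem \ref{teo:connected_sum} yields a K\"unneth isomorphism over $\F$; a tensor product of non-zero vectors in $\F$-vector spaces is automatically non-zero, so $\widehat{\mathfrak{L}}(K_i, S^3, \xi_i) \neq [0]$. The implication from Section \ref{section:seven} then upgrades this to $\mathfrak{L}(K_i, S^3, \xi_i) \neq [0]$, and non-looseness of $K_i$ is immediate from Proposition \ref{prop:loose}.
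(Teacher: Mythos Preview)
Your proposal is correct and follows the same route as the paper, which simply records that the result ``follows easily from the previous computation and the connected sum formula.'' Your explicit attention to the torsion issue---passing to the hat version where the K\"unneth isomorphism is over $\F$, so that non-zero tensor non-zero is automatically non-zero, and then upgrading back to $\mathfrak L$---is a detail the paper leaves implicit (it is Theorem 7.2 in \cite{LOSS}) but is indeed the cleanest way to verify that the tensor product of the building-block invariants does not collapse.
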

\begin{proof}
 It follows easily from the previous computation and the connected sum formula.
\end{proof}
We can now go back to links. Let us take an overtwisted 3-manifold $(M,\xi)$ such that there exists another contact 
structure $\zeta$ on $M$ with $\widehat c(M,\zeta)\neq[0]$ and $\mathfrak t_{\xi}=\mathfrak t_{\zeta}$; in particular $\zeta$
is tight. Consider $\mathcal O$ the standard Legendrian unknot in $(M,\zeta)$. We have that 
$\mathfrak L(\mathcal O,M,\zeta)$ coincides with $
\textbf e_{-d_3(M,\zeta),0}\neq[0]$ and 
the invariant is non-torsion; this is because 
$HFK^-\left(\overline M,\mathcal O,\mathfrak t_{\zeta}\right)\cong\F[U]_{(-d_3(M,\zeta),0)}$ and Theorem
\ref{teo:ball}.

Now we connect sum two copies of $L(1)$ to the standard positive Hopf link $H_+$ in $(S^3,\xist)$ in the way that
one copy is summed on the first component of $H_+$ and the other one on the second component, see Figure \ref{Hopf}.
\begin{figure}[ht]
  \centering
  \def\svgwidth{7.5cm}
  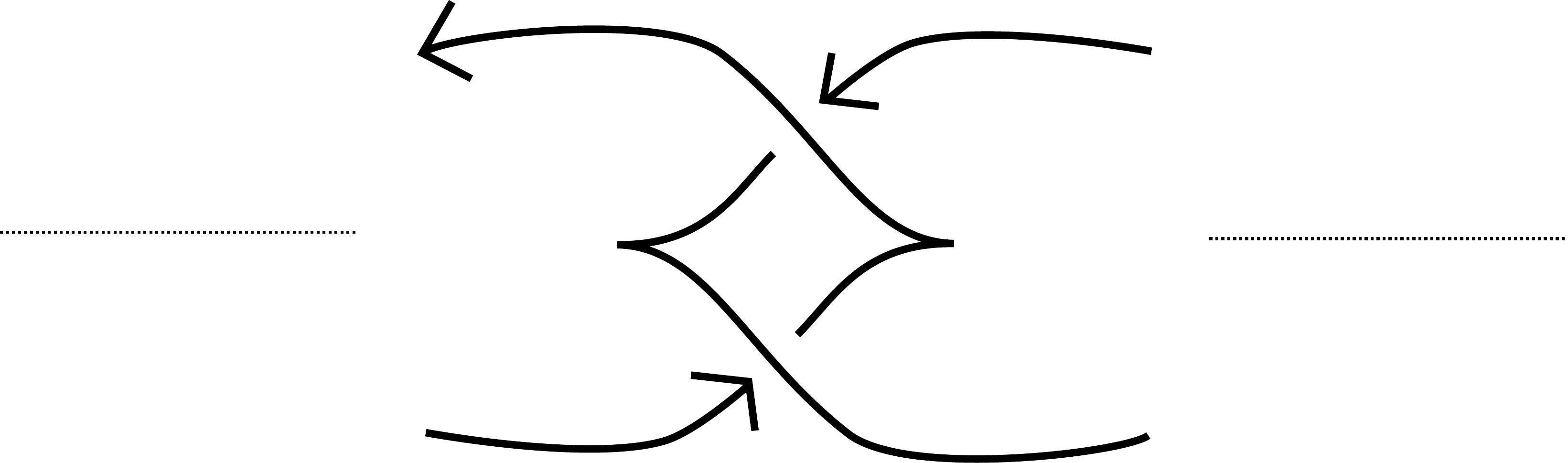   
  \caption{The two connected sums with the standard Legendrian positive Hopf link.}
  \label{Hopf}
\end{figure}
We repeat this procedure a total of 
$n-1$ times, where everytime the connected sum is performed in the way that the resulting link has components with form a single chain.

Let us denote these Legendrian links with $C_n$. Since $L(1)$ lives in $(S^3,\xi_{-1})$ we have that $C_n$ is an $n$-component Legendrian
link in $(S^3,\xi_{-n})$. 
The invariant $\mathfrak L(C_n,S^3,\xi_{-n})$ is
the tensor product of $n$ times $\mathfrak L(L(1),S^3,\xi_{-1})$ with $n-1$ times $\mathfrak L(H_+)$
the invariant in the standard $S^3$. An easy computation shows that $\mathfrak L(H_+)$ is the only non-torsion element
in the group $$cHFL^-(H_-)\cong\F[U]_{(0,0)}\oplus\F[U]_{(1,1)}\oplus\left(\dfrac{\F[U]}{U\cdot\F[U]}\right)_{(0,0)}$$
with bigrading $(1,1)$. This means that not only $\mathfrak L(H_+)$ is non-torsion, but it is also represented
by the top generator of one of the $\F[U]$-towers of $cHFL^-(H_-)$. 

Now we perform a connected sum between $C_n$ and the Legendrian unknot $\mathcal O$, introduced before.
Therefore, we can now see $C_n$ as a Legendrian link in $(M,\xi')$; where $\xi'$ is
an overtwisted structure such that $\mathfrak t_{\xi'}=\mathfrak t_{\zeta}=\mathfrak t_{\xi}$ and $d_3(M,\xi')=d_3(M,\zeta)-n$. 

If we perform another connected sum with the Legendrian knot $K_{d_3(M,\xi)-d_3(M,\zeta)+n}$ then we obtain the Legendrian link 
\[L=K_{d_3(M,\xi)-d_3(M,\zeta)+n}\#C_n\:;\] which is a link in $M$ equipped with a contact structure that has the same
Hopf invariant as $(M,\xi)$
and induces the same $\text{Spin}^c$ structure of $\xi$. By the Eliashberg's classification of overtwisted
structures \cite{Eliashberg2}, we conclude that $L$ is a Legendrian $n$-component link in $(M,\xi)$.
We can now prove Theorem
\ref{teo:nonloose}.
\begin{proof}[Proof of Theorem \ref{teo:nonloose}]
 We already saw that the link $L$ exists if the hypothesis of the theorem holds. So first we check that  
 $\mathfrak L(L,M,\xi)$ is non-zero. 
 In fact, the invariant is represented by the tensor product of a non-zero torsion element with $\mathfrak L(C_n,S^3,\xi_{-n})$.
 We are working with $\F[U]$-modules and we recall that 
 \[\F[U]\otimes_{\F[U]}\left(\dfrac{\F[U]}{U\cdot\F[U]}\right)\cong\dfrac{\F[U]}{U\cdot\F[U]}\] and, more precisely, in the 
 $\F[U]$-factor only the generator survives.
 
 Then, from what we said before, we have that $\mathfrak L(L,M,\xi)$ remains
 non-zero. In fact, the tensor products of the invariant $\mathfrak L$ of $K_n$ with $\mathfrak L(L(1),S^3,\xi_{-1})$ is non-zero because the hat versions 
 $\widehat{\mathcal L}$ are 
 non-zero in this case (\cite{LOSS}). Note that this conclusion would be false if instead we took the negative Hopf link. Now the invariant $\mathfrak L$
 does not lie in the top of an $\F[U]$-tower of the homology group and then it vanishes after the tensor product.
 
 This immediately implies that $\mathfrak T(T,M,\xi)$ is also non-zero and then the theorem holds for transverse links.
 Moreover, it is easy to see that the sub-links of $L$ are all loose; in fact, if  $L'$ is a sub-link of $L$ then there is at least one 
 component of $C_n$ which has been removed, say the $i$-th component. Since in an overtwisted 3-manifold we can always find an overtwisted disk disjoint 
 from a finite number of Darboux balls, this means that in the $i$-th $(S^3,\xi_{-1})$-summand we can find an 
 overtwisted disk that happens to lie in the complement of $L'$.
 
 It is only left to prove that $L$ is non-split. 
 From \cite{Colin} we know that the connected sum of two tight contact manifolds is still tight. 
 This implies that a non-loose Legendrian link is split if and only if its smooth link
 type is split. Hence, we just have to show that $L$ is non split as a smooth $n$-component link.
 But $L$ is a connected sum of torus links in a 3-ball inside $M$ and we know that $L$ is non-split as a link in $S^3$.
 Furthermore, if $L$ is split in $M$ then it would be split also in the 3-sphere and this is a contradiction.
\end{proof}

\subsection{Non-simple link types}
In the previous subsection we saw that, under some hypothesis, in an overtwisted 3-manifold $(M,\xi)$ we can find non-loose,
non-split Legendrian $n$-components links $L_n$. Consider the links $L_n'$ obtained as the connected sum 
of $L_n$ with the standard Legendrian unknot in $(S^3,\xi_0)$, where $\xi_0$ is the overtwisted $S^3$ with zero Hopf 
invariant. 

Since $(M,\xi)$ is already overtwisted, 
we have that $(M,\xi)\#(S^3,\xi_0)$ is contact isotopic to $(M,\xi)$.
This means that $L_n'$ is also a non-split 
Legendrian link in $(M,\xi)$, which is smoothly isotopic to $L_n$ for every $n\geq 1$, but
unlike $L_n$ it is clearly loose.
Each component of $L_n'$ has the same classical invariants of a component of $L_n$. Moreover, if $n\geq 2$ then
there is an overtwisted 
disk in their complement. From a result of Dymara in \cite{Dymara} the components of $L_n'$ are Legendrian
isotopic to the ones of $L_n$.
Hence, we have the follwing corollary.
\begin{cor}
 The link type of $L_n$ and $L_n'$ in $M$, which is denoted with $\mathcal L$, is both Legendrian and transverse non-simple.
\end{cor}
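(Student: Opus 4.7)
My plan is to distinguish $L_n$ from $L_n'$ by exhibiting a Legendrian invariant that separates them, while checking that everything else required by the definition of non-simple is preserved. First I would verify that $L_n$ and $L_n'$ represent the same smooth link type: this is immediate since $L_n' = L_n \# \mathcal{O}$ with $\mathcal{O}$ the standard Legendrian unknot in $(S^3,\xi_0)$, and since $(M,\xi)\#(S^3,\xi_0)$ is contactomorphic to $(M,\xi)$ the result $L_n'$ sits in $(M,\xi)$ and is smoothly isotopic to $L_n$ (the connected sum with an unknot being smoothly trivial). Next I would verify the classical invariants are preserved: applying the connected sum formulas recalled in Subsection 5.3 with $\mathrm{tb}(\mathcal{O})=-1$ and $\mathrm{rot}(\mathcal{O})=0$ yields $\mathrm{tb}(L_n')=\mathrm{tb}(L_n)+(-1)+1=\mathrm{tb}(L_n)$ and $\mathrm{rot}(L_n')=\mathrm{rot}(L_n)$.

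The componentwise Legendrian isotopy is the step that genuinely needs work, and it is the main obstacle. Since $n\geq 2$, after forming $L_n'$ every component has at least one other component together with the $(S^3,\xi_0)$ summand in its complement, so its complement is overtwisted. Thus each component of $L_n'$ is loose. The corresponding component of $L_n$ is also loose (the construction of $L_n$ in the previous subsection produces each component inside one of the overtwisted summands of the prior connected sum decomposition). Since both components are loose knots with identical classical invariants, Dymara's classification in \cite{Dymara} furnishes a Legendrian isotopy between each component of $L_n$ and the corresponding component of $L_n'$.

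The key separation is now the invariant $\mathfrak{L}$: by Theorem \ref{teo:nonloose} we have $\mathfrak{L}(L_n,M,\xi)\neq[0]$, whereas $L_n'$ is loose by construction (it contains the $(S^3,\xi_0)$ overtwisted disk in its complement), so Proposition \ref{prop:loose} gives $\mathfrak{L}(L_n',M,\xi)=[0]$. Since $\mathfrak{L}$ is a Legendrian link invariant (Theorem \ref{teo:main}), $L_n$ and $L_n'$ cannot be Legendrian isotopic, establishing that $\mathcal{L}$ is Legendrian non-simple.

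Finally, for the transverse version I would pass to the transverse push-offs $T_{L_n}$ and $T_{L_n'}$. By the definition of $\mathfrak{T}$ in Section 6 we have $\mathfrak{T}(T_{L_n},M,\xi)=\mathfrak{L}(L_n,M,\xi)\neq[0]$, while the push-off of a loose Legendrian link is itself loose, so $\mathfrak{T}(T_{L_n'},M,\xi)=[0]$, giving the transverse non-simplicity. The classical transverse invariant $\mathrm{sl}$ agrees on both (since it is determined by $\mathrm{tb}-\mathrm{rot}$ of a Legendrian approximation), and componentwise transverse isotopy follows from componentwise Legendrian isotopy by taking push-offs, completing the plan.
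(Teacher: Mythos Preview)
Your proposal is correct and follows essentially the same approach as the paper: the paper's argument (contained in the paragraphs immediately preceding the corollary rather than in a formal proof environment) distinguishes $L_n$ from $L_n'$ via the loose/non-loose dichotomy, checks the classical invariants agree through the connected sum formula with the standard unknot, and invokes Dymara's result for the componentwise Legendrian isotopy. You spell out the same steps with a bit more detail, in particular making explicit that the separation is witnessed by $\mathfrak L$ through Theorem~\ref{teo:nonloose} and Proposition~\ref{prop:loose}; this is a harmless elaboration rather than a different route.
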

On the other hand, we can also find non-simple link types where the two
Legendrian and transverse representatives are non-loose.
\begin{prop}
 \label{prop:nonsimple}
 Let us consider
 the links $L_1=(L_{0,2}\#L_{1,2})\#H_+\#L(1)$ and $L_2=(L_{1,1}\#L_{0,3})\#H_+\#L(1)$
 in the contact manifold
 $(S^3,\xi_{11})$; where in $L_i$ the knots on the left
 are summed on the first component of $H_+$ and $L(1)$ on the second one.
 Then $L_1$ and $L_2$ are two non-loose, non-split Legendrian $2$-component links, with the same classical invariants and
 Legendrian isotopic components, but that are not Legendrian isotopic.
 
 In the same way, the transverse push offs of $L_1$ and $L_2$ are two non-loose, non-split transverse $2$-component links, 
 with the same classical invariants and
 transversely isotopic components, but that are not transversely isotopic.
\end{prop}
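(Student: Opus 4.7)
The plan is to verify each required property in turn and to distinguish $L_1$ from $L_2$ via the Alexander pair invariant introduced in Theorem \ref{teo:refinement}. First I would check that both links sit in $(S^3,\xi_{11})$: the knots $L_{0,2},L_{1,2}$ are Legendrian in $(S^3,\xi_6)$, $L_{1,1}$ in $(S^3,\xi_4)$, $L_{0,3}$ in $(S^3,\xi_8)$, $L(1)$ in $(S^3,\xi_{-1})$, and $H_+$ lives in a Darboux ball, so additivity of the Hopf invariant under connected sum gives label $6+6-1=11$ for $L_1$ and $4+8-1=11$ for $L_2$. Plugging $\text{tb}(L_{0,2})=\text{tb}(L_{1,1})=-14$, $\text{tb}(L_{1,2})=\text{tb}(L_{0,3})=-18$, $\text{tb}(H_+)=0$, $\text{tb}(L(1))=6$ and the analogous rotation numbers into the connected-sum formulas of the paper gives $\text{tb}(L_i)=-23$ and $\text{rot}(L_i)=-33$ for both $i=1,2$, and smoothly both links equal $T_{2,-7}\#T_{2,-9}\#H_+\#T_{2,3}$.

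Non-looseness and non-splitness I would dispatch by replaying the argument of Theorem \ref{teo:nonloose}. Iterating Theorem \ref{teo:connected_sum} expresses $\mathfrak L(L_i)$, and its $U=0$ reduction $\widehat{\mathfrak L}(L_i)$, as tensor products of the invariants of the four factors. Each factor invariant is non-zero by the computations of \cite{LOSS} Chapter 6, and the Hopf-link factor $\mathfrak L(H_+)$ is the top generator of the free $\F[U]$-tower $\F[U]_{(1,1)}$ inside $cHFL^-(H_-)$, so the same torsion-versus-tower bookkeeping used in the proof of Theorem \ref{teo:nonloose} shows that neither tensor product collapses. In particular $\widehat{\mathfrak L}(L_i)\neq[0]$, which is what legitimises the Alexander pair. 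Non-splitness then follows as in Theorem \ref{teo:nonloose} from Colin's tight-connected-sum theorem applied to the non-loose $L_i$ together with the smooth non-splitness. For the components I argue as in the paragraph preceding the proposition: the complement of the first component of $L_i$ contains the $\xi_{-1}$ summand, and the complement of the second component contains the $\xi_6\#\xi_6$ (resp.\ $\xi_4\#\xi_8$) summand; in either case the complement of a component contains an overtwisted summand and hence an overtwisted disk, so every component is loose. The first components are smoothly $T_{2,-7}\#T_{2,-9}$ with matching $\text{tb}=-31$ and $\text{rot}=-40$, while the second components are smoothly $T_{2,3}$ with matching $\text{tb}=6$ and $\text{rot}=7$, so Dymara's classification of loose Legendrian knots provides the required Legendrian isotopies of corresponding components.

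The decisive step is ruling out a Legendrian isotopy between $L_1$ and $L_2$, which is exactly the setting of the proposition at the end of Subsection 7.1. Take the convex splitting sphere $S$ coming from the prime decomposition $L_{0,2}\#_S L_{1,2}$ of the knot summand of $L_1$ (and the analogous $L_{1,1}\#_S L_{0,3}$ for $L_2$), meeting each $L_i$ transversely in two points on the connected-sum arc. Using Theorem \ref{teo:ball} on each prime factor and additivity of the Alexander grading under connected sum, one computes
\[
A_S(L_1)=\bigl(A(\widehat{\mathfrak L}(L_{0,2})),\,A(\widehat{\mathfrak L}(L_{1,2}\#H_+\#L(1)))\bigr)=(3,\,2+1+0)=(3,3),
\]
\[
A_S(L_2)=\bigl(A(\widehat{\mathfrak L}(L_{1,1})),\,A(\widehat{\mathfrak L}(L_{0,3}\#H_+\#L(1)))\bigr)=(1,\,4+1+0)=(1,5).
\]
Since $(3,3)\neq(1,5)$, the proposition at the end of Subsection 7.1 forbids any Legendrian isotopy from $L_1$ to $L_2$. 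The transverse statement follows identically after passing to transverse push-offs and invoking Corollary \ref{cor:refinement} in place of Theorem \ref{teo:refinement}, using that a Legendrian approximation of a transverse push-off may be taken to be the original Legendrian link. The delicate input is really the non-vanishing of $\widehat{\mathfrak L}(L_i)$ used in the second paragraph, where one must verify that the hat Legendrian invariant survives after tensoring with the Hopf-link contribution; this is the same issue handled carefully in the proof of Theorem \ref{teo:nonloose}.
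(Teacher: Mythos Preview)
Your proof is correct and follows the same route as the paper's: invoke the non-vanishing computations from \cite{LOSS} to ensure $\widehat{\mathfrak L}(L_i)\neq[0]$, distinguish $L_1$ from $L_2$ via the Alexander pair of Theorem \ref{teo:refinement} (through the proposition at the end of Subsection 7.1), appeal to Dymara for the component isotopies, and repeat verbatim for the transverse push-offs. The paper's own proof is a terse two-line pointer to exactly these ingredients; you have simply written out the numerics and the non-looseness/non-splitness verifications explicitly.
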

\begin{proof}
 We apply Theorem \ref{teo:refinement}. The Legendrian invariant of $L_1$ and $L_2$ is computed in \cite{LOSS} and it is
 non-zero; moreover, the Alexander pairs of $\widehat{\mathfrak L}(L_1,S^3,\xi_{11})$ and
 $\widehat{\mathfrak L}(L_2,S^3,\xi_{11})$ are different. The fact that the components are Legendrian isotopic follows
 from Dymara's result \cite{Dymara}. 
 The same argument proves the theorem in the transverse setting.
\end{proof}
Using the same construction, the refined version of $\widehat{\mathfrak L}$ and
$\widehat{\mathfrak T}$ can be applied to find such examples for 
links with more than two components in every contact manifold as in Theorem \ref{teo:nonloose}.
\begin{proof}[Proof of Theorem \ref{teo:last}]
 Let us take a standard Legendrian (transverse) 
 positive Hopf link $H_+$ in $(S^3,\xi_{\text{st}})$. On the first component of $H_+$ 
 we perform a connected sum with the knot $L_{0,2}\#L_{1,2}$ in one case and $L_{1,1}\#L_{0,3}$ in the other.
 While, on the second component of $H_+$, we sum a non-split Legendrian (transverse) $n$-component link in the 
 overtwisted manifold $(M,\xi')$, where $d_3(M,\xi')=d_3(M,\xi)-12$, with non-zero invariants;
 those links exist as we know from
 Theorem \ref{teo:nonloose}. We conclude by applying the same argument of the proof of Proposition \ref{prop:nonsimple}.
\end{proof}

\end{document}